\newcommand{\hocofib}{\operatorname{hocofib}}
\newcommand{\Spec}{\operatorname{Spec}}
\newcommand{\isomto}{{\stackrel{\sim}{\;\longrightarrow\;}}}
\newcommand{\isomt}{{\stackrel{{\scriptscriptstyle{\sim}}}{\;\rightarrow\;}}}
\newcommand{\sma}{{\scriptstyle{\wedge}\,}}
\newcommand{\op}[1]{\operatorname{#1}}
\renewcommand{\hom}{\operatorname{Hom}}
\newcommand{\cplx}{{\mathbb C}}
\newcommand{\Z}{{\mathbb Z}}
\newcommand{\D}{{\mathrm D}}
\newcommand{\Proj}{{\operatorname{Proj}}}
\newcommand{\aone}{{\mathbb A}^1}
\newcommand{\pone}{{\mathbb P}^1}
\newcommand{\ga}{{{\mathbb G}_{a}}}
\newcommand{\gm}[1]{{{\mathbb G}_{m}^{#1}}}
\newcommand{\ho}[2][]{\mathrm{H}_{#1}({#2})}
\newcommand{\Nis}{{\operatorname{Nis}}}
\newcommand{\Zar}{\operatorname{Zar}} 
\renewcommand{\deg}{\operatorname{deg}}
\newcommand{\aff}{\operatorname{aff}}
\newcommand{\Sm}{\mathrm{Sm}}
\newcommand{\Spc}{\mathrm{Spc}}
\newcommand{\Th}{{\operatorname{Th}}}
\renewcommand{\setminus}{\smallsetminus}
\newcommand{\Addresses}{{
\bigskip
\footnotesize

A.~Asok, Department of Mathematics, University of Southern California, 3620 S.~Vermont Ave., Los Angeles, CA 90089-2532, United States; E-mail address: asok@usc.edu
\medskip

A.~Dubouloz, IMB UMR5584, CNRS, Universit{\'e} de Bourgogne Franche-Comt{\'e}, Dijon, France; E-mail address: Adrien.Dubouloz@u-bourgogne.fr
\medskip

P.~A.~{\O}stv{\ae}r, 
Department of Mathematics F. Enriques, University of Milan, Via Cesare Saldini 50, 20133 Milan, Italy; E-mail address: paul.oestvaer@unimi.it \&
Department of Mathematics, University of Oslo, P.~O.~Box 1053 Blindern, 0316 Oslo, Norway; E-mail address: paularne@math.uio.no
}}
\newcounter{intro}
\theoremstyle{plain}
\newtheorem{thm}{Theorem}[subsection]
\newtheorem{lem}[thm]{Lemma}
\newtheorem{cor}[thm]{Corollary}
\newtheorem{prop}[thm]{Proposition}
\newtheorem*{claim*}{Claim} 
\newtheorem{question}[thm]{Question}
\newtheorem*{thm*}{Theorem}
\newtheorem*{problem*}{Problem}
\newtheorem{thmintro}{Theorem}
\newtheorem{questionintro}[thmintro]{Question}
\theoremstyle{definition}
\newtheorem{defn}[thm]{Definition}
\newtheorem{construction}[thm]{Construction}
\theoremstyle{remark}
\newtheorem{rem}[thm]{Remark}
\newtheorem{remintro}[thmintro]{Remark}
\newtheorem{ex}[thm]{Example}
\newtheorem{entry}[thm]{}
\numberwithin{equation}{subsection}
\begin{document}
\pagestyle{fancy}
\renewcommand{\sectionmark}[1]{\markright{\thesection\ #1}}
\fancyhead{}
\fancyhead[LO,R]{\bfseries\footnotesize\thepage}
\fancyhead[LE]{\bfseries\footnotesize\rightmark}
\fancyhead[RO]{\bfseries\footnotesize\rightmark}
\chead[]{}
\cfoot[]{}
\setlength{\headheight}{1cm}

\author{
Aravind Asok\thanks{Aravind Asok was partially supported by National Science Foundation Awards DMS-1802060 and DMS-2101898} 
\and 
Adrien Dubouloz\thanks{Adrien Dubouloz was partially supported by Project ISITE-BFC ANR-15-IDEX-0008 "Motivic Invariants of Algebraic Varieties" and  ANR Project PRC ``HQDIAG" ANR-21-CE40-0015-02}
\and 
Paul Arne {\O}stv{\ae}r\thanks{Paul Arne {\O}stv{\ae}r was partially supported by RCN Project no. 312472 ``Equations in Motivic Homotopy Theory"}
}

\title{{\bf Geometric models for algebraic suspensions}}
\date{}


\maketitle

\begin{abstract}
We analyze the question of which motivic homotopy types admit smooth schemes as representatives.  We show that given a pointed smooth affine scheme $X$ and an embedding into affine space, the affine deformation space of the embedding gives a model for the ${\mathbb P}^1$ suspension of $X$; we also analyze a host of variations on this observation.
Our approach yields many examples of ${\mathbb A}^1$-$(n-1)$-connected smooth affine $2n$-folds and strictly quasi-affine ${\mathbb A}^1$-contractible smooth schemes. 
\end{abstract}

\section*{Introduction}
A very basic question in topology is: which homotopy types admit (smooth) manifold representatives?  When a given homotopy type admits a manifold representative, one can then ask: can the homeomorphism (resp. diffeomorphism) types be enumerated?  Broadly speaking, this note concerns algebro-geometric variants of such questions: we replace manifolds by smooth algebraic varieties (frequently affine) and the ordinary homotopy category by the Morel--Voevodsky $\aone$-homotopy category \cite{MV}.  To guide the discussion, consider the following:
\begin{questionintro}
\begin{enumerate}[noitemsep,topsep=1pt]
\item Which motivic homotopy types admit representatives that are smooth schemes?  
\item If a motivic homotopy type admits at least one smooth representative, can one distinguish the isomorphism classes of distinct representatives?  
\end{enumerate}
\end{questionintro}

An implicit impediment to making the above questions precise is the issue of specifying motivic homotopy types.  The simplest motivic homotopy type, namely that of a point, admits many non-isomorphic representatives. We refer the reader to \cite{AsokOstvaer} for a survey of results in this direction.  For this reason, instead of trying to specify motivic homotopy types, we will focus on describing how such types can change.  Here is a sample result.

\begin{thmintro}[See Theorem~\ref{thm:main}]
	\label{thmintro:onedegeneratefiber}
	Assume $B$ is a scheme, $\mathfrak{X}$ is a $B$-scheme, and $\pi: \mathfrak{X} \to {\mathbb A}^n_B$ is a smooth morphism admitting a section $s$.  If, setting $U := \pi^{-1}({\mathbb A}^n \setminus 0)$, the morphism $\pi|_U: \mathfrak{X}|_U \to U$ is an $\aone$-weak equivalence, then there is an induced (pointed) $\aone$-weak equivalence
	\[
	{\pone}^{\sma n} \wedge (\mathfrak{X}_0,s(0)) \sim (\mathfrak{X},s(0)).
	\]
\end{thmintro}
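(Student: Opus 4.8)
\emph{Approach.} The plan is to deduce the equivalence from the Morel--Voevodsky homotopy purity theorem together with the naturality of the associated Thom space identifications, applied to the section $s$. First I would record the geometric input. Since $\pi$ is smooth and $\mathfrak{X}_0=\pi^{-1}(0)$, the inclusion $\mathfrak{X}_0\hookrightarrow\mathfrak{X}$ is a closed immersion of smooth $B$-schemes whose normal bundle $N$ is the restriction to $\mathfrak{X}_0$ of $\pi^{*}N_{0/\A^n}$; as $N_{0/\A^n}$ is trivial of rank $n$, the bundle $N\cong\mathcal{O}_{\mathfrak{X}_0}^{\oplus n}$ is canonically trivial. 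Writing $U=\pi^{-1}(\A^n\setminus 0)$, purity then supplies weak equivalences $\mathfrak{X}/U\weq\Th(N)\weq{\pone}^{\sma n}\wedge(\mathfrak{X}_0)_+$ and $\A^n/(\A^n\setminus 0)\weq\Th(N_{0/\A^n})\weq{\pone}^{\sma n}$.

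\emph{A map of cofiber sequences.} Next I would exploit that $s$ is a morphism of smooth closed pairs $(\A^n,0)\to(\mathfrak{X},\mathfrak{X}_0)$, carrying $0$ to $s(0)$ and $\A^n\setminus 0$ into $U$. By naturality of purity this induces a map from the cofiber sequence $(\A^n\setminus 0)_+\to(\A^n)_+\to{\pone}^{\sma n}$ to the cofiber sequence $U_+\to\mathfrak{X}_+\to\Th(N)$. The left-hand vertical map $(\A^n\setminus 0)_+\to U_+$ is a weak equivalence: $s$ restricts to a section of the restriction of $\pi$ over $\A^n\setminus 0$, and a section of a weak equivalence is a weak equivalence. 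The right-hand vertical map ${\pone}^{\sma n}\to\Th(N)$ is the Thom space of the induced map of normal bundles; because $s$ is a section, its differential at $0$ identifies $N_{0/\A^n}$ with the fiber $N|_{s(0)}$, so this map is the inclusion $\Th(N|_{s(0)})\hookrightarrow\Th(N)$ of the Thom space of the fiber over the basepoint $s(0)$.

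\emph{Passing to vertical cofibers.} Finally I would take cofibers of the three vertical maps. Since the cofiber of a map of homotopy cofiber sequences is again a homotopy cofiber sequence (equivalently, as the left vertical map is an equivalence, the right-hand square is homotopy cocartesian), the three cofibers assemble into a cofiber sequence. The cofiber of the left column is contractible; the cofiber of the middle column is $\mathfrak{X}_+/(s(\A^n))_+=\mathfrak{X}/s(\A^n)\weq(\mathfrak{X},s(0))$, because $s(\A^n)\cong\A^n$ is $\aone$-contractible and $s$ is a closed immersion; and the cofiber of the right column is the reduced Thom space $\Th(N)/\Th(N|_{s(0)})$, which by triviality of $N$ equals ${\pone}^{\sma n}\wedge(\mathfrak{X}_0,s(0))$. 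The resulting cofiber sequence
\[
\ast\;\longrightarrow\;(\mathfrak{X},s(0))\;\longrightarrow\;{\pone}^{\sma n}\wedge(\mathfrak{X}_0,s(0))
\]
then yields the asserted equivalence. The main obstacle I anticipate lies in the second step: verifying the naturality of purity with respect to the morphism of closed pairs given by $s$, and checking that $s$ is transverse to $\mathfrak{X}_0$ at $s(0)$ so that the induced map of normal bundles is an isomorphism onto the fiber and the right vertical map is genuinely the fiber Thom space inclusion; the remaining manipulations with cofiber sequences are then formal.
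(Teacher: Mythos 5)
Your proof is correct and follows essentially the same route as the paper's: Theorem~\ref{thm:main} is deduced there from Theorem~\ref{thm:mainmultiple}, whose proof rests on exactly your ingredients --- homotopy purity for the transversally related pairs $(\A^n,0)$ and $(\mathfrak{X},\mathfrak{X}_0)$ (Theorem~\ref{thm:purity}), the observation that $s$ is an $\aone$-weak equivalence over $\A^n \setminus 0$ by two-out-of-three, contractibility of $\A^n$ via left properness (Lemma~\ref{lem:leftproper}), the compatible canonical trivializations of the normal bundles, and the identification of the cofiber of the basepoint inclusion (Proposition~\ref{prop:basepointedsmash}). The only difference is bookkeeping: you take vertical cofibers of the map of purity cofiber sequences induced by $s$, whereas the paper computes $\hocofib(U \to \mathfrak{X})$ in two ways via the octahedral axiom (Proposition~\ref{prop:octahedralaxiom}) and then splits off the Thom space of the fiber, which is the same total-cofiber computation of the same square.
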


\begin{remintro}
	\label{remintro:genericaoneweakequivalence}
	Theorem~\ref{thmintro:onedegeneratefiber} is deduced from the more general result Theorem~\ref{thm:mainmultiple}.  Rather than stating this more general result here, let us explain the general principle at work.  A smooth morphism $\pi: \mathfrak{X} \to Y$ will be called a {\em generic $\aone$-weak equivalence} if there exists a dense open subscheme $U \subset Y$ such that $\pi|_U$ is an $\aone$-weak equivalence.  In that setting, the motivic homotopy type of $\mathfrak{X}$ can be obtained by gluing (as in \cite[Theorem 2.21]{MV}) the motivic homotopy type of $U$ with the motivic homotopy type of $\mathfrak{X}|_{Y \setminus U}$.  Under suitable additional hypotheses, we can control the $\aone$-homotopy type of the gluing.
\end{remintro}

Our next order of business is to create an ample supply of generic $\aone$-weak equivalences to which our techniques apply.  One source of such morphisms is provided by the deformation to the normal cone construction a la Fulton-MacPherson: the deformation space of an embedding of a smooth affine variety $k$-variety in ${\mathbb A}^n_k$ will yield a family to which the preceding result may be applied.  The following result is a straightforward consequence of the results above, combined with the Jouanolou--Thomason homotopy lemma.

\begin{thmintro}[See Corollary~\ref{cor:modelsofiteratedsuspension}]
	\label{thmintro:main}
	Suppose $B$ is a quasi-compact and quasi-separated (qcqs) base scheme.  Assume $(X,x)$ is a finitely presented pointed smooth $B$-scheme.
	\begin{enumerate}[noitemsep,topsep=1pt]
		\item If the structure map $X \to B$ is furthermore affine, then for any integer $i \geq 0$, the motivic space $S^i \sma \gm{\sma i} \sma X$ has the $\aone$-homotopy type of a pointed $B$-scheme that is smooth and affine over $B$.
		\item If $B$ is affine and regular (e.g., the spectrum of $\Z$ or a field), then the same statement holds for any $B$-scheme $X$ that is smooth and has affine diagonal over $B$ (thus, any smooth separated $B$-scheme satisfies the hypotheses).
	\end{enumerate}
\end{thmintro}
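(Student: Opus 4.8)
The plan is to realize a single $\pone$-suspension by one application of Theorem~\ref{thm:main} and then to obtain the iterated suspension by induction. The starting point is the standard identification $\pone \weq S^1 \wedge \Gm$ in $\hop{B}$, which gives $\pone^{\wedge i} \weq S^i \wedge \gm{\wedge i}$; thus it suffices to show that for every pointed smooth affine $B$-scheme $(X,x)$ the smash $\pone \wedge (X,x)$ is represented by a pointed smooth affine $B$-scheme, and then to iterate (the case $i=0$ being $X$ itself).

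To produce a model for $\pone \wedge (X,x)$, I would choose a closed $B$-immersion $\iota\colon X \hookrightarrow V$ of $X$ into the total space $V$ of a vector bundle over $B$ that is an $\aone$-weak equivalence onto $B$ --- when $B$ is affine one may take $V = \A^N_B$ --- which exists because $X \to B$ is affine and finitely presented. Let $I$ be the ideal sheaf of $X$ in $V$ and let $\mathfrak X$ be the \emph{affine deformation to the normal cone}, i.e.\ the relative spectrum over $V$ of the extended Rees algebra $\bigoplus_n I^n t^{-n}$ (with $I^n = \mathcal O_V$ for $n \le 0$), together with the morphism $\pi\colon \mathfrak X \to \A^1_B$ induced by $t$. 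Since $\iota$ is a regular immersion of $B$-smooth schemes, this family is flat over $\A^1_B$, finitely presented, with fibre $V$ over $\A^1 \setminus 0$ and fibre the normal bundle $N_{X/V}$ over $0$; both fibre types are smooth over $B$, so $\pi$ is smooth, and hence $\mathfrak X$ is smooth and affine over $B$. Over $U := \pi^{-1}(\A^1 \setminus 0) = V \times \Gm$ the map $\pi|_U$ is the projection to $\Gm$, an $\aone$-weak equivalence because $V \to B$ is one; and the special fibre $\mathfrak X_0 = N_{X/V}$ is a vector bundle over $X$, so $\mathfrak X_0 \weq X$. Finally, $\mathrm{ev}_x$ annihilates $I$, hence annihilates every strictly negative graded piece and so induces a $B[t]$-algebra map $\bigoplus_n I^n t^{-n} \to B[t]$, i.e.\ a section $s$ of $\pi$ with $s(0)$ the zero of $N_{X/V}$ over $x$, matching the basepoint under $\mathfrak X_0 \weq X$.

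With these verifications, Theorem~\ref{thm:main} in the case $n=1$ applies to $\pi\colon \mathfrak X \to \A^1_B$ and yields a pointed $\aone$-weak equivalence $\pone \wedge (X,x) \weq (\mathfrak X, s(0))$ with $\mathfrak X$ smooth and affine over $B$. As $\mathfrak X$ is again pointed, smooth, affine, and finitely presented over $B$, I can feed it back into the same construction; iterating $i$ times and invoking $\pone^{\wedge i} \weq S^i \wedge \gm{\wedge i}$ proves part (1). For part (2), let $X$ be smooth with affine diagonal over the regular affine base $B$. The Jouanolou--Thomason homotopy lemma then provides a torsor $p\colon \tilde X \to X$ under a vector bundle with $\tilde X$ smooth and affine over $B$; since $p$ is an $\aone$-weak equivalence, a lift $\tilde x$ of $x$ gives $(\tilde X, \tilde x) \weq (X,x)$, and applying part (1) to $(\tilde X, \tilde x)$ and smashing the resulting equivalence with $S^i \wedge \gm{\wedge i}$ gives the conclusion.

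The main obstacle I expect is the smoothness of the total space $\mathfrak X$ of the affine deformation space: it relies on $\iota$ being a regular immersion and on the extended Rees algebra being flat over $\A^1_B$ with the normal bundle as special fibre, so that smoothness of $\pi$ can be read off from smoothness of its fibres. A second, more bookkeeping-level, point is the existence of the closed embedding $\iota$ into a $B$-smooth affine scheme that is $\aone$-equivalent to $B$ over a general qcqs base, which is where the finite-presentation hypothesis (together with a Noetherian approximation argument) is used; over an affine base this is immediate, and it is the only place the passage from affine to qcqs bases enters. The remaining inputs --- the splitting $\pone \weq S^1 \wedge \Gm$, the $\aone$-contractibility of vector-bundle total spaces relative to their base, and the verification that $\pi|_U$ is an $\aone$-weak equivalence --- are routine.
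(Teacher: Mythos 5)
Your proposal is correct and follows essentially the same route as the paper: embed $X$ into a vector bundle over $B$ (this is Proposition~\ref{prop:embedintoaffines}, including the Noetherian approximation step you flag), realize $\Sigma_{\pone}X$ as the deformation space of the embedding via Theorem~\ref{thm:main} (this is exactly Theorem~\ref{thm:deformationspacesmodelsuspension}, with your extended Rees algebra being the affine blow-up description of $\D(X,Y)$, whose flatness/smoothness the paper delegates to Proposition~\ref{prop:smoothnessofparameterizeddeformationspaces} and Corollary~\ref{cor:deformationtothenormalcone}), iterate, and reduce part (2) to part (1) by the Jouanolou--Thomason lemma (Proposition~\ref{prop:jouanolouthomason}), including the paper's same device of trivializing the torsor over the affine base to lift the base-point. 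The points you single out as the expected obstacles are precisely the ones the paper addresses by citation to \cite{MRR} and the Stacks Project, so nothing is missing.
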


\begin{remintro}
Theorem~\ref{thmintro:main} can be viewed as a refinement and extension of the technique conceived in \cite{AsokDoranFasel}.  In that paper, an inductive argument was used to show that the smooth affine quadric hypersurface $\op{Q}_{2n}$ defined by the equation $\sum_{i=1}^n x_iy_i = z(1-z)$ has the motivic homotopy type of the motivic sphere $S^i \sma \gm{\sma i}$ \cite[Theorem 2]{AsokDoranFasel}.  In retrospect, this smooth affine quadric is itself an iterated deformation space.  
\end{remintro}

Construction~\ref{construction:parameterizeddeformations} gives a generalization of the deformation to the normal cone construction.  Loosely speaking, this construction lets us specify a degeneration locus $Z$ in a suitable parameter space $W$ and then construct a morphism whose degeneration locus is precisely $Z$ with control over the fibers over $Z$.  Proposition~\ref{prop:smoothnessofparameterizeddeformationspaces} explains the relevant technical hypotheses that we will use to construct generic $\aone$-weak equivalences.  One reason for making Construction~\ref{construction:parameterizeddeformations} in such generality is that we obtain a plethora of smooth scheme models of a given $\aone$-homotopy type.  This flexibility allows us to encompass a large class of examples in affine algebraic geometry.  Indeed, the problem of distinguishing isomorphism classes of smooth affine schemes within a given $\aone$-homotopy type contains many classical problems in affine algebraic geometry and the examples we construct. 

The remainder of this note is then devoted to constructing several classes of examples.  First, we generalize the results of \cite{AsokDoranFasel} in two ways: a) we construct new examples of highly $\aone$-connected hypersurfaces by producing and analyzing a variation on Danielewski's construction; for example, we construct many examples of $\aone$-$(n-1)$-connected smooth affine $2n$-folds, b) we build new examples of strictly quasi-affine $\aone$-contractible smooth schemes.  The problem of classifying $\aone$-$(n-1)$-connected smooth affine $2n$-folds is reminiscent of (a non-compact version of) that studied for manifolds by C.T.C.\ Wall in \cite{Wall}.  Finally, we give examples to show that varieties that are not $\aone$-contractible can become so after $\pone$-suspension; these results provide evidence supporting \cite[Conjecture 14]{AsokOstvaer}.

\begin{thmintro}[See Theorem~\ref{thm:aonecontractiblehypersurfaces} and Remark~\ref{rem:topologicalcontractibility}]
	If $X$ is a topologically $\Z$-acyclic smooth complex surface that is not isomorphic to ${\mathbb A}^2_{\cplx}$,  then $X$ is not $\aone$-connected, but for every integer $N \geq 2$, the space ${\pone}^{\sma N} \wedge X$ has the $\aone$-homotopy type of a smooth affine $\cplx$-scheme and is $\aone$-contractible. 
\end{thmintro}

\subsubsection*{Acknowledgements}
We want to thank Marc Hoyois for helpful discussions about the proof of homotopy purity and Tom Bachmann for helpful discussions about conservativity.  {\O}stv{\ae}r gratefully acknowledges the hospitality and support of the Centre for Advanced Study at the Norwegian Academy of Science and Letters in Oslo, Norway, which funded and hosted the research project ``Motivic Geometry" during the 2020/21 academic year.

\section{Preliminaries and Notation}
Fix a base scheme $B$; if we do not explicitly mention otherwise in the body of the text, the reader should not assume any additional finiteness hypotheses are imposed upon $B$.  Next, write $\Sm_B$ for the category of schemes that are smooth over $B$.  For the convenience of referencing, we follow \cite[\href{https://stacks.math.columbia.edu/tag/01V5}{Tag 01V5}]{stacks-project} for our definition of smoothness as opposed to \cite[Definition 17.3.2]{EGAIV.4}; note that, either way, objects of $\Sm_B$ are locally of finite presentation over $B$.

\subsection{Simplicial presheaves and homotopy categories} 
Write $\Spc_B$ for the category of simplicial presheaves on $\Sm_B$.  We will typically use calligraphic letters to denote spaces.  Objects of $\Spc_B$ will be referred to as {\em $B$-spaces} or simply {\em spaces} if $B$ is clear from context.  The category $\Spc_B$ has $B$ as a final object, and we write $\Spc_{B,\ast}$ for the category of {\em pointed} spaces, i.e., spaces $\mathscr{X}$ provided with a morphism $B \to \mathscr{X}$ splitting the structure map.  The forgetful functor $\Spc_{B,\ast} \to \Spc_B$ has a left adjoint $(-)_+$ of ``adding a disjoint base-point"; this functor sends $\mathscr{X}$ to $\mathscr{X}_+ := \mathscr{X} \sqcup B$.  Categorical constructions like $\wedge$ and $\vee$ will all occur in $\Spc_{B,\ast}$, but sometimes we will explicitly indicate the base scheme $B$ (e.g., $\wedge_B$) for emphasis.

We view $\Sm_B$ as a site by equipping it with the Nisnevich topology (in general, this is the topology generated by the Nisnevich $cd$-squares).  We write $\ho{B}$ for the Morel--Voevodsky $\aone$-homotopy category: this category is constructed when $B$ is Noetherian of finite Krull dimension in \cite{MV}, but we follow \cite[Appendix C p. 3649]{HoyoisQuadraticRefinement} for the general case.  While $\ho{B}$ is described using the language of $\infty$-categories in \cite{HoyoisQuadraticRefinement}, one may also use model structures as described in \cite[\S 3.1, 4.1 and 5.1]{AHW}, \cite[\S 2]{DRO}.  In brief, we realize $\ho{B}$ as the left Bousfield localization of the injective $\Nis$-local model structure on $\Spc_B$ with respect to the $\aone$-weak equivalences.  Isomorphisms in $\ho{B}$ will be called $\aone$-weak equivalences, and we write
\[
[\mathscr{X},\mathscr{Y}]_{\aone} := \hom_{\ho{B}}(\mathscr{X},\mathscr{Y}),
\]
by analogy with homotopy classes of maps in topology.

\begin{remintro}
When $B$ is not Noetherian of finite Krull dimension, we caution the reader that the $\aone$-homotopy category as defined here differs from the naive extension of Morel--Voevodsky's definition; see \cite[Remark 3.1.4]{AHW} for more details.
\end{remintro}

\subsection{Cofiber sequences}
\label{s:homotopicalpreliminaries} 
We now collect for the reader's convenience some well-known useful facts about cofiber sequences in pointed model categories.
\label{ss:cofibersequences}
First, the motivic model structure we use is {\em left proper}, that is, pushouts of $\aone$-weak equivalences along cofibrations are again $\aone$-weak equivalences.  Left properness has the following useful consequence that we use repeatedly.

\begin{lem}
\label{lem:leftproper}
If $f: (\mathscr{X},x) \to (\mathscr{Y},y)$ is a morphism of pointed spaces and $(\mathscr{X},x)$ is $\aone$-contractible, then the canonical map
\[
(\mathscr{Y},y) \to \operatorname{hocofib}(f)
\]
is an $\aone$-weak equivalence.
\end{lem}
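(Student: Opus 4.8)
The plan is to compute $\operatorname{hocofib}(f)$ as the strict cofiber of a cofibrant model of $f$ and then to produce the desired $\aone$-weak equivalence directly from left properness. Recall that in the injective $\Nis$-local model structure every object is cofibrant. First I would functorially factor $f$ as
\[
(\mathscr{X},x) \xrightarrow{\,i\,} (\mathscr{Z},z) \xrightarrow{\,p\,} (\mathscr{Y},y),
\]
with $i$ a cofibration and $p$ an $\aone$-weak equivalence (e.g.\ via the reduced mapping cylinder). Since $i$ is a cofibration between cofibrant objects, the strict pushout $\operatorname{cofib}(i) = \mathscr{Z} \cup_{\mathscr{X}} \ast$ computes $\operatorname{hocofib}(f)$, and writing $q\colon (\mathscr{Z},z) \to \operatorname{cofib}(i)$ for the structure map of that pushout, the canonical map of the statement is represented in $\ho{B}$ by the zig-zag $\mathscr{Y} \xleftarrow{\,p\,} \mathscr{Z} \xrightarrow{\,q\,} \operatorname{cofib}(i)$.

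The key step is to apply left properness to the pushout square
\[
\begin{CD}
(\mathscr{X},x) @>{i}>> (\mathscr{Z},z) \\
@VVV @VV{q}V \\
\ast @>>> \operatorname{cofib}(i).
\end{CD}
\]
The left-hand vertical map $(\mathscr{X},x) \to \ast$ is an $\aone$-weak equivalence precisely because $(\mathscr{X},x)$ is $\aone$-contractible, and $i$ is a cofibration by construction; hence, as $\operatorname{cofib}(i)$ is the pushout of the weak equivalence $(\mathscr{X},x) \to \ast$ along the cofibration $i$, left properness forces its pushout $q$ to be an $\aone$-weak equivalence.

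It then remains to conclude: both $p$ and $q$ in the zig-zag above are $\aone$-weak equivalences, so the canonical map $(\mathscr{Y},y) \to \operatorname{hocofib}(f) = \operatorname{cofib}(i)$ they represent, namely $q \circ p^{-1}$ in $\ho{B}$, is an isomorphism in $\ho{B}$, i.e.\ an $\aone$-weak equivalence. I expect the only point requiring care---rather than any genuine difficulty---to be the verification that $q\circ p^{-1}$ is indeed the canonical map of the cofiber sequence and that this is independent of the chosen factorization of $f$; this is the standard well-definedness of the homotopy cofiber and its universal map from the target, which one checks using the gluing lemma for homotopy pushouts. The conceptual content of the lemma is thus entirely contained in the single application of left properness to the contraction $(\mathscr{X},x)\to\ast$.
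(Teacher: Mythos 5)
Your proof is correct and is essentially the paper's own argument: the paper states Lemma~\ref{lem:leftproper} without a written proof, presenting it as an immediate consequence of left properness, and your factorization through the reduced mapping cylinder followed by the single application of left properness to the pushout of $(\mathscr{X},x)\to\ast$ along the cofibration $i$ is exactly the standard argument being invoked. The bookkeeping you flag at the end (that the zig-zag $\mathscr{Y}\xleftarrow{p}\mathscr{Z}\xrightarrow{q}\operatorname{cofib}(i)$ represents the canonical map, independently of the factorization) is indeed routine; with the mapping-cylinder model one even gets the canonical map as the honest composite of the inclusion $\mathscr{Y}\hookrightarrow\mathscr{Z}$ with $q$, so two-out-of-three closes the argument without passing to $p^{-1}$.
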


Next, we recall a general fact about the cofiber of a composite map (sometimes called the ``octahedral axiom" as it gives rise to the octahedral axiom in triangulated categories).

\begin{prop}
\label{prop:octahedralaxiom}
If $f_1: \mathscr{X}_2 \to \mathscr{X}_1$ and $f_0: \mathscr{X}_1 \to \mathscr{X}_0$ are pointed morphisms of spaces, then there is a cofiber sequence of the form:
\[
\operatorname{hocofib}(f_1) \longrightarrow \operatorname{hocofib}(f_0f_1) \longrightarrow \operatorname{hocofib}(f_0)
\]
\end{prop}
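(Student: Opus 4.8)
The plan is to reduce everything to the pasting (composition) law for homotopy pushout squares, using the standard description of the homotopy cofiber as a homotopy pushout against the basepoint: for any pointed map $f\colon \mathscr{A}\to\mathscr{B}$ one has $\hocofib(f)\simeq \mathscr{B}\cup^{h}_{\mathscr{A}}\ast$, the homotopy pushout of $\ast\leftarrow\mathscr{A}\xrightarrow{f}\mathscr{B}$. Since the model structure is left proper (as recalled above), after factoring $f_1$ and $f_0$ as cofibrations followed by weak equivalences we may compute all of these homotopy pushouts as strict pushouts along cofibrations, and left properness guarantees that the result is homotopy invariant. I would carry out such replacements silently at the start so that every square appearing below is a genuine homotopy pushout.

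First I would assemble the horizontal rectangle
\[
\begin{array}{ccccc}
\mathscr{X}_2 & \xrightarrow{f_1} & \mathscr{X}_1 & \xrightarrow{f_0} & \mathscr{X}_0\\
\downarrow & & \downarrow & & \downarrow\\
\ast & \longrightarrow & \hocofib(f_1) & \longrightarrow & Q,
\end{array}
\]
in which the left square is the defining homotopy pushout for $\hocofib(f_1)$ and the right square is declared to be the homotopy pushout $Q:=\mathscr{X}_0\cup^{h}_{\mathscr{X}_1}\hocofib(f_1)$. The pasting law then identifies the outer rectangle as the homotopy pushout of $\ast\leftarrow\mathscr{X}_2\xrightarrow{f_0f_1}\mathscr{X}_0$, so that $Q\simeq\hocofib(f_0f_1)$ and the right-hand square becomes a homotopy pushout square with corners $\mathscr{X}_1,\mathscr{X}_0,\hocofib(f_1),\hocofib(f_0f_1)$.

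Next I would stack onto this square the defining homotopy pushout square for the homotopy cofiber of its bottom edge, obtaining
\[
\begin{array}{ccc}
\mathscr{X}_1 & \xrightarrow{f_0} & \mathscr{X}_0\\
\downarrow & & \downarrow\\
\hocofib(f_1) & \longrightarrow & \hocofib(f_0f_1)\\
\downarrow & & \downarrow\\
\ast & \longrightarrow & C,
\end{array}
\]
with $C:=\hocofib\big(\hocofib(f_1)\to\hocofib(f_0f_1)\big)$. Applying the pasting law a second time, now vertically, the outer rectangle is again a homotopy pushout; since its left edge is the composite $\mathscr{X}_1\to\hocofib(f_1)\to\ast$, i.e. the collapse map, this outer rectangle computes $\hocofib(f_0)$. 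Hence $C\simeq\hocofib(f_0)$, which is exactly the asserted cofiber sequence $\hocofib(f_1)\to\hocofib(f_0f_1)\to\hocofib(f_0)$.

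The main point requiring care — rather than a genuine obstacle — is the homotopical bookkeeping: one must ensure at the outset that the chosen factorizations make each of the elementary squares an honest homotopy pushout, and then invoke left properness so that the two appeals to the pasting law are legitimate. I would also record explicitly that the first map in the sequence is induced by the identity on $\mathscr{X}_2$ together with $f_0$, and the second by $f_0$ together with the collapse, so that the sequence produced is the canonical one and not merely an abstract equivalence of the two outer corners.
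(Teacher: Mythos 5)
Your argument is correct, but it is genuinely different in character from what the paper does: the paper gives no argument at all, simply citing Hovey's \emph{Model Categories}, Proposition 6.3.6, which proves the statement in an arbitrary pointed model category via the machinery of cofiber sequences equipped with coactions (cylinder objects, Verdier-type axioms). Your route instead proves the proposition directly by two applications of the pasting law for homotopy pushouts, using $\hocofib(f)\simeq \mathscr{B}\cup^{h}_{\mathscr{A}}\ast$: the horizontal pasting identifies $Q\simeq\hocofib(f_0f_1)$ and exhibits the square on $\mathscr{X}_1,\mathscr{X}_0,\hocofib(f_1),\hocofib(f_0f_1)$ as a homotopy pushout, and the vertical pasting then identifies the cofiber of $\hocofib(f_1)\to\hocofib(f_0f_1)$ with $\hocofib(f_0)$, since the composite left edge is the collapse $\mathscr{X}_1\to\ast$. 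Both steps are sound, and your closing remark that the maps produced are the canonical ones (not merely an abstract equivalence of corners) is exactly the point one must record for the sequence to be usable later, e.g.\ in the proof of Theorem~\ref{thm:mainmultiple}. What each approach buys: the citation is shorter and inherits Hovey's full generality (including the compatibility with the suspension coaction implicit in his notion of cofiber sequence), whereas your argument is self-contained and makes the geometry of the two pushout squares visible, which is arguably more illuminating in the present context where the homotopy cofiber is always computed as a pushout along a monomorphism. One minor bookkeeping correction: left properness is not really what licenses computing these homotopy pushouts as strict pushouts --- in the injective local model structure every object is cofibrant and cofibrations are monomorphisms, so a strict pushout along a cofibration already models the homotopy pushout; left properness is instead what guarantees invariance of such pushouts under weak equivalences of the input diagrams. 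Your silent factorizations are therefore mostly unnecessary here, but invoking them does no harm.
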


\begin{proof}
This is shown in \cite[Proposition 6.3.6]{Hovey}.
\end{proof}

The following fact has to do with smash products and base-points.  Assume $(\mathscr{X},x)$ and $(\mathscr{Y},y)$ are pointed spaces.  In this case, we may form the smash product $\mathscr{X} \wedge \mathscr{Y}$ in $\Spc_{B,\ast}$.  The base-point $x$ determines a canonical pointed map $x: S^0 \to \mathscr{X}_+$ sending the non-base-point of $S^0$ to the point $x$.  Smashing this morphism with $id_{{\mathscr Y}}$ then defines a morphism
\[
\mathscr{Y} = \mathscr{Y} \sma S^0 \stackrel{id \sma x}{\longrightarrow} \mathscr{Y}\sma  \mathscr{X}_+.
\]
This morphism is split by the morphism $\mathscr{X}_+ \to S^0$ that collapses $\mathscr{X}$ to the non-base-point.  It follows that the displayed morphism is a cofibration in the injective local model structure since it is a monomorphism.

\begin{prop}
\label{prop:basepointedsmash}
For any pointed spaces $(\mathscr{X},x)$ and $(\mathscr{Y},y)$, there is a canonical identification
\[
\operatorname{hocofib}(\mathscr{Y} \stackrel{id \sma x}{\longrightarrow} \mathscr{X}_+ \sma \mathscr{Y}) \cong \mathscr{X} \sma \mathscr{Y}.
\]
\end{prop}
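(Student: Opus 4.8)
The plan is to reduce the statement to a single strict (sectionwise) quotient computation, exploiting the fact—recorded in the paragraph immediately preceding the statement—that $id \sma x$ is a monomorphism, hence a cofibration in the injective local model structure. Since the homotopy cofiber of a cofibration is modeled by the naive cofiber, the entire content becomes an identification of ordinary colimits, and the only thing requiring care is the passage between strict and homotopy cofibers.

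First I would record the fundamental cofiber sequence associated to adding a disjoint base-point. For a pointed space $(\mathscr{X},x)$ the map $x\colon S^0 \to \mathscr{X}_+$ (sending the non-base-point to $x$) is a split monomorphism, hence a cofibration, and its strict cofiber is canonically $\mathscr{X}$: collapsing the image of $S^0$ identifies the disjoint base-point of $\mathscr{X}_+$ with $x$. Thus $S^0 \xrightarrow{x} \mathscr{X}_+ \to \mathscr{X}$ is a cofiber sequence, and because the first map is a cofibration it is a \emph{homotopy} cofiber sequence. I would then smash this sequence with $\mathscr{Y}$. Because $\Spc_{B,\ast}$ is closed symmetric monoidal under $\sma$, the functor $- \sma \mathscr{Y}$ is a left adjoint and so preserves the pushout square defining the cofiber. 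Smashing $x$ with $id_{\mathscr{Y}}$ yields, after the unit isomorphism $S^0 \sma \mathscr{Y} \cong \mathscr{Y}$ and the symmetry of $\sma$, precisely the map $\mathscr{Y} \xrightarrow{id \sma x} \mathscr{X}_+ \sma \mathscr{Y}$ of the statement. Passing to cofibers then produces the canonical identification $\operatorname{hocofib}(id \sma x) \cong \mathscr{X} \sma \mathscr{Y}$.

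The main point to handle carefully—really the only subtlety—is distinguishing the strict cofiber from the homotopy cofiber at the second stage: applying $- \sma \mathscr{Y}$ preserves the ordinary pushout on the nose, but this ordinary pushout computes the \emph{homotopy} cofiber only because the smashed map is again a cofibration. This is exactly the monomorphism observation made just before the statement, so I would invoke it explicitly to license the identification. As a self-contained cross-check confirming canonicity, one may instead argue by hand using $\mathscr{X}_+ \sma \mathscr{Y} \cong (\mathscr{X} \times \mathscr{Y})/(\mathscr{X} \times \{y\})$, under which $id \sma x$ becomes the inclusion of $\{x\} \times \mathscr{Y}$; the resulting further quotient is $(\mathscr{X} \times \mathscr{Y})/(\mathscr{X} \vee \mathscr{Y}) = \mathscr{X} \sma \mathscr{Y}$, and naturality of all the maps involved shows the isomorphism is canonical.
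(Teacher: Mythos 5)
Your proposal is correct and takes essentially the same approach as the paper: both proofs rest on the observation (from the paragraph preceding the statement) that $id \sma x$ is a split monomorphism, hence a cofibration in the injective local model structure, so the homotopy cofiber coincides with the strict cofiber, which is then identified with $\mathscr{X} \sma \mathscr{Y}$ by unwinding the definition of the smash product. Your device of smashing the cofiber sequence $S^0 \xrightarrow{x} \mathscr{X}_+ \to \mathscr{X}$ with $\mathscr{Y}$ (using that $- \sma \mathscr{Y}$ preserves pushouts) is just a structured version of that unwinding, and your closing quotient computation $(\mathscr{X} \times \mathscr{Y})/(\mathscr{X} \vee \mathscr{Y}) \cong \mathscr{X} \sma \mathscr{Y}$ is precisely the ``exercise'' the paper delegates to the cited reference.
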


\begin{proof}
Since the map in question is a cofibration, the homotopy cofiber coincides with the cofiber.  The identification of the cofiber is an exercise in unwinding the definition of the smash product (for more details, see \cite[Proposition 2.2.4]{AsokDoranFasel}).
\end{proof}

\section{Deformation to the normal cone revisited} 
\label{s:parametrized-def}
In this section, we study a variation on the ``deformation to the normal cone" construction, which appears in many places; see, e.g., \cite[\S IV.5]{FultonLang} or \cite[\S 2]{VerdierRR}.  Our eventual goal will be to use this kind of construction to produce generic $\aone$-weak equivalences $\pi: \mathfrak{X} \to W$ with good control over the closed subscheme $Z \subset W$ over which $\pi$ fails to be an $\aone$-weak equivalence.  

\subsection{Parameterized deformation spaces: construction}
\label{ss:constructionparamdefspaces}
Henceforth, we assume $B$ is an arbitrary scheme.  The deformation to the normal cone construction can be realized in terms of affine blow-ups.  The theory of affine blow-ups has been worked out in great generality in \cite[\S 2]{MRR} (see also \cite{KaZaVeryTransitive, DModAff} for special cases), but we will recall what we need here. 

\begin{entry}[Affine blow-ups]
	Assume $Z \subset D \subset X$ is a triple consisting of a scheme $X$, a closed subscheme $Z$ defined by a quasi-coherent sheaf of ideals $\mathscr{I}$ of ideals and a locally principal subscheme $D$ defined by a quasi-coherent sheaf of ideals $\mathscr{J}$ contained in  $\mathscr{I}$.  The affine blow-up $\op{Bl}_{Z}^D(X)$ is defined as follows: as usual, we write $\op{Bl}_{\mathscr{I}} \mathscr{O}_X$ for the (graded) blow-up algebra $\bigoplus_{n \geq 0} \mathscr{I}^n$ \cite[\href{https://stacks.math.columbia.edu/tag/052Q}{Tag 052Q}]{stacks-project}.  If we view the local generators of $\mathscr{J}$ as living in degree one, then $\op{Bl}_{Z}^D(X)$ is the complement of the variety $V_+(\mathscr{J})$ defined by the homogeneous ideal $\mathscr{J}$ in $\Proj \op{Bl}_\mathscr{I} \mathscr{O}_X$ \cite[Definition 2.1, Lemma 2.3]{MRR}.  With these preliminaries in mind, we introduce the following: 
\end{entry}

\begin{construction}
	\label{construction:parameterizeddeformations}
	Fix a base scheme $B$.  We assume given:
	\begin{enumerate}[noitemsep,topsep=1pt]
		\item A smooth $B$-scheme $W$ (``the parameter space"); 
		\item A locally finitely presented flat $B$-scheme $Z$ equipped with a closed immersion of $B$-schemes $i: Z \to W$ (``the degeneration locus");
		\item A smooth $B$-scheme $Y$; 
		\item A $B$-scheme $X$ together with a surjective smooth morphism $\psi: X \to Z$, and a closed immersion $f: X \to Y \times_B W$ such that the diagram
		\[
		\xymatrix{
			X \ar[r]^-{f}\ar[d]^-{\psi} & Y \times_B W \ar[d]^-{p_2}\\
			Z \ar[r]^-i & W
		}
		\]
		commutes; and
		\item A locally principal divisor $D$ on $Y\times_B W$ that contains $Y \times_B Z$.
	\end{enumerate}
	The {\em parameterized deformation space} 
	\[
	\pi: \op{D}(X,i,f) \longrightarrow W
	\]
	is the affine blow-up $\op{Bl}_{X}^D(Y \times_B W)$.
\end{construction}

The next example justifies the terminology {\em parameterized deformation space} for the output $\pi$ of Construction~\ref{construction:parameterizeddeformations}.

\begin{ex} 
	\label{ex:deformationspace}
	Take $W = {\mathbb A}^1_B$, fix a closed immersion of finitely presented smooth $B$-schemes $f: X \to Y$, let $Z = B$, take $i: Z \to \aone_B$ the zero section, and let $D$ be the Cartier divisor $Y\times_B Z\subset Y\times_B {\mathbb A}^1_B$.  In that case, if $f': X \to Y \times \aone_B$ is the base-change of $f$ along the projection map $Y \times \aone_B\to Y$, then the structure morphism $X \to B$ fits into a commutative square as in Construction~\ref{construction:parameterizeddeformations}, and $\op{D}(X,i,f)$ is the usual deformation space $\op{D}(X,Y)$ of the closed immersion $f$.
\end{ex}

\subsection{Parameterized deformation spaces: structural properties}
\label{ss:propertiesparamdefspaces}
This subsection aims to control the structure of parameterized deformation spaces. In particular, we want to exhibit hypotheses under which the morphism $\pi: \op{D}(X,i,f) \to W$ is smooth, and we would also like to control the structure of the exceptional locus.  Indeed, if we take $Y$ to be an $\aone$-contractible smooth $B$-scheme, then as long as $\pi$ is smooth, it is automatically a generic $\aone$-weak equivalence in the sense of the introduction.

In the setup of Construction~\ref{construction:parameterizeddeformations}, if $Z \subset W$ is an effective Cartier divisor, then the fiber product $Y \times_B Z \hookrightarrow Y \times_B W$ is an effective Cartier divisor as well; we will take $D = W \times_B Y$ in the sequel and we write $\mathscr{J}$ for the ideal sheaf of $Y \times_B Z$ in $Y \times_B W$.  In that case, the closed immersion $\psi: X \to Z$ necessarily factors through a closed immersion $f': X \hookrightarrow Y \times_B Z$.  We write $\mathscr{I}$ for the ideal sheaf of $X$ in $Y \times_B W$ and $\mathscr{J}_X$ for the restriction of $\mathscr{J}$ to $X$.  In that case, we write $\mathscr{C}_{f'} := \mathscr{I}/(\mathscr{I}^2 + \mathscr{J})$ for the conormal sheaf of $f'$ and $\mathscr{N}_{f'}$ for its dual, i.e., the normal sheaf of the embedding.

We want to speak about regular immersions, but doing so in the context when our base schemes have no finiteness hypotheses imposed upon them requires more work.  We refer the reader to \cite[\href{https://stacks.math.columbia.edu/tag/067M}{Tag 067M}]{stacks-project} and \cite[\href{https://stacks.math.columbia.edu/tag/0638}{Tag 0638}]{stacks-project} for discussions about regular immersion in this generality.  We freely use the fact that regular immersions always have locally free (co)normal sheaves (combine \cite[\href{https://stacks.math.columbia.edu/tag/067P}{Tag 067P}]{stacks-project} and \cite[\href{https://stacks.math.columbia.edu/tag/063M}{Tag 063M}]{stacks-project}).

Since smooth $B$-schemes are locally finitely presented by assumption, and since closed immersions are quasi-compact, it follows that a closed immersion of smooth $B$-schemes is automatically finitely presented.  Moreover, closed immersions of smooth $B$-schemes are automatically regular immersions \cite[\href{https://stacks.math.columbia.edu/tag/067U}{Tag 067U}]{stacks-project}.  Consequently, any closed immersion of smooth $B$-schemes has a well-defined normal sheaf $\mathscr{N}_{X/Y}$, which is a finite rank locally free $\mathscr{O}_X$-module.

\begin{prop}
	\label{prop:smoothnessofparameterizeddeformationspaces}
	Assume $(i: Z \to W, \psi: X \to Z, f: X \to Y \times_B W)$ are as in \textup{ Construction~\ref{construction:parameterizeddeformations}}, and furthermore suppose $Z$ is an effective Cartier divisor on $W$.
	\begin{enumerate}[noitemsep,topsep=1pt]
		\item The morphism $\pi^{-1}(W \setminus Z) \to W \setminus Z$ is isomorphic to the projection $Y \times_B (W \setminus Z) \to W \setminus Z$.
		\item If the closed immersion $f': X \to Y \times_B Z$ induced by $f$ is regular (e.g., if $f$ is regular), then the morphism $\pi^{-1}(Z) \to Z$ is the composite 
		\[
		\mathbb{V}_{f'} \stackrel{p}{\longrightarrow} X \stackrel{\psi}{\longrightarrow} Z 
		\]
		where $p$ is a torsor under the vector bundle associated to $\mathscr{N}_{f'}$.  
		\item If both $i$ and $f'$ are regular immersions (the latter happens if $f$ is a regular immersion), then 
		\[
		\pi: \op{D}(X,i,f) \longrightarrow W
		\]
		is a smooth morphism; if $Y$ is furthermore finitely presented, then so is $\pi$.
	\end{enumerate}
\end{prop}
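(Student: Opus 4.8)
The plan is to verify smoothness \emph{locally} on $W$ and on $\op{D}(X,i,f)$, reducing to an explicit presentation of the affine blow-up algebra to which the Jacobian criterion applies. Since affine blow-ups commute with flat base change on $Y\times_B W$ (from the construction in \cite[\S 2]{MRR}) and since smoothness is étale-local on the source, I would first pass to an étale chart on which $A:=\mathscr{O}(Y\times_B W)$ is étale over $R[y_1,\ldots,y_m]$, where $R=\mathscr{O}(W)$ and $m=\dim(Y/B)$; in particular $\Omega_{A/R}$ is free with basis $dy_1,\ldots,dy_m$. Let $t\in R$ be the local equation of the effective Cartier divisor $Z$; because $i$ is a regular immersion of codimension one, $t$ is a nonzerodivisor, and it remains one in $A$ by flatness of $A$ over $R$.

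The first key step is to pin down the blow-up algebra on this chart. Because $X\subset Y\times_B Z=V(t)$ and $f'$ is regular, the ideal $\mathscr{I}$ of $X$ in $Y\times_B W$ is locally of the form $(t,g_1,\ldots,g_c)$, where $\bar g_1,\ldots,\bar g_c\in A/t$ is a regular sequence cutting out $X$ and $c$ is the rank of $\mathscr{N}_{f'}$. Since $t$ is a nonzerodivisor and the $\bar g_i$ form a regular sequence, $t,g_1,\ldots,g_c$ is a regular sequence in $A$; such an ideal is of linear type, so the affine blow-up chart $\op{Spec} A[\mathscr{I}/t]$ is cut out with no hidden relations:
\[
\op{D}(X,i,f) \;\cong\; \op{Spec}\, A[u_1,\ldots,u_c]/(t\,u_i-g_i),\qquad u_i=g_i/t .
\]
Here I would cite \cite{MRR} for the identification of the chart with $\op{Spec} A[\mathscr{I}/t]$; concretely, for the regular sequence $t,g_1,\ldots,g_c$ the element $t$ is a nonzerodivisor on the displayed quotient, which therefore injects into its localization $A_t$ and is identified with $A[\mathscr{I}/t]$.

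With the presentation in hand I would apply the Jacobian criterion. Writing $B'$ for the displayed algebra and $P=A[u_1,\ldots,u_c]$, which is smooth over $R$ of relative dimension $m+c$, the relations $h_i=t\,u_i-g_i$ have differentials $dh_i=t\,du_i-\sum_k(\partial g_i/\partial y_k)\,dy_k$. The $u$-block of the Jacobian is $t\cdot I_c$, so at any prime not containing $t$ the minor $t^c$ is a unit, recovering smoothness over $W\setminus Z$ as in part (1). At a prime containing $t$, the relations force $g_1,\ldots,g_c$ into the prime, so the point lies over $X$; there I would use that $\psi\colon X\to Z$ and $Y\times_B Z\to Z$ are smooth, which makes the conormal sequence of $f'$ split short exact, so that $dg_1,\ldots,dg_c$ span a rank-$c$ subbundle of $\Omega_{(Y\times_B Z)/Z}$ along $X$; equivalently, some $c\times c$ minor of $(\partial g_i/\partial y_k)$ is a unit along $X$, recovering smoothness over $Z$ as in part (2). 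Thus the $c\times c$ minors of the Jacobian generate the unit ideal of $B'$, so $\pi$ is locally standard smooth over $W$ of relative dimension $m$, hence smooth. Finally, if $Y$ is finitely presented over $B$ then so is $Y\times_B W\to W$, and the displayed finite presentations exhibit $\pi$ as finitely presented.

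The step I expect to be the main obstacle is the passage from the hypotheses to the clean local presentation of the blow-up: one must know that $\mathscr{I}=(t,g_1,\ldots,g_c)$ is generated by a regular sequence \emph{and} that this regular sequence is of linear type, so that $A[\mathscr{I}/t]$ has exactly the relations $t\,u_i=g_i$ and $t$ stays a nonzerodivisor on the quotient. Once this is secured, the Jacobian computation is routine, the only genuine input over the divisor being the translation of smoothness of $\psi$ into the full-rank statement for $(\partial g_i/\partial y_k)$ via the split conormal sequence.
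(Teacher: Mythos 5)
Your proof is correct in substance, but it takes a genuinely different route from the paper. The paper's proof is almost entirely citation-based: it factors $\pi$ through the ordinary blow-up $Y' = \op{Bl}_X(Y\times_B W)$ and invokes \cite{MRR} --- Lemma 2.4 for the identification of the locus over $W\setminus Z$ (part 1), Proposition 2.9(1) for the torsor description of $\pi^{-1}(Z)$ (part 2), and Proposition 2.16(1),(4) for smoothness and finite presentation (part 3). You instead re-prove the relevant results of \cite{MRR} in the case at hand: reducing via an \'etale chart for the smooth morphism $Y\times_B W\to W$ and flat base change of affine blow-ups to the presentation $A[u_1,\ldots,u_c]/(tu_i-g_i)$ --- legitimate because $(t,g_1,\ldots,g_c)$ is a regular sequence, hence of linear type, so $t$ stays a nonzerodivisor on the quotient --- and then running the Jacobian criterion, with the locally split conormal sequence of $f'$ (exact since $\psi$ and $Y\times_B Z\to Z$ are smooth) supplying the invertible $c\times c$ minor of $(\partial g_i/\partial y_k)$ at points over $X$. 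All the individual steps check out, including in the stated non-Noetherian generality (the immersions are finitely presented and regular, and the standard-smooth criterion applies to the locally finitely presented algebra $B'=A[u]/(tu_i-g_i)$). What your approach buys is self-containedness and visible geometry: parts (1) and (2) fall out of the same chart by inverting, respectively reducing modulo, $t$. What it costs is the generality of \cite{MRR} (whose results hold under the weaker $H_1$-regularity noted in the paper's remark) and, as written, a slight slippage on part (2): you describe it as ``smoothness over $Z$,'' but part (2) asserts that $\pi^{-1}(Z)\to Z$ factors as a torsor under $\mathbb{V}(\mathscr{N}_{f'})$ followed by $\psi$. Your chart does show the fiber is locally $\Spec\mathscr{O}_X[u_1,\ldots,u_c]$, and the coordinates $u_i=g_i/t$ transform affine-linearly under changes of $(t,g_1,\ldots,g_c)$ with linear part the transition matrices of $\mathscr{N}_{f'}$, but you should say this explicitly --- it is exactly what the paper extracts ``by inspecting the proof'' of \cite[Proposition 2.9(1)]{MRR}. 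Similarly, for finite presentation record the qcqs half: the affine blow-up is affine over $Y\times_B W$, which is finitely presented over $W$ when $Y$ is, so the local presentations do globalize.
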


\begin{proof}
	We set $Y' = \op{Bl}_{X}(Y \times_B W)$ and observe that $\pi$ factors as 
	\[
	\op{D}(X,i,f) \longrightarrow Y' \longrightarrow Y \times_B W \longrightarrow W
	\]
	where the first morphism is an open immersion, and $Y' \to Y$ is an isomorphism on the complement of $X$ \cite[\href{https://stacks.math.columbia.edu/tag/02OS}{Tag 02OS}]{stacks-project}.  The first statement then follows from \cite[Lemma 2.4]{MRR} which states that the exceptional divisor of the affine blow-up, i.e., the pre-image of the center of the affine blow-up, coincides with the pre-image of $Y \times_B Z$.  
	
	It follows from the final observation of the preceding paragraph that the restriction of $\pi$ to $Z$ also factors as the composite of the projection map from the exceptional divisor to the center of the blow-up followed by the map $\psi$ in \ref{construction:parameterizeddeformations}.  In that case, the second statement is \cite[Proposition 2.9(1)]{MRR}; the assertion that the exceptional divisor is a torsor under a vector bundle follows by inspecting the proof of that result.  
	
	For the third statement, observe that the smoothness of $\pi$ under the stated hypotheses is \cite[Proposition 2.16(4)]{MRR}.  If $Y$ is finitely presented, then $Y \times_B W \to W$ is as well.  In that case, $\pi$ is finitely presented by \cite[Proposition 2.16(1)]{MRR}.  
\end{proof}

\begin{rem}
	As observed in \cite[\S 2.3]{MRR}, all of the results in \cite[\S 2]{MRR} hold under a weaker hypothesis than stated above.  In particular, Proposition~\ref{prop:smoothnessofparameterizeddeformationspaces} holds replacing the regularity assumption on $i$ and $f'$ by $H_1$-regularity in the sense of \cite[\href{https://stacks.math.columbia.edu/tag/063D}{Tag 063D}]{stacks-project}.
\end{rem}

\begin{cor}[Deformation to the normal cone]
	\label{cor:deformationtothenormalcone}
	If $B$ is a base scheme and $i: X \to Y$ is a closed immersion of finitely presented smooth $B$-schemes, 
	the morphism $\pi: \op{D}(X,Y) \to \aone_B$  (see \textup{Example~\ref{ex:deformationspace}}) enjoys the following properties:
	\begin{enumerate}[noitemsep,topsep=1pt]
		\item The morphism $\pi^{-1}({\mathbb A}^1_B \setminus 0) \to {\mathbb A}^1_B \setminus 0$ is isomorphic to the projection map $Y \times_B {\mathbb A}^1_B \setminus 0 \to {\mathbb A}^1_B \setminus 0$.
		\item The $B$-scheme $\pi^{-1}(0)$ is isomorphic to $\mathbb{V}_i$, the total space of the normal bundle to $i$.
		\item The morphism $\pi$ is a smooth morphism; if $Y$ is finitely presented, then so is $\pi$.
	\end{enumerate}
\end{cor}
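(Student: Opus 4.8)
The plan is to realize $\op{D}(X,Y)$ as an instance of the parameterized deformation space of Construction~\ref{construction:parameterizeddeformations} via Example~\ref{ex:deformationspace} and then read off all three assertions from Proposition~\ref{prop:smoothnessofparameterizeddeformationspaces}. Thus I take $W = \aone_B$, $Z = B$ embedded by the zero section $i\colon B \to \aone_B$, the schemes $Y$ and $X$ as given, and $D = Y \times_B \{0\}$. Before invoking the proposition I must check its standing hypothesis that $Z$ is an effective Cartier divisor on $W$, together with the regularity conditions: the zero section is cut out by the coordinate $t$, hence is an effective Cartier divisor and a regular immersion, while the map $f'\colon X \to Y \times_B Z = Y$ induced by $f$ is simply the original immersion $i$, which is regular because any closed immersion of smooth $B$-schemes is a regular immersion (as recalled just before the proposition, using \cite[Tag 067U]{stacks-project}). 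With these checks in place, parts (1) and (3) of the corollary are verbatim the corresponding parts of Proposition~\ref{prop:smoothnessofparameterizeddeformationspaces}, since $W \setminus Z = \aone_B \setminus 0$ and $\mathscr{N}_{f'} = \mathscr{N}_i$.

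The only real content lies in part (2). The proposition delivers $\pi^{-1}(0) \to X$ as a torsor under the normal bundle $\mathscr{N}_i$, whereas I must identify $\pi^{-1}(0)$ with the total space $\mathbb{V}_i$ itself, i.e.\ exhibit this torsor as the trivial one. For this I would produce a canonical section of $\pi^{-1}(0) \to X$. The natural candidate is the restriction over $0$ of a closed immersion $X \times_B \aone_B \hookrightarrow \op{D}(X,Y)$: since the center $X \times_B \{0\}$ is already a Cartier divisor in $X \times_B \aone_B$, blowing it up changes nothing, so the strict transform of $X \times_B \aone_B$ is isomorphic to $X \times_B \aone_B$ and survives into the affine blow-up. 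A torsor under a vector bundle that admits a section is trivial, so this yields $\pi^{-1}(0) \cong \mathbb{V}_i$ compatibly with the projection to $X$.

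I expect part (2) to be the main obstacle, precisely because it requires upgrading \emph{torsor under $\mathscr{N}_i$} to \emph{total space of $\mathscr{N}_i$}. To make the section argument airtight I would verify it in the affine charts of the affine blow-up: writing $Y = \Spec A$ locally with $X = V(g_1, \dots, g_c)$ for a regular sequence generating an ideal $I$, one has $\op{D}(X,Y) = \Spec A[t, g_1/t, \dots, g_c/t]$ over $\Spec A[t]$, the fiber over $t = 0$ is $\Spec \bigl(\Sym_{A/I}(I/I^2)\bigr) = \mathbb{V}_i$ with the classes $g_j/t$ restricting to the linear coordinates on the fiber corresponding to $g_j \bmod I^2$, and the desired section is cut out by $g_1/t = \cdots = g_c/t = 0$ (equivalently, the local description of $X \times_B \aone_B \hookrightarrow \op{D}(X,Y)$ is the surjection sending each $g_j/t$ to $0$). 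The remaining bookkeeping—that these local descriptions glue to the asserted global isomorphism and are compatible with the torsor structure furnished by Proposition~\ref{prop:smoothnessofparameterizeddeformationspaces}—is routine.
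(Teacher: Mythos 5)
Your proposal is correct, and its skeleton coincides with the paper's: both reduce everything to Proposition~\ref{prop:smoothnessofparameterizeddeformationspaces} after verifying that the zero section $B \hookrightarrow \aone_B$ is an effective Cartier divisor and a regular immersion and that $f' = i$ is regular, so parts (1) and (3) are immediate; and both isolate the upgrade from \emph{torsor under $\mathscr{N}_i$} to \emph{total space of $\mathscr{N}_i$} as the only real content of part (2) --- exactly the point the paper flags as ``the only thing that isn't immediate.'' Where you diverge is in how that upgrade is carried out. The paper observes that the exact sequence of conormal sheaves attached to $X \hookrightarrow Y \hookrightarrow Y \times_B \aone$ is split by the projection $Y \times_B \aone \to Y$, and then invokes (the proof of) \cite[Proposition 2.9(2)]{MRR}; you instead trivialize the torsor by exhibiting an explicit section, namely the restriction over $0$ of the strict transform $X \times_B \aone_B \hookrightarrow \op{D}(X,Y)$, which in your chart $\Spec A[t, g_1/t, \ldots, g_c/t]$ is cut out by $g_1/t = \cdots = g_c/t = 0$ and visibly equals $\Spec (A/I)[t]$. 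These are two faces of the same trivialization: the projection-induced splitting of the conormal sequence singles out the summand $I \cdot A[t] \subset (I,t)$, which is precisely the ideal-theoretic shadow of your closed immersion $X \times_B \aone_B \hookrightarrow \op{D}(X,Y)$. What your route buys is self-containedness --- the step where the paper defers to the proof of a result in \cite{MRR} is replaced by a concrete section plus the standard fact that a vector-bundle torsor with a section is canonically the bundle, with the section going to the zero section, which also settles the compatibility you flag --- at the modest cost of the chart-by-chart gluing bookkeeping, which is indeed routine since the strict transform is intrinsic and independent of the chosen regular sequence.
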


\begin{proof}
	We check the hypotheses of Proposition~\ref{prop:smoothnessofparameterizeddeformationspaces} are satisfied.  Indeed, here we take $i: Z \to W$ as the closed immersion $B \hookrightarrow \aone_B$, which is a Cartier divisor and a regular immersion.  The map $f': X \to Y$ is also a regular immersion by assumption, and the result follows.  The only thing that isn't immediate is that $\mathbb{V}_i$ is the normal bundle rather than a torsor under it.  However, the exact sequence of conormal sheaves associated with the sequence of closed immersions $X \hookrightarrow Y \hookrightarrow Y \times_B \aone$ is split by the projection $Y \times_B \aone \to Y$, so the final assertion follows from (the proof of) \cite[Proposition 2.9(2)]{MRR}.
\end{proof}

Unwinding the definitions of blow-up algebras, it is frequently possible to write explicitly defining equations for the parameterized deformation spaces described above.  In particular, one realizes that many varieties presented explicitly by equations can be realized in terms of the parameterized deformation space construction.

\begin{ex}
	\label{prop:equationsZprincipal}
	Assume $B = \Spec k$ is the spectrum of  an arbitrary base ring $k$ and pick coordinates $x_1,\ldots,x_n$ on ${\mathbb A}^n_B$. Assume that $i: Z \hookrightarrow {\mathbb A}^n_B$ is a closed immersion of $B$-schemes defined by an element $g$, that $Y = \Spec R$ is a smooth affine $k$-scheme and that $X' \hookrightarrow Y$ is a smooth closed subscheme defined by a regular ideal $I=(f_1,\ldots,f_c)$.  Then  $f: X = X' \times_B Z \hookrightarrow Y \times_B {\mathbb A}^n$ is a regular closed immersion whose image is the smooth closed subscheme of $Y \times_B {\mathbb A}^n_B$ defined by the regular sequence $(f_1,\ldots,f_c,g)$. The scheme $Y \times_B Z$ is a Cartier divisor in $Y \times_B {\mathbb A}^n_B$ cut out by the single element $g$.  Unwinding the definitions, we see that the affine scheme of Construction~\ref{construction:parameterizeddeformations} is given explicitly by: 
	\[
	\D(X,i,f) = \Spec R[x_1,\ldots,x_n][I/g]=\Spec R[x_1,\ldots,x_n,t_1,\ldots,t_c]/(f_1 - t_1g,\ldots,f_c - t_c g).
	\]
\end{ex}

\subsection{The purity isomorphism}
We will refer to a closed immersion of smooth $B$-schemes $X \subset Y$ as a smooth pair.  By a morphism of smooth pairs $f: (X \subset Y) \to (X' \subset Y')$, we will mean a morphism of $B$-schemes $f: Y \to Y'$ such that $f$ restricts to a morphism of $B$-schemes $X \to X'$.  A morphism of smooth pairs is {\em transversal} if $X = f^{-1}(X')$ and the induced map $\varphi: \mathscr{N}_{X/Y} \to f^* \mathscr{N}_{X'/Y'}$ is an isomorphism.  Having introduced the above notions, the homotopy purity theorem takes the following form (we state it here because we could not find the statement at this level of generality in the literature; the original statement is \cite[\S 3 Theorem 3.23]{MV}).

\begin{thm}[Homotopy purity]
\label{thm:purity}
Given a smooth pair $X \subset Y$ in $\Sm_B$, there is a canonical isomorphism in $\ho{B}$ of the form
\[
X/(X \setminus Y) \cong \Th(\mathscr{N}_{X/Y}).
\]
Given a transversal morphism of pairs $f: (X \subset Y) \to (X' \subset Y')$, there is a homotopy commutative square of the form
\[
\xymatrix{
X/(X \setminus Y) \ar[r]\ar[d] & \Th(\mathscr{N}_{X/Y}) \ar[d]\\
X'/(X' \setminus Y') \ar[r] & \Th(\mathscr{N}_{X'/Y'}),
}
\]
where the vertical maps are induced by $f$.
\end{thm}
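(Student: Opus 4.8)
The plan is to run the deformation-to-the-normal-cone argument of Morel--Voevodsky, but organized so that the only inputs are the smoothness and fiber structure of the deformation space recorded in Corollary~\ref{cor:deformationtothenormalcone}, together with the fact that Nisnevich distinguished squares are homotopy cocartesian in $\ho{B}$ (\cite[Theorem 2.21]{MV}); both are available over an arbitrary base $B$. First I would construct the canonical comparison map. For the smooth pair $X\subset Y$, form the deformation space $\pi\colon\op{D}(X,Y)\to\aone_B$ together with its total center $\tilde X := X\times_B\aone_B\hookrightarrow\op{D}(X,Y)$. By Corollary~\ref{cor:deformationtothenormalcone}, $\op{D}(X,Y)$ is smooth over $B$, the fiber over $1$ recovers the pair $(X\subset Y)$, and the fiber over $0$ recovers the zero section $(X\subset\mathbb{V}_{X/Y})$, where $\mathbb{V}_{X/Y}$ is the total space of $\mathscr{N}_{X/Y}$. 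The two fiber inclusions are transversal morphisms of smooth pairs, hence induce maps of quotients
\[
j_1\colon Y/(Y\setminus X)\to \op{D}(X,Y)/(\op{D}(X,Y)\setminus\tilde X), \qquad j_0\colon \Th(\mathscr{N}_{X/Y})\to \op{D}(X,Y)/(\op{D}(X,Y)\setminus\tilde X),
\]
the second using $\Th(\mathscr{N}_{X/Y})=\mathbb{V}_{X/Y}/(\mathbb{V}_{X/Y}\setminus X)$. The purity isomorphism will then be $j_0^{-1}\circ j_1$, once both maps are shown to be $\aone$-weak equivalences.

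The heart of the matter is therefore the claim that $j_0$ and $j_1$ are $\aone$-weak equivalences, which I would check Nisnevich-locally on $X$. Since the construction of $\op{D}(X,Y)$ is compatible with étale base change on the center, after passing to a Nisnevich cover of $X$ I may assume $\mathscr{N}_{X/Y}$ is trivial and choose étale coordinates, that is, an étale morphism from a neighborhood $U$ of $X$ in $Y$ to $\A^c\times_B X$ restricting to the identity on $X$. An étale map that is an isomorphism over the reduced closed subscheme $X$ fits into an elementary Nisnevich distinguished square, so the induced map of deformation spaces $\op{D}(X,U)\to\op{D}(0\times X,\A^c\times_B X)$ is again étale and an isomorphism over the common center; by \cite[Theorem 2.21]{MV} this identifies the corresponding quotients. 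For the split model one computes $\op{D}(0\times X,\A^c\times_B X)\cong(\A^c\times_B X)\times_B\aone_B$, with center $(0\times X)\times_B\aone_B$, so forgetting the $\aone_B$-factor induces an $\aone$-weak equivalence onto $\Th(\mathscr{O}_X^{\oplus c})$ by homotopy invariance, of which each fiber inclusion is a section; hence $j_0$ and $j_1$ are $\aone$-weak equivalences for the split model, and descending along the Nisnevich cover yields the claim for $\op{D}(X,Y)$.

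For the second assertion, a transversal morphism of pairs $f\colon(X\subset Y)\to(X'\subset Y')$ induces, by the functoriality of Construction~\ref{construction:parameterizeddeformations}, a morphism of deformation spaces $\op{D}(f)\colon\op{D}(X,Y)\to\op{D}(X',Y')$ over $\aone_B$ carrying $\tilde X$ into $\tilde X'$ and commuting with both fiber inclusions. Transversality (namely $X=f^{-1}(X')$ and $\varphi\colon\mathscr{N}_{X/Y}\to f^*\mathscr{N}_{X'/Y'}$ an isomorphism) guarantees that on the zero fiber $\op{D}(f)$ restricts to the map of Thom spaces induced by $\varphi$, while on the unit fiber it restricts to $f$. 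Applying the quotient functor to the resulting commuting diagram of deformation spaces and fiber inclusions then produces the homotopy-commutative naturality square, since each horizontal purity isomorphism is the natural composite $j_0^{-1}\circ j_1$.

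I expect the main obstacle to be the general-base bookkeeping in the second paragraph: with no Noetherian or finite-dimensionality hypotheses one cannot argue by induction on dimension, so the reduction must be carried out purely through the Nisnevich $cd$-structure. In particular one must verify carefully that forming the deformation space commutes with étale localization on the center $X$, that an étale morphism which is an isomorphism over the reduced $X$ genuinely yields a distinguished square over an arbitrary $B$, and that the comparison of deformation spaces inherits this property. The smoothness statement of Proposition~\ref{prop:smoothnessofparameterizeddeformationspaces} is precisely what keeps all intermediate objects inside $\Sm_B$ throughout this reduction, so that the local model computation and the descent are legitimate.
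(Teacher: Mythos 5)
Your proposal is correct and is essentially the proof the paper itself invokes: the paper establishes this theorem by citing the deformation-to-the-normal-cone argument in Hoyois's proof of homotopy purity (noting that the extra hypotheses there come only from the group action), with Corollary~\ref{cor:deformationtothenormalcone} supplying the needed properties of $\op{D}(X,Y)$ over an arbitrary base and Voevodsky's classical lemma giving the transversal functoriality --- exactly the two inputs and the two-fiber-inclusion structure ($j_0$, $j_1$) you reconstruct. The one step you gloss, namely that the $\aone$-weak equivalence of quotients may be checked Nisnevich-locally on $X$ (the ``weakly excisive'' descent machinery, together with the Nisnevich-local existence of \'etale parametrizations $U \to {\mathbb A}^c \times_B X$, which requires a fiber-product trick and is not available Zariski-locally), is precisely what the cited proof supplies, so your outline faithfully matches the paper's route.
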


\begin{proof}
This follows immediately from the proof of \cite[Theorem 3.23]{HoyoisEquivariant}.  There are two remarks to make: the additional hypotheses (implicitly) placed on $B$ in \cite{HoyoisEquivariant} stem from the presence of the action of a group scheme.  Moreover, the fact that deformation to the normal cone construction has the required properties over an arbitrary base scheme $B$ follows from Corollary~\ref{cor:deformationtothenormalcone}.  The transversality hypothesis is precisely what is needed to obtain a morphism of Thom spaces; the classical functoriality statement may be found at \cite[Lemma 2.1]{VoevodskyMC}.
\end{proof}

\section{$\mathbb{A}^1$-homotopy types of generic weak equivalences}

Fix a base scheme $B$, and suppose $W$ is a pointed smooth $B$-scheme.  Recall from the introduction that a smooth morphism $\pi: \mathfrak{X} \to W$ is called a generic $\aone$-weak equivalence if there exists an open dense subscheme $U$ of $W$ such that $\pi|_U: \mathfrak{X}|_U \to U$ is an $\aone$-weak equivalence.  For $U$ maximal with this property, we will refer to $W \setminus U$ as the {\em degeneration locus} of $\pi$.  Our aim is to describe the $\aone$-homotopy type of $\mathfrak{X}$ in terms of the degeneration locus $W \setminus U$ of $\pi$.

\subsection{Degenerations over a subscheme} 
\label{ss:degenerationoverasubscheme}
The goal of this section is to establish the following result.

\begin{thm}
\label{thm:mainmultiple}
Fix a finitely presented, pointed, smooth $B$-scheme $(W,w)$ that is $\aone$-contractible.  Assume $\mathfrak{X}$ is a $B$-scheme, and $\pi: \mathfrak{X} \to W$ is a smooth morphism of $B$-schemes admitting a section $s$ (point $\mathfrak{X}$ by $s(w)$).  Assume $Z$ is a finitely presented smooth $B$-scheme, $i: Z \to W$ is a closed immersion, and let $j: U := W \setminus Z \to W$ be the complementary open immersion.  Write $\mathfrak{X}|_U$ for the restriction of $\mathfrak{X}$ to $U$ and $\pi|_U$ for the restricted morphism, and let $i': \mathfrak{X}_Z \to \mathfrak{X}$ be the base-change of $i$ along $\pi$.  The morphisms $i$ and $i'$ are regular immersions. If the morphism $\pi|_U: \mathfrak{X}_U \to U$ is an $\aone$-weak equivalence, then there is a split cofiber sequence 
\[
\Th(\mathscr{N}_{i}) \longrightarrow \Th(\mathscr{N}_{i'}) \longrightarrow \mathfrak{X},
\]
where $\mathscr{N}_i$ and $\mathscr{N}_{i'}$ are the normal bundles of the corresponding regular immersions.
\end{thm}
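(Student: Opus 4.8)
The plan is to recognize the two outer terms as Thom spaces via homotopy purity, to manufacture the first map $\bar s$ out of the section $s$ using the transversality clause of Theorem~\ref{thm:purity}, and then to compute the cofiber by feeding the open--closed decompositions of $W$ and $\mathfrak{X}$ through the octahedral axiom, invoking the $\aone$-contractibility of $W$ only at the very end. First I would collect the geometry. As $\pi$ is smooth, $\mathfrak{X}$ is smooth over $B$, the closed subscheme $\mathfrak{X}_Z=\pi^{-1}(Z)$ is the base change of $i$ along the flat map $\pi$, so $i'$ is a regular immersion with $\mathscr{N}_{i'}\cong\pi_Z^{\ast}\mathscr{N}_i$, where $\pi_Z\colon\mathfrak{X}_Z\to Z$ is the restriction of $\pi$. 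Homotopy purity then identifies
\[
W/U\cong\Th(\mathscr{N}_i)\quad\text{and}\quad\mathfrak{X}/\mathfrak{X}_U\cong\Th(\mathscr{N}_{i'}).
\]
Because $\pi s=\operatorname{id}_W$, the section maps $U$ into $\mathfrak{X}_U$ and $Z$ into $\mathfrak{X}_Z$, satisfies $Z=s^{-1}(\mathfrak{X}_Z)$, and the comparison $\mathscr{N}_i\to s_Z^{\ast}\mathscr{N}_{i'}=s_Z^{\ast}\pi_Z^{\ast}\mathscr{N}_i$ is the identity; thus $s$ is a transversal morphism of pairs $(Z\subset W)\to(\mathfrak{X}_Z\subset\mathfrak{X})$. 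The functoriality statement in Theorem~\ref{thm:purity} then supplies a map $\bar s\colon\Th(\mathscr{N}_i)\to\Th(\mathscr{N}_{i'})$ compatible with the collapse maps $W_+\to W/U$ and $\mathfrak{X}_+\to\mathfrak{X}/\mathfrak{X}_U$, and running the same argument for $\pi$ produces $\bar\pi$ with $\bar\pi\bar s=\operatorname{id}$, which will furnish the splitting.

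The only homotopical input from the hypothesis is recorded next: since $\pi|_U$ is an $\aone$-weak equivalence and $\pi|_U\circ s|_U=\operatorname{id}_U$, two-out-of-three shows $s|_U$, and hence $(s|_U)_+\colon U_+\to\mathfrak{X}_{U,+}$, is an $\aone$-weak equivalence, so its homotopy cofiber is $\aone$-contractible. With these maps in hand I would run the cofiber bookkeeping on the commuting square
\[
\xymatrix{
U_+\ar[r]^-{a}\ar[d]_-{(s|_U)_+} & W_+\ar[d]^-{s_+}\\
\mathfrak{X}_{U,+}\ar[r]^-{a'} & \mathfrak{X}_+,
}
\]
whose horizontal cofibers are $\Th(\mathscr{N}_i)$ and $\Th(\mathscr{N}_{i'})$. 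Writing $h:=s_+\circ a=a'\circ(s|_U)_+$, I apply Proposition~\ref{prop:octahedralaxiom} to $U_+\xrightarrow{(s|_U)_+}\mathfrak{X}_{U,+}\xrightarrow{a'}\mathfrak{X}_+$: since $\operatorname{hocofib}((s|_U)_+)\simeq\ast$ this yields $\operatorname{hocofib}(h)\simeq\operatorname{hocofib}(a')=\Th(\mathscr{N}_{i'})$. Applying Proposition~\ref{prop:octahedralaxiom} instead to $U_+\xrightarrow{a}W_+\xrightarrow{s_+}\mathfrak{X}_+$ produces the cofiber sequence
\[
\Th(\mathscr{N}_i)=\operatorname{hocofib}(a)\longrightarrow\operatorname{hocofib}(h)\simeq\Th(\mathscr{N}_{i'})\longrightarrow\operatorname{hocofib}(s_+).
\]
By naturality of the octahedral construction with respect to the square, the first arrow is the map induced by $s$ on horizontal cofibers, which under purity is exactly $\bar s$.

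It then remains to identify $\operatorname{hocofib}(s_+)$ with $\mathfrak{X}$. The pointed map $s\colon(W,w)\to(\mathfrak{X},s(w))$ is a monomorphism, hence a cofibration, and $\operatorname{hocofib}(s_+)=\mathfrak{X}_+/W_+=\mathfrak{X}/s(W)=\operatorname{hocofib}(s)$. Since $(W,w)$ is $\aone$-contractible, Lemma~\ref{lem:leftproper} shows that $(\mathfrak{X},s(w))\to\operatorname{hocofib}(s)$ is an $\aone$-weak equivalence, whence $\operatorname{hocofib}(s_+)\simeq\mathfrak{X}$. Substituting into the displayed sequence yields the asserted cofiber sequence $\Th(\mathscr{N}_i)\xrightarrow{\bar s}\Th(\mathscr{N}_{i'})\to\mathfrak{X}$, and the retraction $\bar\pi$ of $\bar s$ exhibits it as split.

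I expect the main obstacle to be the naturality in the octahedral step—checking that the connecting map it produces really is $\bar s$, and not merely some map between the correct objects—together with the transversality verifications needed to define $\bar s$ and $\bar\pi$ and to secure the relation $\bar\pi\bar s=\operatorname{id}$ in $\ho{B}$. By comparison, the left-properness argument identifying the cofiber with $\mathfrak{X}$ is routine, as is the passage from the regularity of $i$ to that of $i'$ and the isomorphism $\mathscr{N}_{i'}\cong\pi_Z^{\ast}\mathscr{N}_i$.
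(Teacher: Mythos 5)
Your proposal is correct and takes essentially the same route as the paper: both arguments compute $\hocofib(U \to \mathfrak{X})$ in two ways around the Cartesian square built from the section $s$ --- purity together with the two-out-of-three equivalence $s|_U$ identifying it with $\Th(\mathscr{N}_{i'})$ on one side, and the octahedral axiom plus left properness and the $\aone$-contractibility of $W$ producing the sequence $\Th(\mathscr{N}_i) \to \Th(\mathscr{N}_{i'}) \to \mathfrak{X}$ on the other, with the splitting induced by $\pi$. Your extra care in using the transversality clause of Theorem~\ref{thm:purity} to pin down the connecting map as $\bar{s}$ (and $\bar{\pi}\bar{s}=\mathrm{id}$) only makes explicit what the paper treats briefly when it notes the transversal morphism of pairs and that the first map is split by $\pi$.
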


\begin{proof}
For notational convenience, we will suppress $B$ from the notation. Consider the section $s: W \to \mathfrak{X}$.  By assumption, the restriction of $s$ to $\mathfrak{X}_U$ coincides with the inclusion $U \to \mathfrak{X}_U$.  Since $\pi|_U$ is an $\aone$-weak equivalence by construction and since $s$ is a section of $\pi$, it follows that $s|_U$ is an $\aone$-weak equivalence by the $2$ out of $3$ property for $\aone$-weak equivalences.  The section $s$ then induces a Cartesian square of the form
\begin{equation}
\label{equation:smoothpairdiagram}
\xymatrix{
U \ar[r]\ar[d] & W \ar[d] \\
\mathfrak{X}_U \ar[r]& \mathfrak{X}.
}
\end{equation}
By assumption, the complementary closed immersions form a transversal morphism of pairs.

Note that $\mathfrak{X}_Z$ is smooth over $B$ since it is smooth over $Z$.  Since $Z \to W$ is a closed immersion of smooth schemes, it follows that the base-change of $\pi$ along $i$, i.e., $\mathfrak{X}_Z \to \mathfrak{X}$, is a closed immersion of smooth $B$-schemes.  As a consequence, $i': \mathfrak{X}_Z \to \mathfrak{X}$ is a regular immersion and thus has a well-defined normal sheaf $\mathscr{N}_{i'}$.

By homotopy purity in Theorem \ref{thm:purity}, there is thus an $\aone$-weak equivalence $\mathfrak{X}/\mathfrak{X}_U \sim \Th(\mathscr{N}_{i'})$.  Since the map $U \to \mathfrak{X}_U$ is an $\aone$-weak equivalence, we also conclude that
\[
\hocofib(U \to \mathfrak{X}) \cong \Th(\mathscr{N}_{i'}).
\]
This description corresponds to computing the homotopy colimit of the morphism $U \to \mathfrak{X}$ going along the left and bottom edge of \eqref{equation:smoothpairdiagram}.

Now, we compute the homotopy colimit of $U \to \mathfrak{X}$ going around the top and right edge of \eqref{equation:smoothpairdiagram}.  To this end, we appeal to Proposition~\ref{prop:octahedralaxiom}, which tells us there is a cofiber sequence of the form
\[
\hocofib(U \to W) \longrightarrow \hocofib(U \to \mathfrak{X}) \longrightarrow \hocofib(W \to \mathfrak{X}).
\]
The homotopy purity isomorphism applied to the closed immersion $Z \to W$ shows that the first term in the above cofiber sequence is $\aone$-weakly equivalent to $\Th(\mathscr{N}_{i})$.  Because $W$ is $\aone$-contractible, Lemma~\ref{lem:leftproper} implies that the last term is $\aone$-weakly equivalent to $\mathfrak{X}$ pointed by any point in the image of $W$, e.g., $s(w_0)$.

Combining all the observations above, we conclude that there is a cofiber sequence of the form
\[
\Th(\mathscr{N}_{i}) \longrightarrow \Th(\mathscr{N}_{i'}) \longrightarrow \mathfrak{X}.
\]
Note that the first map is split by $\pi$ and is thus evidently a cofibration; in fact, $\mathfrak{X}$ is the actual cofiber of the first map as well.
\end{proof}

\begin{rem}
	\label{rem:gluing}
	Suppose $B$ is qcqs base scheme, $W$ is a finitely presented smooth $B$-scheme, and $\pi: \mathfrak{X} \to W$ is a smooth morphism.  Assume $i: Z \to W$ is a closed immersion with open complement $j: U \to W$.  In that case, there is a homotopy cocartesian gluing square (see \cite[Theorem 2.21]{MV} or \cite[Theorem 4.18]{HoyoisEquivariant} for a statement in this generality) of the form
	\[
	\xymatrix{
	j_{\sharp}j^* \mathfrak{X} \ar[r]\ar[d] & \mathfrak{X} \ar[d] \\
	U \ar[r] & i_*\mathfrak {X}_Z.
	}
	\]
	Unwinding the definitions, $j_{\sharp}j^* \mathfrak{X}$ is simply $\mathfrak{X}|_U$ considered as a $W$-scheme.  If $\pi|_U$ is an $\aone$-weak equivalence, then the map $j_{\sharp}j^* \mathfrak{X} \to U$ is an $\aone$-weak equivalence also.  The top horizontal map is a cofibration, and so left properness of the $\aone$-local model structure implies that $\mathfrak{X} \to i_* \mathfrak{X}_Z$ is an $\aone$-weak equivalence as well.  In essence, Theorem~\ref{thm:mainmultiple} shows that the additional control afforded by the section $s$ and the smoothness of $Z$ allows us to identify the homotopy type of $i_*\mathfrak {X}_Z$ more explicitly.
\end{rem}

\begin{rem}
\label{rem:trivializations}
If $s: X \to B$ is the structure morphism of a smooth $B$-scheme, and if $\mathscr{E}$ is a finite rank locally free sheaf of $\mathscr{O}_X$-modules, then by a {\em weak trivialization} of $\mathscr{E}$ we will mean a pair $(\mathscr{E}_0,\varphi)$ consisting of a finite rank locally free sheaf $\mathscr{E}_0$ of $\mathscr{O}_B$-modules and an isomorphism $\varphi: \mathscr{E} \isomt s^* \mathscr{E}_0$; we will say that $\mathscr{E}$ is {\em constant} if it admits a weak trivialization.  Extending \cite[\S 3 Proposition 2.7(2)]{MV}, if $\mathscr{E}$ is a constant locally free sheaf of finite rank with a chosen weak trivialization $(\mathscr{E}_0,\varphi)$, then
\[
\Th(\mathscr{E}) \cong \Th(\mathscr{E}_0) \sma_B X_+.
\]
Indeed, this observation follows from that one by simply choosing an open cover of $X$ along which $\mathscr{E}_0$ is trivial.
\end{rem}

While Theorem~\ref{thm:mainmultiple} does give some insight into the $\aone$-homotopy type of $\mathfrak{X}$, it will be more useful with additional hypotheses in place.  For example, we may use Theorem~\ref{thm:mainmultiple} to construct $\aone$-contractible smooth schemes via the following corollary.

\begin{cor}
\label{cor:multiplea1contractibles}
Assume $\pi: \mathfrak{X} \to W$ and $Z \to W$ are as in \textup{Theorem~\ref{thm:mainmultiple}}.  If the map $\mathfrak{X}_Z \to Z$ is an $\aone$-weak equivalence, then $\mathfrak{X}$ is $\aone$-contractible.
\end{cor}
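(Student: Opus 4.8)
The plan is to show that the first map in the split cofiber sequence furnished by Theorem~\ref{thm:mainmultiple},
\[
\alpha \colon \Th(\mathscr{N}_{i}) \longrightarrow \Th(\mathscr{N}_{i'}),
\]
is an $\aone$-weak equivalence. Since Theorem~\ref{thm:mainmultiple} identifies $\mathfrak{X}$ with the (homotopy) cofiber of $\alpha$, it will follow at once that $\mathfrak{X}$ is $\aone$-contractible. First I would unwind what $\alpha$ is: in the proof of Theorem~\ref{thm:mainmultiple} it arises, via the purity isomorphisms, from the map $W/U \to \mathfrak{X}/\mathfrak{X}_U$ induced by the section $s$, so by the functoriality clause of Theorem~\ref{thm:purity} it is the map of Thom spaces attached to the transversal morphism of smooth pairs $s \colon (Z \subset W) \to (\mathfrak{X}_Z \subset \mathfrak{X})$. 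In particular $\alpha$ covers the base map $s|_Z \colon Z \to \mathfrak{X}_Z$ and is a fiberwise isomorphism, transversality giving $\mathscr{N}_i \cong (s|_Z)^\ast \mathscr{N}_{i'}$ (equivalently $\mathscr{N}_{i'} = (\pi|_{\mathfrak{X}_Z})^\ast \mathscr{N}_i$, since $\mathfrak{X}_Z$ is the pullback of $Z$ along $\pi$). Now $s|_Z$ is a section of $\pi|_{\mathfrak{X}_Z} \colon \mathfrak{X}_Z \to Z$, which is an $\aone$-weak equivalence by hypothesis, so the $2$-out-of-$3$ property makes $s|_Z$ an $\aone$-weak equivalence as well.

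The main step is then to conclude that a fiberwise-isomorphism map of Thom spaces covering an $\aone$-weak equivalence of smooth bases is itself an $\aone$-weak equivalence, i.e.\ that $\alpha$ is a weak equivalence. I expect this homotopy invariance of the Thom space construction to be the crux. Writing $\Th = \mathbb{V}(-)/(\mathbb{V}(-) \setminus 0)$ and comparing the two defining cofiber sequences along $\alpha$, the map on total spaces $\mathbb{V}(\mathscr{N}_i) \to \mathbb{V}(\mathscr{N}_{i'})$ is a weak equivalence by $2$-out-of-$3$ (both bundle projections are weak equivalences and $s|_Z$ is one), so the real content is that the induced map on complements of zero sections is a weak equivalence. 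This is the point that requires more than formal manipulation: one reduces Zariski-locally to the split case with fiber $\A^{n} \setminus 0$, where the statement is clear, and then patches, or one invokes the relevant functoriality of Thom spaces from Morel--Voevodsky.

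An alternative that sidesteps this Thom space bookkeeping is to prove directly that $\pi \colon \mathfrak{X} \to W$ is an $\aone$-weak equivalence and then invoke that $W$ is $\aone$-contractible, whence $\mathfrak{X} \simeq W \simeq \ast$. For this I would apply the gluing square of Remark~\ref{rem:gluing} twice, once to $\mathfrak{X}$ and once to $\mathrm{id}\colon W \to W$, and feed in the map induced by $\pi$, obtaining a commuting square whose horizontal maps $\mathfrak{X} \to i_\ast \mathfrak{X}_Z$ and $W \to i_\ast Z$ are $\aone$-weak equivalences (both from left properness, using that $\pi|_U$ is a weak equivalence). Since $\pi|_Z$ is an $\aone$-weak equivalence and the closed pushforward $i_\ast$ preserves $\aone$-weak equivalences, the remaining vertical map $i_\ast(\pi|_Z)$ is a weak equivalence, and $2$-out-of-$3$ forces $\pi$ to be one. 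Either way the essential input is the homotopy invariance built into the closed--open gluing; the Thom-class bookkeeping in the first route and the behavior of $i_\ast$ in the second are the places that demand care.
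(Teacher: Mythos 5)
Your first route is the paper's own proof in all essentials: the paper likewise reduces to showing that the first map $\Th(\mathscr{N}_{i}) \to \Th(\mathscr{N}_{i'})$ in the cofiber sequence of Theorem~\ref{thm:mainmultiple} is an $\aone$-weak equivalence, and it deduces this from the hypothesis on $\mathfrak{X}_Z \to Z$ by observing that the smooth pairs $(W,Z)$ and $(\mathfrak{X},\mathfrak{X}_Z)$ are weakly excisive in the sense of \cite[Definition 3.17]{HoyoisEquivariant}, invoking properness of the $\aone$-local model structure. So the step you correctly single out as the crux --- that a transversal morphism of smooth pairs covering an $\aone$-weak equivalence induces an $\aone$-weak equivalence of Thom spaces --- is exactly what the paper cites rather than reproves.

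Be warned, however, that your sketched by-hand proof of that crux has a real gap. ``Reduce Zariski-locally to the split case and patch'' fails because $\aone$-weak equivalences are not local on the base: over an open $V \subseteq Z$ trivializing $\mathscr{N}_i$, the map of deleted bundle spaces is $s|_V \times \mathrm{id}_{\A^n \setminus 0}$, where $s|_V \colon V \to (\pi|_{\mathfrak{X}_Z})^{-1}(V)$ is a section of the restriction of $\pi|_{\mathfrak{X}_Z}$; but the hypothesis only makes the \emph{global} map $\mathfrak{X}_Z \to Z$ an $\aone$-weak equivalence, and its restriction over $V$ need not be one, so the local statements you propose to glue are simply unavailable. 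This is precisely why nontrivial input (weak excisiveness, or the Morel--Voevodsky-style Thom functoriality you mention as an alternative) is required; your hedge ``or one invokes the relevant functoriality'' is not optional but is the actual content of the proof.

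Your second route is genuinely different from the paper's proof and, granting its inputs, correct; it is close in spirit to Remark~\ref{rem:gluing}, which the paper itself presents as the conceptual essence of Theorem~\ref{thm:mainmultiple}. The two gluing squares are as you say, and $i_*$ does preserve motivic equivalences (this is established en route to the gluing theorem in \cite{HoyoisEquivariant}), so two-out-of-three shows $\pi$ itself is an $\aone$-weak equivalence and contractibility of $W$ finishes. What this buys: no Thom-space or normal-bundle bookkeeping at all, and it uses neither the section $s$ nor the smoothness of $Z$, so it bears directly on the paper's closing Question about non-smooth degeneration loci. What it costs: the gluing square is only available under the hypotheses of Remark~\ref{rem:gluing} ($B$ qcqs, $W$ finitely presented), whereas the purity-based proof works over an arbitrary base; and one must check that the hypothesis on $\mathfrak{X}_Z \to Z$, stated as an equivalence in $\ho{B}$, yields the equivalence of spaces over $W$ that the gluing square and $i_*$ actually consume --- a base-change-of-site point that, as you anticipated, is one of the places demanding care.
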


\begin{proof}
In the terminology of \cite[Definition 3.17]{HoyoisEquivariant}, the smooth pairs $(W,Z)$ and $(\mathfrak{X},\mathfrak{X}_Z)$ are weakly excisive, so the assumption that $\mathfrak{X}_Z \to Z$ is an $\aone$-weak equivalence implies that the induced map $\Th(\mathscr{N}_{Z/W}) \to \Th(\mathscr{N}_{\mathfrak{X}_Z/\mathfrak{X}})$ is an $\aone$-weak equivalence by properness of the $\aone$-local model structure.
\end{proof}

\subsection{A special case: smooth fibrations over affine spaces}
\label{ss:degenerationover0}
We now reconsider Theorem~\ref{thm:mainmultiple} under more stringent hypotheses on $Z$ and $W$.  In particular, if $W = {\mathbb A}^n_B$ and $Z = 0 \subset {\mathbb A}^n_B$, we may obtain a  more precise description of the $\aone$-homotopy type of $\mathfrak{X}$.  The following result establishes Theorem~\ref{thmintro:onedegeneratefiber} from the introduction.

\begin{thm}
\label{thm:main}
Assume $\mathfrak{X}$ is a $B$-scheme, and $\pi: \mathfrak{X} \to {\mathbb A}^n_B$ is a smooth morphism admitting a section $s$ such that, if $U := \pi^{-1}({\mathbb A}^n \setminus 0)$, then $s|_{{{\mathbb A}^n_B \setminus 0}}: {{\mathbb A}^n_B \setminus 0} \to U$ is an $\aone$-weak equivalence.  If $\mathfrak{X}_0 := \pi^{-1}(0)$, then there is a canonical (pointed) $\aone$-weak equivalence
\[
{\pone}^{\sma n} \wedge (\mathfrak{X}_0,s(0)) \sim (\mathfrak{X},s(0)).
\]
\end{thm}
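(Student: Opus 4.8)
The plan is to recognize this as the special case of Theorem~\ref{thm:mainmultiple} in which $W = \mathbb{A}^n_B$ (pointed by the origin) and $Z = 0$ is the zero section $B \hookrightarrow \mathbb{A}^n_B$, and then to make the two Thom spaces appearing there completely explicit. First I would check the hypotheses of Theorem~\ref{thm:mainmultiple}: the scheme $\mathbb{A}^n_B$ is finitely presented, smooth, pointed, and $\aone$-contractible, while $Z = 0 \cong B$ is finitely presented and smooth and $i\colon 0 \hookrightarrow \mathbb{A}^n_B$ is a closed immersion with complementary open $U = \mathbb{A}^n_B \setminus 0$. The standing hypothesis is phrased in terms of $s|_{\mathbb{A}^n_B \setminus 0}$, but since $\pi|_U \circ s|_{\mathbb{A}^n_B \setminus 0} = \mathrm{id}$ and $s|_{\mathbb{A}^n_B \setminus 0}$ is an $\aone$-weak equivalence, the $2$-out-of-$3$ property shows $\pi|_U$ is an $\aone$-weak equivalence, which is exactly the hypothesis of Theorem~\ref{thm:mainmultiple}. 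Applying that theorem yields a split cofiber sequence
\[
\Th(\mathscr{N}_i) \longrightarrow \Th(\mathscr{N}_{i'}) \longrightarrow \mathfrak{X},
\]
with $i'\colon \mathfrak{X}_0 \hookrightarrow \mathfrak{X}$.

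Next I would identify the two normal bundles. The immersion $i\colon 0 \hookrightarrow \mathbb{A}^n_B$ has trivial normal bundle $\mathscr{N}_i \cong \mathscr{O}_B^{\oplus n}$, so $\Th(\mathscr{N}_i) \cong (\pone)^{\sma n}$. For $i'$, the defining ideal of $\mathfrak{X}_0 = \pi^{-1}(0)$ in $\mathfrak{X}$ is generated by the pullbacks $\pi^\ast x_1, \ldots, \pi^\ast x_n$ of the coordinate functions; since $\pi$ is smooth, hence flat, these form a regular sequence and the conormal sheaf is free of rank $n$ on $\mathfrak{X}_0$. Equivalently, $\mathscr{N}_{i'} \cong \pi_0^\ast \mathscr{N}_i$ for $\pi_0\colon \mathfrak{X}_0 \to B$, so $\mathscr{N}_{i'}$ is constant in the sense of Remark~\ref{rem:trivializations}. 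That remark then gives $\Th(\mathscr{N}_{i'}) \cong (\pone)^{\sma n} \sma (\mathfrak{X}_0)_+$.

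With these identifications the cofiber sequence reads $(\pone)^{\sma n} \to (\pone)^{\sma n} \sma (\mathfrak{X}_0)_+ \to \mathfrak{X}$. The remaining point is that the first map is precisely the split inclusion $\mathrm{id} \sma s(0)$ of Proposition~\ref{prop:basepointedsmash}, with $\mathscr{Y} = (\pone)^{\sma n}$, $\mathscr{X} = \mathfrak{X}_0$, and basepoint $x = s(0)$: indeed, by the functoriality clause of the purity Theorem~\ref{thm:purity}, this map is induced by the transversal morphism of pairs $(0 \subset \mathbb{A}^n_B) \to (\mathfrak{X}_0 \subset \mathfrak{X})$ coming from the section $s$, which on the level of bases is the basepoint inclusion $s(0)\colon B \hookrightarrow \mathfrak{X}_0$, and which is compatible with the chosen trivializations of $\mathscr{N}_i$ and $\mathscr{N}_{i'} = \pi_0^\ast \mathscr{N}_i$. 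Granting this, Proposition~\ref{prop:basepointedsmash} identifies the cofiber of the first map with $\mathfrak{X}_0 \sma (\pone)^{\sma n} = (\pone)^{\sma n} \sma (\mathfrak{X}_0, s(0))$; comparing with the cofiber sequence, whose cofiber is $\mathfrak{X}$, yields the asserted pointed $\aone$-weak equivalence.

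The step I expect to be the main obstacle is the last one: pinning down the first map in the split cofiber sequence on the nose as $\mathrm{id} \sma s(0)$. This requires carefully tracking the functoriality of the purity isomorphism along the section-induced transversal morphism of pairs and checking that the two Thom-space trivializations (of $\mathscr{N}_i$ over $B$ and of $\mathscr{N}_{i'} = \pi_0^\ast \mathscr{N}_i$ over $\mathfrak{X}_0$) are chosen compatibly, so that the induced map of Thom spaces really is the smash of the identity on $(\pone)^{\sma n}$ with the pointed inclusion $s(0)_+\colon S^0 \to (\mathfrak{X}_0)_+$. The rest is bookkeeping with the computations of the normal bundles together with the applications of Remark~\ref{rem:trivializations} and Proposition~\ref{prop:basepointedsmash}.
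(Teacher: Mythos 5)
Your proposal is correct and takes essentially the same route as the paper: the paper likewise specializes Theorem~\ref{thm:mainmultiple} to $W={\mathbb A}^n_B$, $Z=0$, trivializes both Thom spaces via the regular sequence of pulled-back coordinate functions (its ``canonical trivialization''), identifies the left-hand map of the split cofiber sequence as the ${\pone}^{\sma n}$-suspension of the basepoint inclusion $S^0\to(\mathfrak{X}_0)_+$ at $s(0)$, and concludes by Proposition~\ref{prop:basepointedsmash}. The step you flag as the main obstacle is handled in the paper exactly as you suggest, through the compatibility of the two trivializations coming from the transversality/functoriality clause of Theorem~\ref{thm:purity}.
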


\begin{proof}
Since $\pi$ is smooth, it follows that $\mathfrak{X}_0$ is a smooth $B$-scheme.  Moreover, since $\pi$ is flat, by choosing coordinates $t_1,\ldots,t_n$, i.e., an identification ${\mathbb A}^n_B = \Spec \mathscr{O}_B[t_1,\ldots,t_n]$, it follows that $t_1,\ldots,t_n$ is a regular sequence on $\mathfrak{X}$ and $\mathfrak{X}_0$ is defined by the vanishing of $t_1,\ldots,t_n$. In particular, the normal sheaf $\mathscr{N}_{\mathfrak{X}_0/\mathfrak{X}}$ is equipped with a corresponding trivialization (we will call this the canonical trivialization in what follows).  For notational clarity, we henceforth suppress $B$ from the notation (so, e.g., the zero sphere $S^0$ is $B_+$, i.e., a disjoint union of two copies of $B$).

Appealing to Theorem~\ref{thm:mainmultiple} and specializing the notation as necessary, we obtain the cofiber sequence of the form
\[
\Th(\mathscr{N}_{0/{\mathbb A}^n}) \longrightarrow \Th(\mathscr{N}_{\mathfrak{X}_0/\mathfrak{X}}) \longrightarrow \mathfrak{X}.
\]
The canonical trivialization determines an $\aone$-weak equivalence
\[
\Th(\mathscr{N}_{\mathfrak{X}_0/\mathfrak{X}}) \sim {\pone}^{\sma n} \sma (\mathfrak{X}_0)_+
\]
(see \cite[\S 3 Proposition 2.17]{MV} or Remark~\ref{rem:trivializations}).  The restriction of the canonical trivialization of $\mathscr{N}_{\mathfrak{X}_0/\mathfrak{X}}$ to $\mathscr{N}_{0/{\mathbb A}^n}$ yields a corresponding trivialization of that normal sheaf and an identification 
\[
\Th(\mathscr{N}_{0/{\mathbb A}^n}) \cong {\pone}^{\sma n} \wedge S^0 = {\pone}^{\sma n}.
\]

Using the compatibility of these trivializations and the conclusions of the two preceding paragraphs, we see that:
\begin{enumerate}[noitemsep,topsep=1pt]
\item There is a cofiber sequence of the form
\[
{\pone}^{\sma n} \longrightarrow {\pone}^{\sma n} \sma (\mathfrak{X}_0)_+ \longrightarrow \mathfrak{X}.
\]
\item The left-hand morphism in the above cofiber sequence is the map ${\pone}^{\sma n} = {\pone}^{\sma n} \wedge S^0 \to {\pone}^{\sma n} \wedge (\mathfrak{X}_0)_+$ corresponding to suspending the map $S^0 \to (\mathfrak{X}_0)_+$ given by sending the non-basepoint of $S^0$ to $s(0)$, and consequently
\item The left-hand morphism in the above cofiber sequence is split.
\end{enumerate}
Therefore, appealing to Proposition~\ref{prop:basepointedsmash}, there is an $\aone$-weak equivalence
\[
{\pone}^{\sma n} \sma (\mathfrak{X}_0) \isomto \mathfrak{X},
\]
with base-points as stated.
\end{proof}

As a variant of the above result, we may also analyze smooth morphisms $\mathfrak{X} \to \aone_B$ that have multiple degenerate fibers.

\begin{thm}
\label{thm:multipleaonebase}
Assume $\mathfrak{X}$ is a $B$-scheme, and $\pi: \mathfrak{X} \to {\mathbb A}^1_B$ is a smooth morphism admitting a section $s$.  Assume there exists a closed immersion of $B$-schemes $i: Z \hookrightarrow \aone_B$ with open complement $U \subset \aone_B$
having the following properties:
\begin{enumerate}[noitemsep,topsep=1pt]
\item The immersion $i$ is defined by a monic polynomial $g \in \mathscr{O}_B[t]$ that factors as a product of linear factors with roots in $B$.
\item There exists a closed subscheme $\tilde{Z} = B \sqcup \cdots \sqcup B$ having the same open complement as $Z$ such that $\tilde{Z}$ is smooth over $B$.
\item The morphism $\pi: \pi^{-1}(U) \to U$ is an $\aone$-weak equivalence.
\end{enumerate}
Write $b_i$, $i = 1,\cdots,r$ for the inclusion of the $i$-th factor of $B$ in $\tilde{Z}$, $\mathfrak{X}_{b_i}$ for the base-change of $\pi$ along the morphism $B \to \aone_B$ determined by $b_i$, and point $\mathfrak{X}_{b_i}$ by $x_i$, obtained from $b_i$ and $s$.  In that case, there is a pointed $\aone$-weak equivalence
\[
\pone \wedge (\bigvee_{i=1}^r \mathfrak{X}_{b_i}) \sim \mathfrak{X},
\]
where $\mathfrak{X}$ is pointed by any base-point $x_i$.
\end{thm}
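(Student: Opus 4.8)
The plan is to feed the smooth closed subscheme $\tilde{Z}$ --- rather than the possibly non-smooth $Z$ --- into Theorem~\ref{thm:mainmultiple}, and then to decompose the resulting Thom spaces along the connected components of $\tilde{Z}$. First I would record that $\aone_B$ is finitely presented, smooth, and $\aone$-contractible, and that by hypothesis~(2) the closed immersion $\tilde{i}\colon \tilde{Z} = B \sqcup \cdots \sqcup B \hookrightarrow \aone_B$, whose components are the root-sections $b_1,\dots,b_r$ furnished by hypothesis~(1), has open complement exactly $U$. Hypothesis~(3) says $\pi|_U$ is an $\aone$-weak equivalence, so the hypotheses of Theorem~\ref{thm:mainmultiple} are met with $(W,w) = \aone_B$ and $Z$ replaced by $\tilde{Z}$. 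Writing $\tilde{i}'\colon \mathfrak{X}_{\tilde{Z}} \to \mathfrak{X}$ for the base-change of $\tilde{i}$ along $\pi$, that theorem produces a split cofiber sequence $\Th(\mathscr{N}_{\tilde{i}}) \to \Th(\mathscr{N}_{\tilde{i}'}) \to \mathfrak{X}$ in which $\mathfrak{X}$ is the genuine cofiber of the first map.

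Next I would identify the two Thom spaces. Each component $b_i\colon B \hookrightarrow \aone_B$ is cut out by $t - a_i$, so $\mathscr{N}_{\tilde{i}}$ is a trivial line bundle on $\tilde{Z}$; since $\pi$ is smooth, hence flat, the fibre $\mathfrak{X}_{b_i} = \pi^{-1}(a_i)$ is cut out in $\mathfrak{X}$ by the single function $\pi^*(t - a_i)$, so $\mathscr{N}_{\tilde{i}'}$ is a trivial line bundle on $\mathfrak{X}_{\tilde{Z}} = \bigsqcup_{i} \mathfrak{X}_{b_i}$. Because the Thom space of a bundle over a disjoint union is the wedge of the Thom spaces of its restrictions, these trivializations give $\Th(\mathscr{N}_{\tilde{i}}) \cong \bigvee_{i=1}^r \pone$ and $\Th(\mathscr{N}_{\tilde{i}'}) \cong \bigvee_{i=1}^r (\pone \wedge (\mathfrak{X}_{b_i})_+)$.

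The crucial point is to recognise the connecting map under these identifications. As in the proof of Theorem~\ref{thm:main}, the first map of the cofiber sequence is $\Th(\varphi)$ for the transversality isomorphism $\varphi\colon \mathscr{N}_{\tilde{i}} \isomt s^*\mathscr{N}_{\tilde{i}'}$ induced by the section $s$; since $s^*\pi^*(t-a_i) = t - a_i$, the coordinate trivializations above are compatible, so on the $i$-th summand $\Th(\varphi)$ is the suspension $\pone = \pone \wedge S^0 \xrightarrow{\,\mathrm{id}\,\wedge\,x_i\,} \pone \wedge (\mathfrak{X}_{b_i})_+$ of the pointing $S^0 \to (\mathfrak{X}_{b_i})_+$ at the point $x_i$. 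Homotopy cofibers commute with wedges, so applying Proposition~\ref{prop:basepointedsmash} to each summand yields $\mathfrak{X} \simeq \bigvee_{i=1}^r \hocofib(\mathrm{id}\wedge x_i) \simeq \bigvee_{i=1}^r (\pone \wedge \mathfrak{X}_{b_i}) = \pone \wedge (\bigvee_{i=1}^r \mathfrak{X}_{b_i})$, with the stated base-points.

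I expect the main obstacle to be the bookkeeping in this last identification of the connecting map: one must check that the purity and transversality data of Theorem~\ref{thm:mainmultiple} decompose compatibly over the components of $\tilde{Z}$ and that the two coordinate trivializations match under $s$. This is precisely the componentwise version of the compatibility argument already carried out for a single fibre in the proof of Theorem~\ref{thm:main}, so I would organise the write-up to reduce to that case summand by summand. The only remaining point to address is the harmless dependence on the chosen base-point $x_i$, which is immaterial because all the $x_i$ lie in the image of the section $s$ over the $\aone$-contractible base $\aone_B$.
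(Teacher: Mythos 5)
Your proposal is correct and follows essentially the same route as the paper: apply Theorem~\ref{thm:mainmultiple} with the smooth subscheme $\tilde{Z}$ in place of $Z$, trivialize both normal bundles compatibly via the defining equation(s) pulled back along $\pi$ and restricted along $s$, and then split off wedge summands by repeated appeal to Proposition~\ref{prop:basepointedsmash}. The only (immaterial) difference is organizational --- you trivialize componentwise using $t-a_i$ and reduce to the single-fibre compatibility check from the proof of Theorem~\ref{thm:main}, whereas the paper trivializes globally by the separable divisor $\tilde{g}$ and then decomposes $(\sqcup_i \mathscr{Y}_i)_+$ as a wedge.
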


\begin{rem}
Before getting to the proof, let us observe that conditions in the theorem are easy to check when $B$ is an integral affine scheme.  Indeed, in that case, since $g$ is a product of linear factors, we can find a separable polynomial $\tilde{g}$ that divides $g$. The smoothness condition is equivalent to the assertion that the discriminant $\operatorname{disc}(\tilde{g})$ is a unit in each residue field at a closed point.
\end{rem}

\begin{proof}
Write $\mathfrak{X}_{\tilde{Z}}$ for the fiber product of $\tilde{Z}$ and $\mathfrak{X}$ over $\aone_Z$; note that the base-change of $\pi$ along $\tilde{Z} \to Z$ is a smooth morphism, so the fibers $\mathfrak{X}_{b_i}$ corresponding to the components of $\tilde{Z}$ are all smooth schemes.  Choose base-points as in the statement of the theorem.

In that case, we may appeal to Theorem~\ref{thm:mainmultiple}.  In particular, there is a cofiber sequence of the form
\[
\Th(\mathscr{N}_{\tilde{Z}/\aone_B}) \longrightarrow \Th(\mathscr{N}_{\mathfrak{X}_{\tilde{Z}}/\mathfrak{X}_Z}) \longrightarrow \mathfrak{X}.
\]
Note that $\tilde{Z}$ is cut out by a separable polynomial $\tilde{g}$ that divides $g$.  This element also defines a principal divisor on $\mathfrak{X}$ that cuts out $\mathfrak{X}_{\tilde{Z}}$.  It follows that the normal sheaf to each embedding is equipped with a trivialization, and these trivializations are compatible.  Therefore, the above cofiber sequence reads:
\[
\pone \sma \tilde{Z}_+ \longrightarrow \pone \sma (\mathfrak{X}_{\tilde{Z}})_+ \longrightarrow \mathfrak{X}.
\]
Now, if $(\mathscr{Y}_i,y_i)$ is a finite collection of pointed spaces, for any $j \in I$, we may write
\[
(\sqcup_{i \in I} \mathscr{Y}_i)_+ = (\mathscr{Y}_{j})_+ \vee (\sqcup_{i \in I \setminus \{ j \}} \mathscr{Y}_i)_+.
\]
Using this observation, the result follows by repeated appeal to Proposition~\ref{prop:basepointedsmash}.
\end{proof}

\begin{ex} \label{prop:globalthomsebastiani}
Assume that $B = \Spec k$ for some base ring $k$ and let $(X,\ast) \subset {\mathbb A}^{n+1}_k$ is a pointed smooth hypersurface defined by the vanishing of a polynomial $f \in k[z_1,\ldots,z_{n+1}]$ and a $k$-point $\ast$ of $X$.  Let $i:Z\hookrightarrow \mathbb{A}^1_k$ be a hypersurface defined by a polynomial $g = \prod_{i=1}^r (t-a_i)^{b_i}$, $a_i \in k$, $b_i\in \mathbb{Z}_{>0}$  such that the hypersurface $\tilde{Z}$ defined by the polynomial $\tilde{g}= \prod_{i=1}^r(t-a_i)$ is smooth over $k$. 

Then the hypersurface $\mathfrak{X}_g \subset {\mathbb A}^{n+3}_k$ defined by the equation $f - gx = 0$ is smooth over $k$ 
and there is an $\aone$-weak equivalence 
$$\mathfrak{X}_g \sim (\vee_{i=1}^r \pone) \sma (X,\ast).$$
  
Indeed, let $\pi:\mathfrak{X}_g \to \aone_k$ be the morphism induced by the projection onto the $x$-variable. The ring homomorphism $k[t,z_1,\ldots,z_{n+1},x])\to k[t,x]$ factors through a ring homomorphism $k[\mathfrak{X}_g] \to k[t,x]$ which yields a section of $\pi$.  The fibers of $\pi:\mathfrak{X}_g\to \mathbb{A}^1_k$ over the points in $\aone_k$ where $g$ vanishes are isomorphic to the total spaces of line bundles over $X$ (the normal bundles to the embedding of the given component).  In particular, each such fiber is $\aone$-weakly equivalent to $X$. The claimed $\aone$-weak equivalence thus follows from Theorem~\ref{thm:multipleaonebase}, together with the identification $\pone \sma (\vee_{i=1}^r (X,x)) \sim (\vee_{i=1}^r \pone) \sma (X,x)$. 
\end{ex}

\begin{rem}
For this remark, we work over the complex numbers.  If $f({\bf x})$ and $g({\bf y})$ are polynomials in $m$ and $n$ variables, then we write $f \oplus g$ for the polynomial $f({\bf x}) + g({\bf y})$ in $n+m$-variables.  The classical ``global" Thom--Sebastiani theorem \cite[Theorem 2.4]{Nemethi} states that the homotopy type of general fibers of $f \oplus g$ is the join of the homotopy types of the general fiber of $f$ and the general fiber of $g$.  A straightforward analog of this theorem in $\aone$-homotopy theory cannot be true because the $\aone$-homotopy type of the fibers of $f$ can vary rather wildly.  Example~\ref{prop:globalthomsebastiani} provides one version of such a theorem in $\aone$-homotopy theory.  The general fiber of $gt$ is $\Spec k[x,\frac{1}{g}]  = \aone_k \setminus \{x_1,\ldots,x_r\} \subset \aone_k$ and the join of $\aone_k \setminus \{x_1,\ldots,x_r\}$ and $(X,x)$ coincides with the wedge sum of a number of copies of $\pone$ with $(X,x)$ by appeal to the purity isomorphism.
\end{rem}

\section{Examples and applications}
In this section, we put the results of the preceding sections together and study applications.  For example, Section~\ref{ss:deformationspacesmodelsuspensions} shows that iterated $\pone$-suspensions of smooth schemes admit smooth models in a rather great generality.  The remainder of the section is concerned with building other schemes with controlled $\aone$-homotopy types.

\subsection{Deformation spaces model suspensions}
\label{ss:deformationspacesmodelsuspensions}
Suppose $(X,x)$ is a pointed smooth $k$-scheme.  If $Y$ is a smooth $k$-scheme and $i: X \hookrightarrow Y$ is a closed immersion, we will point $Y$ by $i(x)$.  If $\D(X,Y)$ is the deformation space of $i$ (see Example~\ref{ex:deformationspace}), then there is a canonical closed immersion $X \hookrightarrow {N}_{X/Y} \hookrightarrow \D(X,Y)$, where ${N}_{X/Y}$ is the total space of the normal sheaf $\mathscr{N}_{X/Y}$, and we point $\D(X,Y)$ by the image of $x$ under this composite morphism.  Note that the choice of base-point determines a section of the morphism $\D(X,Y) \to \aone$.  With this convention, we may now state the following result.

\begin{thm}
\label{thm:deformationspacesmodelsuspension}
Assume $B$ is a base scheme and $(X,x)$ is a pointed smooth $B$-scheme.  If there exists an $\aone$-contractible smooth $B$-scheme $Y$ and a (pointed) closed immersion $X \hookrightarrow Y$, then there is a pointed $\aone$-weak equivalence:
\[
\Sigma_{\pone} X \longrightarrow \D(X,Y).
\]
\end{thm}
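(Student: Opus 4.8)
The plan is to recognize $\pi \colon \D(X,Y) \to \aone_B$ as a smooth morphism over the affine line to which Theorem~\ref{thm:main} (in the case $n=1$) applies, with degenerate fiber over the origin the normal bundle of $X \hookrightarrow Y$. Write $\pi$ for the structure morphism of the deformation space of Example~\ref{ex:deformationspace}, and let $s$ be the section of $\pi$ determined by the chosen base-point of $\D(X,Y)$, as explained in the paragraph preceding the statement; by construction $s(0)$ is the image of $x$ under the zero section $X \hookrightarrow \mathbb{V}_i \hookrightarrow \D(X,Y)$, while over $\aone_B \setminus 0$ the section $s$ is the constant section at $i(x)$.

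First I would verify the hypothesis of Theorem~\ref{thm:main}, namely that the restriction $s|_{\aone_B \setminus 0} \colon \aone_B \setminus 0 \to U := \pi^{-1}(\aone_B \setminus 0)$ is an $\aone$-weak equivalence. By Corollary~\ref{cor:deformationtothenormalcone}(1), $\pi|_U$ is isomorphic to the projection $Y \times_B (\aone_B \setminus 0) \to \aone_B \setminus 0$, and since $Y$ is $\aone$-contractible this projection is an $\aone$-weak equivalence. As $s|_{\aone_B \setminus 0}$ is a section of $\pi|_U$, the $2$-out-of-$3$ property for $\aone$-weak equivalences forces it to be an $\aone$-weak equivalence as well.

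Next I would identify the degenerate fiber. Corollary~\ref{cor:deformationtothenormalcone}(2) identifies $\mathfrak{X}_0 := \pi^{-1}(0)$ with the total space $\mathbb{V}_i$ of the normal bundle $\mathscr{N}_{X/Y}$, with $s(0)$ the zero-section image of $x$. The vector-bundle projection $\mathbb{V}_i \to X$ is a pointed $\aone$-weak equivalence admitting the zero section as a pointed section, so $(\mathfrak{X}_0, s(0))$ is pointed $\aone$-weakly equivalent to $(X,x)$ via the zero section $(X,x) \to (\mathbb{V}_i, s(0))$.

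Finally, Theorem~\ref{thm:main} with $n = 1$ produces a pointed $\aone$-weak equivalence $\pone \wedge (\mathfrak{X}_0, s(0)) \isomto (\D(X,Y), s(0))$. Precomposing with the map $\pone \wedge (X,x) \to \pone \wedge (\mathfrak{X}_0, s(0))$ obtained by smashing $\pone$ with the zero section yields the desired $\Sigma_{\pone} X = \pone \wedge X \to \D(X,Y)$, which is an $\aone$-weak equivalence as a composite of such. I do not expect a genuine obstacle here: all the homotopical work is packaged in Theorems~\ref{thm:mainmultiple} and~\ref{thm:main} (and ultimately the homotopy purity isomorphism of Theorem~\ref{thm:purity}), and the only point demanding care is the base-point bookkeeping needed to ensure the composite is pointed and runs in the asserted direction.
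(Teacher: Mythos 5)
Your proposal is correct and is essentially the paper's own argument: both verify via Corollary~\ref{cor:deformationtothenormalcone} that $\pi\colon \D(X,Y)\to\aone_B$ satisfies the hypotheses of Theorem~\ref{thm:main} (your explicit $2$-out-of-$3$ check of the condition on $s|_{\aone_B\setminus 0}$ is exactly what the paper leaves implicit), identify the fiber over $0$ with the total space $\mathbb{V}_i$ of $\mathscr{N}_{X/Y}$, and conclude from the $\aone$-weak equivalence between that fiber and $(X,x)$. The only cosmetic difference is that you orient the composite by smashing $\pone$ with the zero section $(X,x)\to(\mathbb{V}_i,s(0))$, producing an actual map $\Sigma_{\pone}X\to\D(X,Y)$ in the asserted direction, whereas the paper uses the bundle projection $\D(X,Y)_0\to X$ and inverts in the homotopy category.
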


\begin{proof}
By appeal to Corollary~\ref{cor:deformationtothenormalcone}, we see that the projection $\D(X,Y) \to \aone_B$ satisfies the hypotheses of Theorem~\ref{thm:main} (the section arises from the base-point).  In that case, we conclude that $\Sigma_{\pone} \D(X,Y)_0 \sim \D(X,Y)$, where $\D(X,Y)_0$ is the scheme-theoretic fiber of the projection over $0$.  However, $\D(X,Y)_0$ is the total space of the normal bundle to the embedding $X \hookrightarrow Y$, and therefore the projection map $\D(X,Y)_0 \to X$ is a (pointed) $\aone$-weak equivalence.  Thus, we conclude that $\Sigma_{\pone} \D(X,Y)_0 \to \Sigma_{\pone} X$ is a (pointed) $\aone$-weak equivalence as well.
\end{proof}

The following result gives a rather general condition when the hypotheses of the preceding theorem are satisfied.

\begin{prop}
\label{prop:embedintoaffines}
If $B$ is a qcqs base scheme, and $X$ is a finitely presented smooth affine $B$-scheme, then there exists a finitely presented smooth $\aone$-contractible $B$-scheme $Y$ and a closed immersion of $B$-schemes $X \to Y$.
\end{prop}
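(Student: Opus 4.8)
The plan is to produce $Y$ as the total space $\mathbb{V}(\mathscr{E})$ of a finite rank locally free sheaf $\mathscr{E}$ on $B$. Any such total space is smooth and finitely presented over $B$, and its structure map $\mathbb{V}(\mathscr{E}) \to B$ is a vector bundle projection, hence an $\aone$-weak equivalence; since $B$ is the final object of $\ho{B}$, this exhibits $\mathbb{V}(\mathscr{E})$ as $\aone$-contractible. It therefore suffices to construct a closed immersion of $B$-schemes $X \hookrightarrow \mathbb{V}(\mathscr{E})$ for some $\mathscr{E}$. When $B$ is affine this is immediate: presenting the coordinate ring of $X$ as a quotient of a polynomial ring over the coordinate ring of $B$ yields a closed immersion into some $\A^N_B = \mathbb{V}(\mathscr{O}_B^{\oplus N})$. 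The content of the proposition is to globalize this over a general qcqs base.

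Set $\mathscr{A} := \pi_*\mathscr{O}_X$, where $\pi: X \to B$ is the affine, finitely presented structure morphism; this is a finitely presented quasi-coherent sheaf of $\mathscr{O}_B$-algebras. Under the relative $\Spec_B$ equivalence, a closed immersion $X \hookrightarrow \mathbb{V}(\mathscr{E})$ is the same datum as a surjection of $\mathscr{O}_B$-algebras $\Sym_{\mathscr{O}_B}(\mathscr{E}^\vee) \twoheadrightarrow \mathscr{A}$, and such a surjection is in turn determined by an $\mathscr{O}_B$-module map $\mathscr{E}^\vee \to \mathscr{A}$ whose image generates $\mathscr{A}$ as an $\mathscr{O}_B$-algebra. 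So the goal reduces to finding a finite rank locally free sheaf $\mathscr{F}$ (to play the role of $\mathscr{E}^\vee$) together with a map $\mathscr{F} \to \mathscr{A}$ that algebra-generates $\mathscr{A}$.

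First I would produce a quasi-coherent submodule $\mathscr{M} \subset \mathscr{A}$ of finite type that already generates $\mathscr{A}$ as an algebra. Cover $B$ by finitely many affine opens $U_1,\dots,U_k$; over each $U_j$ the finitely presented algebra $\mathscr{A}(U_j)$ is generated by finitely many sections. Writing $\mathscr{A}$ as the filtered colimit of its finite-type quasi-coherent submodules and using that $H^0(U_j,-)$ commutes with such filtered colimits (each $U_j$ being quasi-compact and quasi-separated), one such submodule $\mathscr{M}$ contains sections restricting to all of these local generators simultaneously; since generating an algebra is a local condition on $B$, this $\mathscr{M}$ generates $\mathscr{A}$. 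Then, choosing a finite rank locally free sheaf $\mathscr{F}$ with a surjection $\mathscr{F} \twoheadrightarrow \mathscr{M}$ and composing with $\mathscr{M} \hookrightarrow \mathscr{A}$ furnishes the desired generating map, hence the closed immersion $X \hookrightarrow \mathbb{V}(\mathscr{F}^\vee)$.

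The main obstacle is precisely this last step: realizing the finite-type sheaf $\mathscr{M}$ as a quotient of a vector bundle. This is the resolution property, which is automatic when $B$ is affine and holds whenever $B$ admits an ample family of line bundles, but is genuinely delicate for an arbitrary qcqs base. In writing the proof I would invoke the resolution property in whatever form is available and, exploiting the freedom to enlarge $\mathscr{E}$, adapt the choice of generating vector bundle to the line bundles present on $B$; identifying the minimal hypotheses under which the construction goes through is where the real work lies.
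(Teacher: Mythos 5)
Your setup and its first half are sound and, in fact, reproduce the skeleton of the classical construction: passing to $\mathscr{A} = \pi_*\mathscr{O}_X$, translating a closed immersion $X \hookrightarrow \mathbb{V}(\mathscr{E})$ into an algebra surjection $\Sym(\mathscr{E}^\vee) \twoheadrightarrow \mathscr{A}$, and extracting a finite type quasi-coherent submodule $\mathscr{M} \subset \mathscr{A}$ that generates $\mathscr{A}$ as an algebra (via filtered colimits of finite type submodules and a finite affine cover) all work on a qcqs base. But the step you defer at the end is a genuine gap, not a loose end: you propose to ``invoke the resolution property in whatever form is available,'' and for a general qcqs base there is no form available. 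The hypotheses of the proposition allow $B$ to be, for instance, affine $n$-space with doubled origin for $n \geq 2$, which is qcqs (indeed Noetherian and finite type over $\Z$) but, as the paper itself recalls in Example~\ref{ex:affine-space-multiple-origin} following Thomason, does not possess the resolution property. So the surjection $\mathscr{F} \twoheadrightarrow \mathscr{M}$ from a finite rank locally free sheaf that your plan requires need not exist, the fallback of adapting the construction to ``the line bundles present on $B$'' (an ample family) is equally unavailable for such $B$, and the argument cannot be completed as written.

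The paper's proof takes a different route that never asks for a resolution-type statement over $B$ itself: it exploits the finite presentation hypothesis through a limit argument. By the descent results for finitely presented morphisms cited there (Stacks Project, Tag 01Z6), the affine, finitely presented morphism $X \to B$ is pulled back from an affine, finitely presented morphism $X_0 \to B_0$ with $B_0$ Noetherian, i.e.\ $X \cong X_0 \times_{B_0} B$. One then embeds $X_0$ into a vector bundle over $B_0$ --- the paper treats this as classical, gluing embeddings coming from local generators over a finite affine cover of $B_0$ --- and base changes along $B \to B_0$: the pullback of a vector bundle is a vector bundle, closed immersions are stable under base change, and the total space of a vector bundle is smooth, finitely presented, and $\aone$-contractible, exactly as in your opening paragraph. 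The concrete idea missing from your attempt is thus the Noetherian approximation/descent step; your direct construction over $B$ runs head-on into the failure of the resolution property, which is precisely the obstruction the paper's reduction is designed to avoid confronting over the original base.
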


\begin{proof}
In light of the finite presentation hypotheses, there exists a Noetherian scheme $B_0$, a morphism $B \to B_0$ and a finitely-presented affine $B_0$-scheme $X_0$ such that $X = X_0 \times_{B_0} B$ \cite[\href{https://stacks.math.columbia.edu/tag/01Z6}{Tag 01Z6}]{stacks-project} and the structure morphism $X_0 \to B_0$ may also be assumed affine.  In that case, the result is classical: covering $B_0$ by affines, we may glue together embeddings corresponding to local generators to obtain a vector bundle over $B_0$ into which $X_0$ embeds (this is evidently a finitely presented $B$-scheme).  The base-change of this vector bundle to $B$ is again a vector bundle, and thus $X$ comes equipped with a closed immersion into this vector bundle.
\end{proof}

We can also extend Theorem~\ref{thm:deformationspacesmodelsuspension} to the situation where a space admits an embedding into an affine space up to homotopy.  To that end, write $\Sm^{\aff}_B$ for the full subcategory of $\Sm_B$ consisting of schemes that are affine in the absolute sense.  

\begin{defn}
\label{defn:jouanoloudevice}
A {\em Jouanolou device} for $X\in \Sm_B$ consists of a pair $(\tilde{X},\varphi)$ where $\tilde{X} \in \Sm_B^{\aff}$ and $\varphi: \tilde{X} \to X$ is a morphism making $\tilde{X}$ into a torsor under a vector bundle over $X$.
\end{defn}

Jouanolou observed \cite[Lemme 1.5]{Jouanolou} that quasi-projective schemes always possess affine vector bundle torsors; Thomason extended this fact, and we recall these results here.

\begin{prop}[Jouanolou--Thomason homotopy lemma]
\label{prop:jouanolouthomason}
Suppose $B$ is a qcqs  base scheme.
\begin{enumerate}[noitemsep,topsep=1pt]
\item If $X \in \Sm_B$ admits an ample family of line bundles, then $X$ admits a Jouanolou device.
\item If $B$ is furthermore quasi-compact and regular, and $X \in \Sm_B$ has affine diagonal and is quasi-compact over $B$, then $X$ admits a Jouanolou device.
\end{enumerate}
\end{prop}

\begin{proof}
The first assertion is a restatement of \cite[Proposition 4.4]{Weibel} in our context.  For the second assertion, since $X$ has an affine diagonal, the structure morphism is quasi-separated.  Since the structure morphism is quasi-compact by assumption, it follows that $X$ is finitely presented over $B$.

Since $X$ is finitely presented and smooth over $B$, which is regular, we claim that $X$ is regular.  To see this, first, observe that $X$ is automatically quasi-compact.  To check its local rings are regular, we may reduce to the affine case, so $B = \Spec R$ is affine and $X = \Spec S$, so $\varphi: R \to S$ is a smooth ring map with $A$ regular.  In particular, this means that $S$ is a Noetherian $R$-algebra.  Moreover, smoothness implies $\varphi$ is a flat ring map with regular fibers.  Let ${\mathfrak q} \subset S$ be a prime ideal, and let ${\mathfrak p}$ be its pre-image in $R$.  Choose a regular sequence $f_1,\ldots,f_n$ in $R_{{\mathfrak p}}$ that generates ${\mathfrak p}R_{{\mathfrak p}}$.  Since $R \to S$ is flat the image of $f_1,\ldots,f_n$ in $S$ is again a regular sequence.  Moreover, $S_{{\mathfrak p}}/(\varphi(f_1),\ldots,\varphi(f_n))$ is a fiber of $\varphi$ hence regular.  It follows that $S_{{\mathfrak p}}$ is regular, and thus so is $S_{{\mathfrak q}}$.

Finally, since $X$ is regular, it is automatically locally factorial by the Auslander--Buchsbaum theorem \cite[\href{https://stacks.math.columbia.edu/tag/0AG0}{Tag 0AG0}]{stacks-project} (in particular normal).  In that case, appeal to \cite[Proposition 1.3]{BrennerSchroer} implies that $X$ carries an ample family of line bundles.  Then, the second point follows from the first.
\end{proof}

\begin{ex}
\label{ex:affine-space-multiple-origin}
Some separation hypothesis is necessary for Proposition~\ref{prop:jouanolouthomason} to guarantee the existence of a Jouanolou device.  If $n \geq 1$, then write ${\mathbb A}^n_{2 \cdot 0}$ for the quasi-separated $k$-scheme given by the affine $n$-space with a doubled origin; while this scheme fails to be separated, it has affine diagonal if and only if $n = 1$.  

The scheme ${\mathbb A}^1_{2 \cdot 0}$ has a Jouanolou device; in fact, a ``standard" choice of a Jouanolou device is the hypersurface given by $xy = z(1+z)$ in ${\mathbb A}^3_k$.  In more detail, consider the inclusion of $\gm{} \subset \op{SL}_{2}$ as diagonal matrices with determinant $1$.  An explicit computation with invariants identifies the quotient $\op{SL}_2/\gm{}$ with the hypersurface in ${\mathbb A}^3$ defined by the equation $xy = z(1+z)$.  On the other hand, consider the map $\op{SL}_2 \to {\mathbb A}^2 \setminus 0$ corresponding to projection onto the first column; this map is $\gm{}$-equivariant for the action of $\gm{}$ on ${\mathbb A}^2 \setminus 0$ given by $t \cdot (x,y) = (tx,t^{-1}y)$, but is also a $\ga{}$-torsor.  The geometric quotient of $\gm{}$ acting on ${\mathbb A}^2 \setminus 0$ exists as a smooth scheme and is identified with ${\mathbb A}^1_{2 \cdot 0}$ by explicit computation: if we cover ${\mathbb A}^2 \setminus 0$ with the two open sets ${\mathbb A}^2 \setminus \{x = 0\}$ and ${\mathbb A}^2 \setminus \{y = 0\}$, then the function $xy$ is invariant and yields identifications ${\mathbb A}^2 \setminus \{x = 0\}/\gm{} \cong {\mathbb A}^1$ and ${\mathbb A}^2 \setminus 0 \setminus {y = 0}/\gm{} \cong {\mathbb A}^1$.  It follows that the $\ga{}$-torsor $\op{SL}_2 \to {\mathbb A}^2 \setminus 0$ descends to a $\ga{}$-torsor $\op{SL}_2/\gm{} \to {\mathbb A}^1_{2 \cdot 0}$ with the formulas given above.

By contrast, when $n > 1$, the scheme ${\mathbb A}^n_{2 \cdot 0}$ has strictly quasi-affine diagonal\footnote{The condition of having quasi-affine diagonal is equivalent to quasi-separateness because the diagonal morphism of a scheme is always an immersion \cite[\href{https://stacks.math.columbia.edu/tag/01KJ}{Tag 01KJ}]{stacks-project} and quasi-compact immersions are quasi-affine \cite[\href{https://stacks.math.columbia.edu/tag/02JR}{Tag 02JR}]{stacks-project}.} and does not have a Jouanolou device; this is, of course, related to Thomason's observation that such schemes do not possess the resolution property \cite[Exercise 8.6]{ThomasonTrobaugh}.  Indeed, take $k = \Z$ and suppose $\pi: X \to {\mathbb A}^n_{2 \cdot 0}$ is a torsor under a vector bundle.  Since a torsor under a vector bundle on an affine scheme admits a section, hence is a vector bundle, and vector bundles on ${\mathbb A}^n_{\Z}$ are trivial by the Quillen--Suslin theorem, the restriction of $\pi$ over either copy of ${\mathbb A}^n_{\Z}$ is isomorphic to a trivial bundle. On the other hand, since the inclusion ${\mathbb A}^n_{\Z} \setminus 0 \to {\mathbb A}^n_{\Z}$ has complement of codimension $\geq 2$, and ${\mathbb A}^n_{\Z} \setminus 0$ is normal, any isomorphism between the two restrictions $\pi$ over ${\mathbb A}^n_{\Z} \setminus 0$ extends over ${\mathbb A}^n_{\Z}$.  Thus, one obtains a global isomorphism of $X$ with a product ${\mathbb A}^n_{\Z} \times {\mathbb A}^n_{2 \cdot 0}$, in particular, $X$ is not affine.  
\end{ex}

With the Jouanolou--Thomason homotopy lemma in hand, we can extend Theorem~\ref{thm:deformationspacesmodelsuspension} to the situation of schemes that may be embedded in an $\aone$-contractible scheme up to $\aone$-homotopy.  

\begin{cor}
\label{cor:modelsofiteratedsuspension}
Assume $B$ is a regular affine base scheme.
\begin{enumerate}[noitemsep,topsep=1pt]
\item If $(X,x)$ is a quasi-compact smooth $B$-scheme with affine diagonal, then for any integer $i \geq 0$, the iterated $\pone$-suspension $\Sigma^i_{\pone}X$ has the $\aone$-homotopy type of a smooth $B$-scheme as well.
\item If $(X,x)$ is furthermore affine, then $\Sigma^i_{\pone}X$ admits a smooth affine model as well.
\end{enumerate}
\end{cor}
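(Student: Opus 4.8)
The plan is to prove (2) first, by induction on $i$, and then to deduce (1) by replacing $X$ with a Jouanolou device.

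For (2), observe that a regular affine scheme is qcqs, so Proposition~\ref{prop:embedintoaffines} applies to any finitely presented smooth affine $B$-scheme: it produces a closed immersion into a finitely presented smooth $\aone$-contractible $B$-scheme $Y$. Inspecting its proof, $Y$ is a vector bundle over $B$, hence affine because $B$ is. I would then induct on $i$. The base case $i=0$ is immediate, $X$ being a smooth affine model of itself. For the inductive step, suppose $\Sigma^i_{\pone}X$ has a pointed, finitely presented, smooth affine model $X_i$. Embed $X_i$ as a pointed closed subscheme of such a $Y_i$ (pointing $Y_i$ by the image of the base point, as before Theorem~\ref{thm:deformationspacesmodelsuspension}). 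Theorem~\ref{thm:deformationspacesmodelsuspension} then yields a pointed $\aone$-weak equivalence $\Sigma_{\pone}X_i \sim \D(X_i,Y_i)$. Because $X_i$ and $Y_i$ are affine, the affine blow-up $\D(X_i,Y_i)$ is again affine --- concretely, the $\Spec$ of the ring displayed in Example~\ref{prop:equationsZprincipal} --- and Corollary~\ref{cor:deformationtothenormalcone} shows it is smooth and finitely presented over $B$. Since smashing with $\pone$ preserves $\aone$-weak equivalences, $\Sigma^{i+1}_{\pone}X = \Sigma_{\pone}\Sigma^i_{\pone}X \sim \Sigma_{\pone}X_i \sim \D(X_i,Y_i)$, so $X_{i+1}:=\D(X_i,Y_i)$ is the desired model and the induction closes.

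For (1), I would reduce to the affine case by the Jouanolou--Thomason homotopy lemma. A regular affine $B$ is quasi-compact and regular, while $(X,x)$ is quasi-compact over $B$ with affine diagonal, so Proposition~\ref{prop:jouanolouthomason}(2) provides a Jouanolou device $\varphi:\tilde X\to X$ with $\tilde X\in\Sm_B^{\aff}$. Since $\tilde X$ is absolutely affine and $B$ is affine, $\tilde X$ is affine over $B$; being a vector bundle torsor over the smooth, finitely presented $X$, it is smooth and finitely presented over $B$. Moreover $\varphi$ is an $\aone$-weak equivalence, whence $\Sigma^i_{\pone}\tilde X\sim\Sigma^i_{\pone}X$ for all $i$. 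Applying (2) to $\tilde X$ produces a smooth affine model of $\Sigma^i_{\pone}\tilde X$, and hence of $\Sigma^i_{\pone}X$ (so even the stronger affine conclusion holds here).

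The step demanding the most care --- and the main obstacle --- is the bookkeeping of base points, so that all the equivalences above are genuinely pointed. In (2) each $X_i$ is canonically pointed by the deformation-space base point recalled before Theorem~\ref{thm:deformationspacesmodelsuspension}, compatibly with the section of $\D(X_i,Y_i)\to\aone$ on which Theorem~\ref{thm:main} relies. In (1) I must lift $x:B\to X$ along the torsor $\varphi$. Pulling $\tilde X\to X$ back along $x$ gives a torsor under a vector bundle over the affine scheme $B$; such a torsor admits a section, since its class lies in the vanishing first cohomology of a quasi-coherent sheaf on an affine scheme. This produces a lift $\tilde x:B\to\tilde X$ of $x$, and with this choice $\varphi:(\tilde X,\tilde x)\to(X,x)$ is a pointed $\aone$-weak equivalence, so the reduction in (1) is pointed and the argument goes through.
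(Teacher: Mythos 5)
Your proposal is correct and follows essentially the same route as the paper: reduce to the affine case via the Jouanolou--Thomason lemma (Proposition~\ref{prop:jouanolouthomason}), embed into an $\aone$-contractible smooth affine scheme via Proposition~\ref{prop:embedintoaffines}, and iterate Theorem~\ref{thm:deformationspacesmodelsuspension}, noting that $\D(X,Y)$ is smooth affine when $Y$ is; even your base-point lift (pulling the torsor back along $x$ and using that a vector bundle torsor over an affine base admits a section) matches the paper's argument. The only difference is cosmetic ordering --- you prove (2) first and deduce (1), whereas the paper reduces (1) to the affine case and runs the induction once.
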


\begin{proof}
Under the assumptions on $B$, $X$ satisfies the hypotheses of Proposition~\ref{prop:jouanolouthomason}.  In particular, there exists a smooth $B$-scheme $\tilde{X}$ and vector bundle torsor $\tilde{X} \to X$ with affine total space $\tilde{X}$. The morphism $\tilde{X} \to X$ is an $\aone$-weak equivalence.

Since $(X,x)$ is pointed, we have a morphism $x: \Spec B \to X$.  If we base-change $\tilde{X}$ along $x$, we obtain an affine vector bundle torsor $\tilde{B} \to B$.  Since $B$ is affine by assumption, $\tilde{B} \to B$ is isomorphic to a vector bundle over $B$; fix such an isomorphism, and define a morphism $\tilde{x}: B \to \tilde{B}$ to be the image of the zero section under the isomorphism.  Composing this section with the canonical morphism $\tilde{B} \to \tilde{X}$ from the fiber product, we see that $(\tilde{X},\tilde{x}) \to (X,x)$ is a pointed $\aone$-weak equivalence.  Therefore, without loss of generality, we may replace $(X,x)$ by $(\tilde{X},\tilde{x})$ and assume that $X$ is affine.

In that case, fix a closed embedding $\iota: X \hookrightarrow Y$ for $Y$ some $\aone$-contractible smooth $B$-scheme; such an embedding exists by appeal to Proposition~\ref{prop:embedintoaffines}.  Granted this observation, the first point follows immediately by inductive appeal to Theorem~\ref{thm:deformationspacesmodelsuspension}.  Note also that if $Y$ is a smooth affine $B$-scheme, then the deformation space $\D(X,Y)$ is automatically a smooth affine $B$-scheme as well, so the second point follows also.
\end{proof}

This following observation generalizes \cite[Theorem 2]{AsokDoranFasel}. 
\begin{cor}
\label{ex:quadrics}
For every  integer $n \geq 0$, ${\pone}^{\sma n}$ and $\gm{} \sma {\pone}^{\sma n}$ have the $\aone$-homotopy type of smooth {\em separated} $\Z$-schemes. 
\end{cor}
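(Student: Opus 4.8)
The plan is to exhibit both families as iterated $\pone$-suspensions of \emph{affine} smooth $\Z$-schemes and then to invoke the affine half of Corollary~\ref{cor:modelsofiteratedsuspension}, which upgrades a ``smooth model'' to a ``smooth \emph{affine} model'', and hence to a ``smooth \emph{separated} model''. Throughout I take $B = \Spec\Z$, which is regular and affine, so the standing hypotheses of Corollary~\ref{cor:modelsofiteratedsuspension} hold. The case $n = 0$ is immediate, since $\pone^{\sma 0} = S^0 = (\Spec\Z)_{+} = \Spec\Z \sqcup \Spec\Z$ and $\gm{}\sma\pone^{\sma 0} = \gm{}$ are already smooth separated (in fact affine) $\Z$-schemes; so I assume $n \geq 1$.

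For the family $\gm{}\sma\pone^{\sma n}$, I would start from $X = \gm{} = \Spec\Z[t,t^{-1}]$ with its base point $t = 1$, a smooth affine $\Z$-scheme. By definition of the suspension functor one has $\Sigma^{n}_{\pone}\gm{} = \pone^{\sma n}\sma\gm{} = \gm{}\sma\pone^{\sma n}$, so Corollary~\ref{cor:modelsofiteratedsuspension}(2) with $i = n$ produces a smooth affine model of $\gm{}\sma\pone^{\sma n}$, which is in particular separated. No further input is needed here because $\gm{}$ is affine to begin with.

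The family $\pone^{\sma n}$ is the delicate one. The natural presentation $\pone^{\sma n} = \Sigma^{n-1}_{\pone}\pone$ realizes it as a suspension of $\pone$, which is \emph{not} affine; applied directly, Corollary~\ref{cor:modelsofiteratedsuspension}(1) would only yield a smooth model, with no control on separatedness. I would therefore first replace $\pone$ by an affine model. Since $\pone$ is a smooth projective $\Z$-scheme it is quasi-compact with affine diagonal, so Proposition~\ref{prop:jouanolouthomason}(2) supplies a Jouanolou device $\varphi\colon\tilde{\pone}\to\pone$ with $\tilde{\pone}$ smooth affine; base-changing $\varphi$ along a chosen rational point $\infty$ of $\pone$ and using that a vector-bundle torsor over the affine base $\Spec\Z$ has a section, one obtains a base point $\tilde{x}$ of $\tilde{\pone}$ making $(\tilde{\pone},\tilde{x})\to(\pone,\infty)$ a pointed $\aone$-weak equivalence (this is exactly the pointing argument used in the proof of Corollary~\ref{cor:modelsofiteratedsuspension}). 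One could equally take the explicit smooth affine model $\SL_2/\gm{} = \{xy = z(1+z)\}\subset\A^{3}_{\Z}$ of Example~\ref{ex:affine-space-multiple-origin}, which is $\aone$-equivalent to the affine line with doubled origin and hence to $\pone$. Now
\[
\Sigma^{n-1}_{\pone}\tilde{\pone}\;\sim\;\Sigma^{n-1}_{\pone}\pone = \pone^{\sma n},
\]
and applying Corollary~\ref{cor:modelsofiteratedsuspension}(2) to the smooth affine scheme $\tilde{\pone}$ with $i = n-1$ yields a smooth affine, hence separated, model of $\pone^{\sma n}$.

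The only genuine obstacle is the separatedness clause: the existence of a smooth model is immediate from Corollary~\ref{cor:modelsofiteratedsuspension}(1), but promoting it to a \emph{separated} model forces us to remain in the affine setting of part~(2), which is precisely why the passage from $\pone$ to an affine (pointed) Jouanolou device is required. Everything else is bookkeeping: the definitional identities $\Sigma^{n-1}_{\pone}\pone = \pone^{\sma n}$ and $\Sigma^{n}_{\pone}\gm{} = \gm{}\sma\pone^{\sma n}$, and the verification that $\Spec\Z$ is regular affine, that $\pone$ is quasi-compact with affine diagonal, and that $\gm{}$ and $\tilde{\pone}$ are smooth affine.
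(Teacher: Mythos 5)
Your proposal is correct, and the $\gm{}\sma{\pone}^{\sma n}$ half coincides with the paper's argument; but for ${\pone}^{\sma n}$ you take a genuinely different route. The paper never desuspends only to $\pone$: it writes ${\pone}^{\sma n} = \Sigma^n_{\pone} S^0_{\Z}$, observing that $S^0_{\Z} = \Spec \Z[t]/(t(1-t))$ is itself a smooth \emph{affine} $\Z$-scheme, explicitly closed-immersed in ${\mathbb A}^1_{\Z}$ (and similarly $\gm{} \hookrightarrow {\mathbb A}^2_{\Z}$ via $t_1t_2 = 1$), so a single application of Corollary~\ref{cor:modelsofiteratedsuspension}(2) finishes both cases at once, with no appeal to the Jouanolou--Thomason lemma. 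You instead use ${\pone}^{\sma n} = \Sigma^{n-1}_{\pone}\pone$, note that $\pone$ is not affine, and repair this by invoking Proposition~\ref{prop:jouanolouthomason}(2) to replace $\pone$ by a pointed Jouanolou device before applying Corollary~\ref{cor:modelsofiteratedsuspension}(2) with $i = n-1$. This is valid --- your pointing argument via a section of the torsor over the affine base $\Spec\Z$ is exactly right --- but note that it replicates a step already performed \emph{inside} the proof of Corollary~\ref{cor:modelsofiteratedsuspension}: that proof begins by replacing any $X$ satisfying the hypotheses of Proposition~\ref{prop:jouanolouthomason} with a pointed affine torsor, so the machinery you add by hand is the machinery the corollary already encapsulates. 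What each approach buys: yours saves one suspension and illustrates that non-affine inputs can be fed to the affine clause after a torsor replacement (your alternative explicit device $\SL_2/\gm{} = \{xy = z(1+z)\}$, $\aone$-equivalent to ${\mathbb A}^1_{2\cdot 0} \sim \pone$, is also fine); the paper's desuspension all the way to $S^0$ is more elementary, avoids Jouanolou--Thomason entirely, and, since the embeddings are explicit, the iterated deformation spaces can be written down by equations --- recovering the quadrics $Q_{2n}$ of \cite[Theorem 2]{AsokDoranFasel}, as Example~\ref{ex:quadricsagain} makes precise.
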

\begin{proof} The ring homomorphism $\Z[t] \to \Z[t]/(t(1-t))$ defines a closed immersion of smooth schemes $S^0_{\Z} \to {\mathbb A}^1_{\Z}$.  Likewise, the ring homomorphism $\Z[t_1,t_2] \mapsto \Z[t_1,t_2]/(t_1t_2-1)$ defines a closed immersion of smooth schemes $\gm{} \to {\mathbb A}^2_{\Z}$. The assertion then follows from Corollary~\ref{cor:modelsofiteratedsuspension}. 
\end{proof}

\begin{rem}
Note that no separation hypothesis is imposed on objects of $\Sm_B$ in the construction of $\ho{B}$.  If we do not restrict our attention to smooth {\em separated} $B$-schemes,  affine $n$-space with doubled origin provides a model for ${\pone}^{\sma n}$ as a smooth $B$-scheme.  Indeed, the homotopy colimit of the diagram
\[
{\mathbb A}^n  \longleftarrow {\mathbb A}^n \setminus 0\longrightarrow {\mathbb A}^n
\]
coincides with the homotopy pushout of ${\mathbb A}^n \setminus 0 \to {\mathbb A}^n$ (contract the ${\mathbb A}^n$ on the left to a point), i.e., $\Th(\mathscr{N}_{0/{\mathbb A}^n})$, which becomes $\aone$-weakly equivalent to ${\pone}^{\sma n}$ after fixing a basis of the tangent space at $0$.
More generally, the affine space $n$-space with $m$-fold origin provides a model for $\bigvee^{m-1} {\pone}^{\sma n}$.  
\end{rem}

\begin{question}
Fix an infinite field $k$, and a smooth affine $k$-scheme $X$ of dimension $d$.  If $i > 0$ is an integer, what is the minimum dimension of a smooth affine model of $\Sigma^i_{\pone}X$?  
\end{question}

\begin{rem}
If $k$ is an infinite field, and $X$ is a smooth affine $k$-scheme of dimension $d$, then it is well-known that $X$ can be embedded as a closed subscheme of ${\mathbb A}^{2d+1}$ and that this bound is optimal \cite[Theorem 5.8]{BlochMurthySzpiro}. Given the choice of one such closed embedding $X\hookrightarrow {\mathbb A}^{2d+1}$, Theorem \ref{thm:deformationspacesmodelsuspension} provides a model $\D(X,{\mathbb A}^{2d+1})$ of $\Sigma_{\pone} X$ of dimension $2d+2$.   
\end{rem}

\subsection{Highly $\mathbb{A}^1$-connected hypersurfaces}
In this section, we analyze a variation of a construction due to Danielewski \cite{Danielewski, Fieseler} to produce hypersurfaces that are ``highly $\aone$-connected".  Indeed, up to change of variables, Danielewski studied the varieties $x^ny = z(1-z)$ as $n$ varies.  In the context of $\aone$-homotopy theory, these varieties are all Jouanolou devices for the affine line with doubled origin ${\mathbb A}^1_{2 \cdot 0}$ discussed previously in Example \ref{ex:affine-space-multiple-origin}, and Danielewski was interested in analyzing their isomorphism types (these varieties are all stably isomorphic).  The second author considered a significant generalization of the Danielewski construction in \cite{DuboulozDF,DuboulozDanielewskiVarieties} and the version we analyze here can be viewed as axiomatizing some of the key properties considered in those papers.

\subsubsection*{A family of generic $\aone$-weak equivalences}
\begin{construction}
\label{construction:degenerationoverafpaffine}
Suppose $k$ is a base ring.  Assume $Y \subset {\mathbb A}^n_k$ is defined by a finitely generated ideal $I = (f_1,\ldots,f_r) \subset k[x_1,\ldots,x_n]$.  Fix an integer $s$, and suppose $a_1,\ldots,a_s \in k[x_1,\ldots,x_n]$ are polynomials.  
For ${\bf a} = (a_1,\ldots,a_s)$ we define
\[
X_{I,{\bf a}} = \{ \sum_i t_i f_i = \prod_{j=1}^s(z - a_j) \} 
\subset 
{\mathbb A}^{n+r+1}_k .
\]
Let $\pi: X_{I,{\bf a}}\rightarrow \mathbb{A}^n_k$ be the morphism defined by the inclusion of $k[x_1,\ldots,x_n]$ into the coordinate ring of $X_{I,{\bf a}}$.
\end{construction}

\begin{prop}
\label{prop:degenerationoverasubschemeexample}
In the setting of \textup{Construction~\ref{construction:degenerationoverafpaffine}}, assume that $X_{I,{\bf a}}$ is flat over $k$ and that the functions $\{f_i\}_{i=1,\ldots,r},\{ a_j\}_{j=1,\ldots,s}$ are not zero-divisors.  Then the following statements hold about the morphism $\pi: X_{I,{\bf a}} \to {\mathbb A}^n_k$.
\begin{enumerate}[noitemsep,topsep=1pt]
\item The restriction of $\pi$ to ${\mathbb A}^n_k \setminus Y$ is a Zariski locally trivial smooth morphism with affine space fibers.
\item The morphism $\pi$ admits a section, and if we set $Y' = \Spec k[x_1,\ldots,x_n,z]/(\prod_{i=1}^s(z - a_i),I)$, the base-change of $\pi$ along the inclusion $Y \hookrightarrow {\mathbb A}^n_k$ factors as
    \[
    {\mathbb A}^n_{Y'} \longrightarrow Y' \longrightarrow Y.
    \]
\item The morphism $\pi$ is finitely presented and faithfully flat.
\item The morphism $\pi$ is smooth if and only if $Y' \to Y$ is \'etale.
\item If the morphism $\pi$ is smooth, then it factors through a smooth morphism $\pi': X_{I,{\bf a}} \to {\mathbb A}^n_{sY}$ where the target is the non-separated scheme obtained by gluing $s$ copies of ${\mathbb A}^n_k$ with the identity map along ${\mathbb A}^n_k \setminus Y$.
\end{enumerate}
\end{prop}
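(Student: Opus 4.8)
The strategy is to prove the five assertions essentially in order, repeatedly exploiting the explicit equation $\sum_i t_i f_i = \prod_j (z-a_j)$ and the hypothesis that the $f_i$ and $a_j$ are non-zero-divisors. Throughout, write $A = k[x_1,\ldots,x_n]$ and $R = A[t_1,\ldots,t_r,z]/(\sum_i t_if_i - \prod_j(z-a_j))$ for the coordinate ring of $X_{I,\mathbf{a}}$, so $\pi$ corresponds to $A \hookrightarrow R$.

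\emph{Part (1).} Over the open complement ${\mathbb A}^n_k \setminus Y$, at least one $f_i$ is invertible locally; on the principal open set $D(f_i)$ one solves $t_i = f_i^{-1}\bigl(\prod_j(z-a_j) - \sum_{\ell \neq i} t_\ell f_\ell\bigr)$, so $t_i$ is determined by the remaining coordinates. This exhibits the restriction of $R$ over $D(f_i)$ as a polynomial ring in $z, t_1,\ldots,\widehat{t_i},\ldots,t_r$, i.e. an affine-space bundle; gluing over the cover $\{D(f_i)\}$ of ${\mathbb A}^n_k \setminus Y$ gives a Zariski-locally trivial smooth morphism with affine-space fibers.

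\emph{Parts (2)--(3).} A section of $\pi$ is produced by sending each $t_i \mapsto 0$ and $z \mapsto a_1$ (say), which visibly satisfies the defining equation; this is the ring map $R \to A$ that I would write down explicitly. For the factorization of the base-change, I would set $z$ to be free and observe that the fiber over $Y = V(I)$ is cut out by $I$, so $\prod_j(z-a_j) = \sum_i t_i f_i$ becomes $\prod_j(z-a_j) \equiv 0$ modulo $I$; the coordinate ring of $X_{I,\mathbf{a}} \times_{{\mathbb A}^n_k} Y$ is then a polynomial ring in the $t_i$ over $\mathscr{O}(Y')$, giving the claimed ${\mathbb A}^n_{Y'} \to Y' \to Y$. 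Faithful flatness and finite presentation: $R$ is a quotient of the polynomial ring $A[t_1,\ldots,t_r,z]$ by the single non-zero-divisor $\sum_i t_if_i - \prod_j(z-a_j)$ (here the non-zero-divisor hypothesis on the $f_i, a_j$ guarantees this element is a non-zero-divisor in the polynomial ring over $A$), hence $R$ is a complete intersection and flat over $A$; combined with the existence of the section, which makes $\pi$ surjective, flatness upgrades to faithful flatness, and finite presentation is immediate from the explicit presentation.

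\emph{Parts (4)--(5).} For part (4), since $\pi$ is already flat and smooth away from $Y$ by part (1), smoothness of $\pi$ is equivalent to smoothness along the fiber $\pi^{-1}(Y)$; using the fiberwise criterion and the description in part (2), the fiber ${\mathbb A}^n_{Y'} \to Y$ is smooth precisely when $Y' \to Y$ is smooth of relative dimension $0$, i.e. \'etale — this is the content I would make precise via \cite[\href{https://stacks.math.columbia.edu/tag/01V9}{Tag 01V9}]{stacks-project} on fiberwise smoothness. For part (5), assuming $\pi$ smooth, the composite $X_{I,\mathbf{a}} \to {\mathbb A}^n_k$ restricted over $Y$ factors through $Y'$, which by part (4) is \'etale of degree $s$ over $Y$; the $s$ sections over ${\mathbb A}^n_k \setminus Y$ (the preimages of the $s$ ``sheets'') together with the \'etale splitting over $Y$ assemble into a map to the scheme ${\mathbb A}^n_{sY}$ obtained by gluing $s$ copies of ${\mathbb A}^n_k$ along ${\mathbb A}^n_k \setminus Y$, and I would check this map $\pi'$ is smooth by verifying it is smooth over each glued copy. \textbf{The main obstacle} will be part (5): one must carefully construct the gluing map to the non-separated target ${\mathbb A}^n_{sY}$ and verify that the $s$ sheets, which coalesce set-theoretically over $Y$ but are separated by $Y' \to Y$ being \'etale, glue correctly — the bookkeeping of how the section and the \'etale structure of $Y' \to Y$ interact to produce a \emph{well-defined} morphism into the multiply-origined scheme is the delicate point, whereas parts (1)--(4) are direct consequences of the explicit equation.
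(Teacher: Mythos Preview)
Your plan for parts (1), (2), and (4) matches the paper's approach closely and is correct.  There are, however, two points where your proposal falls short of a proof.

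\textbf{Flatness in (3).}  The implication ``$g=\sum_i t_if_i-\prod_j(z-a_j)$ is a non-zero-divisor in $A[t_1,\dots,t_r,z]$, hence $R=A[t,z]/(g)$ is flat over $A$'' is false in general: take $A=k[x]$ and $g=x\in A[t]$.  What actually makes flatness immediate here is that $g$ is (up to sign) \emph{monic in $z$}, so $A[t_1,\dots,t_r,z]/(g)$ is free of rank $s$ over $A[t_1,\dots,t_r]$, hence free over $A$.  The paper instead invokes the local criterion for flatness \cite[\href{https://stacks.math.columbia.edu/tag/046Z}{Tag 046Z}]{stacks-project}: after localizing, one checks that the image of $g$ in each fibre $\kappa(\mathfrak p)[t,z]$ is a non-zero-divisor (which again uses monicness in $z$).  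Either route works, but your stated reason does not.

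\textbf{Construction in (5).}  Your description of the map to ${\mathbb A}^n_{sY}$ is off: there are not ``$s$ sheets over ${\mathbb A}^n_k\setminus Y$'' --- over that open set $\pi$ is a single ${\mathbb A}^r$-bundle.  The $s$ sheets appear only \emph{over} $Y$, via the \'etale cover $Y'\to Y$.  The paper builds $\pi'$ concretely from an open cover of the source: for each $j$ set $E_j=V\bigl(f_1,\dots,f_r,\prod_{i\neq j}(z-a_i)\bigr)$ and $X_j=X_{I,\mathbf a}\setminus E_j$.  Then $\{X_j\}_{j=1}^{s}$ is an open cover of $X_{I,\mathbf a}$, each restriction $\pi|_{X_j}\colon X_j\to{\mathbb A}^n_k$ is smooth, and for $j\neq j'$ one has $X_j\cap X_{j'}=\pi^{-1}({\mathbb A}^n_k\setminus Y)$.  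These data are exactly a morphism to the glued scheme ${\mathbb A}^n_{sY}$, smooth because it is so on each chart.  Once you see this cover, the ``bookkeeping'' you worry about disappears.
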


\begin{proof} Put $S=k[x_1,\ldots,x_n,t_1,\ldots,t_r,z]$, $f=\sum_i t_if_i-\prod_j(z-a_j)\in S$ and  $R=S/(f)$. 

 1. By assumption, $f_i$ is not a zero-divisor. Inverting any $f_i$ yields an isomorphism $$S[f_i^{-1}]\cong k[x_1,\ldots, x_n,f_i^{-1}][t_1,\ldots,\hat{t}_i,\ldots,t_r,z],$$ showing that the restriction of $\pi$ to the principal open set $D(f_i)$ of $X_{I,{\bf a}}$ is thus isomorphic to the trivial $\mathbb{A}^{r}$-bundle over the principal open set  $D(f_i)$ of $\mathbb{A}^n$. This provides an explicit local trivialization of $\pi$ over ${\mathbb A}^n_k \setminus Y$. 

 2.  Fix an integer $j \in \{ 1,\ldots,s\}$ and observe that sending $t_i, i = 1,\ldots,r$ to $0$ and setting $z = a_j$ defines a section of $\pi$.  
The factor ring $S'= k[x_1,\ldots,x_n,t_1,\ldots,t_r, z]/(\prod_{i=1}^s z-a_i,I)$  of $S$ defines the closed subscheme scheme $Y'\subset X_{I,{\bf a}}$. By definition, we have a commutative square of the form
\[
\xymatrix{
Y' \ar[r]\ar[d] & X_{I,{\bf a}} \ar[d]^{\pi} \\
Y \ar[r] & {\mathbb A}^n_k,
}
\]
where the horizontal morphisms are closed immersions.The restriction of the section $\pi$ to $Y$ yields a section $Y \to Y'$. The base-change of $\pi$ along $Y$ factors as
\begin{equation}
\label{eqn:factorization}
{\mathbb A}^n_{Y'} \longrightarrow Y' \longrightarrow Y,
\end{equation}
where ${\mathbb A}^n_{Y'}$ is the spectrum of the polynomial ring in the variables $t_1,\ldots,t_r$ over $S'$. 

3. That $\pi$ is surjective is immediate because it admits a section.  That $\pi$ has finite presentation is immediate from the definitions.  Thus, it remains to check that $\pi$ is flat.  Since $z$ and the $t_i$ are not zero-divisors and the $f_i$ and $a_i$ are not zero-divisors by assumption,  it follows that $f = \sum_i f_it_i  - \prod_{i=1}^s (z - a_i)$  is not a zero-divisor in $R$. By assumption $X_{I,{\bf a}}$ is flat over $k$, which means that $S$ is a flat $k$-module. Let $A$ be the localization of $k[x_1,\ldots,x_n]$ at a prime ideal ${\mathfrak p}$ and let $B$ be the localization of $R$ at a prime ideal $\mathfrak{q}$ lying above ${\mathfrak p}$. Note that by construction, $B$ is essentially finitely presented over $A$, and $A \to B$ is a flat ring map. Let $m$ be the maximal ideal of $A$. Since $f$ is not a zero-divisor in $R$, its image in $B/{\mathfrak m}B$ is not a zero-divisor. By appeal to \cite[\href{https://stacks.math.columbia.edu/tag/046Z}{Tag 046Z}]{stacks-project}, we conclude that $B/(f)$ is flat over $A$.  Since $A$ and $B$ were arbitrary, it follows that $\pi$ is flat.

4. Since $\pi$ is flat and of finite presentation by assumption, to check it is smooth, it suffices to check that its fibers are smooth.  This statement is immediate over points in ${\mathbb A}^n_k$ that are contained in the complement of $Y$, so it suffices to check smoothness over points of $Y$.  To this end, consider the factorization of $\pi$ from \eqref{eqn:factorization}.  If $Y'$ is \'etale over $Y$, then this composite is evidently smooth, which shows that $\pi$ is smooth. Conversely, if $\pi$ is smooth, then the composite morphism in \eqref{eqn:factorization} is smooth as well.  The first morphism in that factorization is always smooth and surjective.  The morphism $Y' \to Y$ is evidently of finite presentation.  Therefore, \cite[\href{https://stacks.math.columbia.edu/tag/02K5}{Tag 02K5}]{stacks-project} shows that $Y' \to Y$ must also be smooth.  In that case, considerations of relative dimension imply that $Y' \to Y$ must also have relative dimension $0$, in which case it is automatically \'etale. 

5. Under the assumption that $\pi$ is smooth, $Y' \to Y$ is \'etale by the preceding point.  We build the required factorization by gluing.  The statement is tautological if $s = 1$, so assume $s \geq 2$.  In that case, define $E_j$ to be the closed subscheme of $X_{I,{\bf a}}$ defined by $f_1 = \cdots = f_r = \prod_{i \neq j} (z-a_i) = 0$.  Set $X_j = X_{I,{\bf a}} \setminus E_j$.  Note that $\{ X_j \}_{j = 1,\ldots,s}$ forms an open cover of $X_{I,{\bf a}}$.  The restriction of $\pi$ to $X_j$ defines a smooth morphism $\pi_j: X_j \to {\mathbb A}^n_k$.  For any $j \neq j'$, the intersection $X_j \cap X_{j'}$ is $\pi^{-1}({\mathbb A}^n_k \setminus Y)$, so the restrictions of $\pi_j$ and $\pi_{j'}$ to $X_{j} \cap X_{j'}$ coincide with the restriction of $\pi$ to ${\mathbb A}^n_k \setminus Y$. It follows that the morphisms $\pi_j$ glue to yield the morphism $ \pi': X_{I,{\bf a}} \longrightarrow {\mathbb A}^n_{sY},$
which is smooth by construction.
\end{proof}

In the case where $I$ is a principal ideal, the $\aone$-homotopy type of the varieties from Proposition~\ref{prop:degenerationoverasubschemeexample} is relatively straightforward to identify.

\begin{lem}
\label{lem:Yprincipalaffinecover}
Assume $k$ is a normal Noetherian domain such that $Pic(k) = 0$ (e.g., $k$ is a UFD).  Consider the scheme $X_{I,{\bf a}}$ from \textup{Proposition~\ref{prop:degenerationoverasubschemeexample}}, where $I = (f)$ is principal and assume that $s \geq 2$.  Let $X_j := X_{I,{\bf a}} \setminus E_j$, where $E_j$ is the closed subscheme defined by the ideal $(I,\prod_{i \neq j}(z-a_i))$. Then the following hold:
\begin{enumerate}[noitemsep,topsep=1pt]
\item The scheme $X_j$ is isomorphic to ${\mathbb A}^{n+1}_k$.
\item If the morphism $\pi: X_{I,{\bf a}} \to {\mathbb A}^n_k$ is smooth, then the projection morphism $\pi': X_{I,{\bf a}} \to {\mathbb A}^n_{sY}$ is a Jouanolou device (in particular an $\aone$-weak equivalence).
\item If ${\mathbb A}^n \setminus Y$ has a $k$-point, then there is a (pointed) $\aone$-weak equivalence $X_{I,{\bf a}} \sim \bigvee_{i=1}^{s-1}(\Sigma {\mathbb A}^n \setminus Y)$.  If $Y_{red}$ is furthermore smooth over $\Spec k$, then $X_{I,{\bf a}} \sim (\vee_{i=1}^{s-1} \pone) \sma (Y_{red})_+$.
\end{enumerate}
\end{lem}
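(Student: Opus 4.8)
The plan is to establish the three parts in order: the first two by explicit affine computations and the third by a homotopy-colimit argument built on the earlier results of the paper. For Part (1), set $g_j := \prod_{i \neq j}(z - a_i)$, so that the defining equation of $X_{I,\mathbf{a}}$ reads $tf = (z - a_j)g_j$. On $\{f \neq 0\}$ the function $(z - a_j)/f$ is regular, on $\{g_j \neq 0\}$ the function $t/g_j$ is regular, and the defining equation forces the two to agree on the overlap; since $E_j = V(f,g_j)$, the opens $\{f \neq 0\}$ and $\{g_j \neq 0\}$ cover $X_j = X_{I,\mathbf{a}} \setminus E_j$, so they glue to a regular function $w$ on $X_j$. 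I would then check that $(x_1,\ldots,x_n,w)\colon X_j \to \mathbb{A}^{n+1}_k$ is an isomorphism by exhibiting the inverse $(x,w)\mapsto (x,t,z)$ with $z = a_j + fw$ and $t = g_j w$ (with $g_j$ re-expressed through $z = a_j + fw$), and verifying that it factors through $X_{I,\mathbf{a}}$ and lands in the open subscheme $X_j$.

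For Part (2), assume $\pi$ is smooth. On the overlap $X_j \cap X_{j'} = \pi^{-1}(\mathbb{A}^n_k \setminus Y)$ the coordinates from Part (1) satisfy $w_j = w_{j'} + (a_{j'}-a_j)/f$, a fibrewise translation by a regular function on $\mathbb{A}^n_k \setminus Y$. Hence the charts $X_j \cong \mathbb{A}^{n+1}_k$ glue, compatibly with the projections $(x,w)\mapsto x$, to present the morphism $\pi'\colon X_{I,\mathbf{a}} \to \mathbb{A}^n_{sY}$ of Proposition~\ref{prop:degenerationoverasubschemeexample}(5) as a torsor under the trivial line bundle. Since $X_{I,\mathbf{a}}$ is a hypersurface in $\mathbb{A}^{n+2}_k$, hence affine, the pair $(X_{I,\mathbf{a}},\pi')$ is a Jouanolou device for $\mathbb{A}^n_{sY}$ in the sense of Definition~\ref{defn:jouanoloudevice}; in particular $\pi'$ is an $\aone$-weak equivalence, so $X_{I,\mathbf{a}} \sim \mathbb{A}^n_{sY}$.

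For Part (3), write $U := \mathbb{A}^n_k \setminus Y$ and point everything by the given $k$-point $u_0 \in U$, which lies in every chart. By \cite[Theorem 2.21]{MV} the scheme $\mathbb{A}^n_{sY}$, being $s$ copies of $\mathbb{A}^n_k$ glued along $U$, is the pointed homotopy colimit of that diagram; adjoining one chart at a time, passing from $\mathbb{A}^n_{mY}$ to $\mathbb{A}^n_{(m+1)Y}$ is the pointed homotopy pushout of $\mathbb{A}^n_{mY} \leftarrow U \to \mathbb{A}^n_k$. As $\mathbb{A}^n_k$ is $\aone$-contractible, Lemma~\ref{lem:leftproper} identifies this with $\hocofib(U \to \mathbb{A}^n_{mY})$, and the inclusion $U \to \mathbb{A}^n_{mY}$ is pointed null-homotopic since it factors through a contractible chart. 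Because the cofibre of a pointed null map $A \to B$ is $B \vee \Sigma A$, an induction (with $\mathbb{A}^n_{1Y} = \mathbb{A}^n_k \sim \ast$) yields $\mathbb{A}^n_{sY} \sim \bigvee_{i=1}^{s-1}\Sigma U$, whence $X_{I,\mathbf{a}} \sim \bigvee_{i=1}^{s-1}\Sigma(\mathbb{A}^n_k \setminus Y)$ by Part (2). For the refinement, when $Y_{red}$ is smooth I apply homotopy purity (Theorem~\ref{thm:purity}) to the smooth pair $Y_{red} \subset \mathbb{A}^n_k$: contractibility of $\mathbb{A}^n_k$ turns the cofibre sequence $U \to \mathbb{A}^n_k \to \Th(\mathscr{N}_{Y_{red}/\mathbb{A}^n_k})$ into an equivalence $\Sigma U \sim \Th(\mathscr{N}_{Y_{red}/\mathbb{A}^n_k})$. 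The normal bundle is a line bundle, and it is trivial because $\Pic(\mathbb{A}^n_k) \cong \Pic(k) = 0$, so $\Th(\mathscr{N}_{Y_{red}/\mathbb{A}^n_k}) \cong \pone \sma (Y_{red})_+$ by Remark~\ref{rem:trivializations}; distributing $\sma$ over $\vee$ gives $X_{I,\mathbf{a}} \sim (\bigvee_{i=1}^{s-1}\pone) \sma (Y_{red})_+$.

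I expect two points to demand the most care. In Part (1), verifying that the candidate inverse lands in $X_j$ rather than meeting $E_j$ forces the factors $z - a_i$ to be pairwise distinct along $Y = V(f)$; by Proposition~\ref{prop:degenerationoverasubschemeexample}(4) this is exactly the smoothness of $\pi$, so this innocuous step is where that hypothesis is genuinely used (for coincident $a_i$ one finds $X_j \not\cong \mathbb{A}^{n+1}_k$ in general). The principal difficulty, however, is the homotopy-type computation in Part (3): organizing the $s$-fold gluing as an iterated reduced mapping cone and extracting the wedge of $s-1$ suspensions with the correct pointed bookkeeping. It is precisely here that the rational point of $\mathbb{A}^n_k \setminus Y$ is indispensable, since without a basepoint the splitting of the cofibre of the null map, and hence the reduced suspensions appearing in the statement, are unavailable.
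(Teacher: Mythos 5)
Your proof is correct in substance, and in parts (1) and (2) it takes a genuinely different route from the paper. The paper argues abstractly: it first shows that every Zariski locally trivial $\aone$-bundle over ${\mathbb A}^n_k$ is trivial, via the nonabelian cohomology sequence for $1 \to \ga \to \operatorname{Aut}(\aone_k) \to \Gm \to 1$, using $H^1_{\Zar}({\mathbb A}^n_k,\ga)=0$ and Bass--Murthy homotopy invariance to get $\Pic({\mathbb A}^n_k)=\Pic(k)=0$; it then invokes the Kambayashi--Wright theorem (all fibers of $\pi_j$ are $\aone$ over a normal Noetherian base, hence $\pi_j$ is Zariski locally trivial) to conclude $X_j\cong{\mathbb A}^{n+1}_k$, and part (2) follows by gluing. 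Your explicit coordinate $w$, glued from $(z-a_j)/f$ and $t/g_j$, together with its explicit polynomial inverse, replaces both ingredients at once, and your transition computation $w_j-w_{j'}=(a_{j'}-a_j)/f$ even sharpens the paper's ``torsor under a line bundle'' to a $\mathbb{G}_a$-torsor; notably, your argument for (1)--(2) never uses the hypotheses on $k$, which in the paper are consumed by Kambayashi--Wright and Bass--Murthy. Your closing observation about (1) is also accurate and repairs an imprecision in the statement: without smoothness of $\pi$ (equivalently, the $a_i$ pairwise distinct along $Y$, by Proposition~\ref{prop:degenerationoverasubschemeexample}(4)) the candidate inverse meets $E_j$, and (1) genuinely fails --- e.g.\ for $n=1$, $f=x$, $a_1=1$, $a_2=1+x$ one finds $X_1\cong\Gm\times\aone$ --- and the paper's own proof silently needs the same hypothesis, since the fibers of $\pi_j$ over the collision locus are empty, so Kambayashi--Wright does not apply. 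Part (3) is essentially the paper's argument (the homotopy colimit of $s$ copies of $U\to\ast$ is $\bigvee^{s-1}\Sigma U$), organized as a clean chart-by-chart induction with the pointedness bookkeeping made explicit.

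The one step you must tighten is the triviality of the normal bundle in the refinement of (3): $\mathscr{N}_{Y_{red}/{\mathbb A}^n_k}$ is a line bundle on $Y_{red}$, not on ${\mathbb A}^n_k$, and $\Pic(Y_{red})$ need not vanish (Remark~\ref{rem:isomorphismtype} deliberately uses $Y_{red}$ with large Picard group), so ``trivial because $\Pic({\mathbb A}^n_k)=0$'' is a non sequitur as written. The repair is short: $Y_{red}\hookrightarrow{\mathbb A}^n_k$ is a regular immersion of codimension $1$, hence an effective Cartier divisor, so $\mathscr{N}_{Y_{red}/{\mathbb A}^n_k}\cong\mathcal{O}_{{\mathbb A}^n_k}(Y_{red})|_{Y_{red}}$ is the restriction of a line bundle on the ambient space, which is trivial because $\Pic({\mathbb A}^n_k)\cong\Pic(k)=0$ by Bass--Murthy --- the only place your proof actually uses the hypotheses on $k$. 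Once repaired, your argument is in fact more robust than the paper's ``explicit trivialization given by $f$'', which is only literally correct when $f$ generates the radical ideal of $Y$: if $f$ has a repeated factor, its image in $\mathscr{I}/\mathscr{I}^2$ vanishes and trivializes nothing.
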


\begin{proof}
We first note that under our assumptions, every Zariski locally trivial $\mathbb{A}^1$-bundler over ${\mathbb A}^n_k$ is globally trivial. Indeed, recall that the automorphism group $Aut(\aone_k)$ is the affine group $\gm{}_k\ltimes\ga{}_k$. Since ${\mathbb A}^n_k$ is affine, $H^1_{\Zar}({\mathbb A}^n_k,\ga{}) = 0$ and since $k$ is Noetherian and normal, homotopy invariance of Picard groups \cite[Corollary 5.10]{BassMurthy} guarantees that $H^1_{\Zar}({\mathbb A}^n_k,\gm{})=Pic({\mathbb A}^n_k) = Pic(\Spec k)$ which is trivial by assumption. Therefore, the long exact sequence in non-abelian cohomology \cite[Proposition III.3.3.1]{Giraud} attached to the short exact sequence of sheaves of groups
\[
1 \longrightarrow \mathbb{G}_{a,k} \longrightarrow Aut(\aone_k) \longrightarrow \gm{}_k \longrightarrow 1
\]
allows us to conclude that $H^1_{\Zar}({\mathbb A}^n_k,Aut(\aone_k))$ is also trivial.

1. By the observation above, it suffices to show that $\pi_j$ is a Zariski locally trivial $\aone_k$-bundle.  By construction, the morphism $\pi_j: X_j \to {\mathbb A}^n_k$ is a Zariski locally trivial $\aone$-bundle over ${\mathbb A}^n \setminus Y$.  Likewise, the morphism $\pi_j^{-1}(Y) \to Y$ is a {\em trivial} $\aone$-bundle, by the factorization from Proposition~\ref{prop:degenerationoverasubschemeexample}. All fibers of $\pi$ are thus isomorphic to $\aone_k$.  Since ${\mathbb A}^n_k$ is a normal Noetherian scheme, \cite[Theorem]{KambayashiWright} implies that $\pi$ is a Zariski locally trivial $\aone_k$-bundle.

2. The fact that $\pi'$ is a torsor under a line bundle follows immediately from gluing and the fact that the morphism $\pi_j: X_j \to {\mathbb A}^n_k$ is a trivial  $\aone_k$-bundle by the preceding point.  

3. This follows from the previous point and a computation of the homotopy colimit of the diagram consisting of $s$ maps from ${\mathbb A}^n \setminus Y$ to $\ast$.  Indeed, the homotopy colimit of $s$-maps from any (pointed) scheme $W$ to $\ast$ is a wedge sum of $s-1$ copies of $\Sigma W$.  Then, the cofiber sequence ${\mathbb A}^n_k \setminus Y \to {\mathbb A}^n_k \to {\mathbb A}^n_k/({\mathbb A}^n_k \setminus Y)$ and $\aone$-contractibility of ${\mathbb A}^n_k$ shows that ${\mathbb A}^n_k/({\mathbb A}^n_k \setminus Y) \sim \Sigma {\mathbb A}^n_k \setminus Y$.  If $Y_{red}$ is smooth, then $Y_{red} \to {\mathbb A}^n_k$ is a regular immersion, and the normal bundle to this embedding has an explicit trivialization given by $f$.  In that case, homotopy purity implies that $\Sigma {\mathbb A}^n_k \setminus Y \cong \pone \sma (Y_{red})_+$.
\end{proof}

\begin{rem}
	\label{rem:isomorphismtype}
Assuming $Y_{red}$ is smooth, Lemma~\ref{lem:Yprincipalaffinecover} gives complete control of the $\aone$-homotopy type of $X_{I,{\bf a}}$.  Thus, for example, if $f$ and $f'$ define smooth plane curves in ${\mathbb A}^2$ whose Picard groups are non-isomorphic and $a_0 = 0, a_1 = 1$, then the affine surfaces
\[
X_{(f),{\bf a}}=\{ft-z(z-1)=0\} \quad \textrm{and}\;\;\;\;\;\;\; X_{(f'),{\bf a}}=\{f't-z(z-1)=0\}
\] 
are not isomorphic.  Note, however, that the isomorphism type of $X_{I,{\bf a}}$ does depend in general on the {\em scheme structure} of $Y$ as well.  For example, take $a_0 = 0, a_1 = 1$, and let $f \subset \aone_k$ be given by $x^r$.  In that case, $X_{I,{\bf a}}$ is the Danielewski hypersurface $x^r t = z(z-1)$ whose isomorphism type is known to be determined by $r$  \cite{Crachiola,Danielewski,DuboulozDF,Fieseler}.
\end{rem}

\begin{ex}\label{ex:quadricsagain}
	Suppose $B = \Spec k$ where $k$ is an integral domain.  Assume $P(z) = \prod_{i=1}^s (z-a_i)$ with the $a_i$ pairwise distinct elements of $k$ and furthermore assume that $\operatorname{disc}(P)$ is a unit in every residue field at a maximal ideal of $k$.  Given a vector ${\bf m} = (m_1,\ldots,m_n)$, $m_i \in \Z_{>0}$, consider the scheme
	\[
	Q_{{\bf m},P} := \{ \sum_{i=1}^n x_i^{m_i} t_i = P(z) \}.
	\]
	In the setting of Construction \ref{construction:degenerationoverafpaffine}, we have $Q_{{\bf m},P}=X_{I,{\bf a}}$ for $I=(x_1^{m_1},\ldots, x_n^{m_n})$ and $\mathbf{a}=(a_1,\ldots, a_s)$. The condition on $P(z)$ guarantees that  $\Spec k[z]/P$ is smooth over $\Spec k$, hence that $Q_{{\bf m},P}$ is a smooth $k$-scheme. Since the corresponding scheme $Y_{red}$ is simply the origin in $\mathbb{A}^n_k$, we obtain by Remark \ref{rem:highercodim-homotopy-type} that 
	\[
	Q_{{\bf m},P} \sim  (\vee_{i=1}^{s-1} {\pone}^{\sma c}) \sma (Y_{red})_+ \sim ({\pone}^{\sma n})^{\vee deg P - 1}.
	\]
	Observe that these varieties are $\aone$-$(n-1)$-connected smooth affine $2n$-folds.  The problem of isomorphism classification is thus reminiscent of (a non-compact version of) that studied in \cite{Wall}.
	
	By an evident change of variables, we may assume that $P(z)$ is of the form $zQ(z)$.  When $Q(z) = (1-z)$ and ${\mathbf m} = (1,\ldots,1)$, the variety $Q_{{\bf m},P}$ is the quadric $Q_{2n}$ whose $\aone$-homotopy type was studied in \cite{AsokDoranFasel} and the above construction gives another view of the proof of \cite[Theorem 2]{AsokDoranFasel}.  When $Q(z)$ is general and ${\mathbf m} = (1,1)$, the variety $Q_{{\bf m},P}$ was studied implicitly in \cite[Proof of Corollary 3.1]{ADBundle} as a generalization of a construction of Winkelmann. 
\end{ex}	
	
The following question can be viewed as a concrete generalization of the discussion of Remark~\ref{rem:isomorphismtype} in the context of Example~\ref{ex:quadricsagain}.

\begin{question}
In the situation of \textup{Example~\ref{ex:quadricsagain}}, if $P$ is fixed, then can one distinguish non-isomorphic varieties of the form $Q_{{\bf m},P}$ as ${\bf m}$ varies?
\end{question}

\begin{rem} For specific integral domains $k$, the condition on $P(z)$ in Example \ref{ex:quadricsagain} can be rather stringent. For example, if $k = \Z$, {\em no} polynomial of degree $> 2$ satisfies the hypotheses.  Indeed, given $P$ as in the statement of degree $\geq 3$, there exist $2$ roots in $\Z$ that must differ by $\geq 2$.  The standard expression of the discriminant as a product of differences of roots then shows that $\operatorname{disc}{P}$ takes values in $\Z \setminus \{ -1,0,1\}$.  As such, there exists a prime $p$ such that $\Spec k[z]/P$ has bad reduction modulo $p$.
\end{rem}

\subsection{Constructing $\mathbb{A}^1$-contractible smooth schemes}
The discussion of the preceding section also gives a way to produce many new examples of $\aone$-contractible strictly quasi-affine schemes generalizing the analysis from \cite[Theorem 3.1.1]{AsokDoranFasel}. These provide new instances of exotic $\aone$-contractible schemes (i.e.,  not isomorphic to affine k-spaces). 

\begin{prop}
\label{prop:highercodimensionexamples}
Consider the morphism $\pi: X_{I,{\bf a}} \to {\mathbb A}^n_k$ from \textup{Proposition~\ref{prop:degenerationoverasubschemeexample}} where $I \subset k[x_1,\ldots,x_n]$ has height $\geq 2$, the morphism $\pi$ is smooth and $Y_{red}$ is smooth over $k$.  Define $E_j$ to be the closed subscheme corresponding to the ideal $(I,\prod_{i \neq j}(z-a_i))$ and set $X_{j} := X_{I,{\bf a}} \setminus E_j$.
\begin{enumerate}[noitemsep,topsep=1pt]
\item The scheme $X_j$ is a strictly quasi-affine $\aone$-contractible smooth $k$-scheme.
\item The morphism $\pi': X_{I,{\bf a}} \to {\mathbb A}^n_{sY}$ is an $\aone$-weak equivalence.
\end{enumerate}
\end{prop}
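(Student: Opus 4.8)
The plan is to identify each $X_j$ explicitly as an open subscheme of an affine space and then use the codimension hypothesis to deduce both that it is strictly quasi-affine and that it is $\aone$-contractible; the morphism $\pi'$ is then handled by the gluing computation already carried out in Lemma~\ref{lem:Yprincipalaffinecover}(3), suitably adapted to higher codimension. First I would analyze the open set $X_j = X_{I,{\bf a}} \setminus E_j$. On $X_j$ the factor $\prod_{i \neq j}(z - a_i)$ together with $I$ does not simultaneously vanish; the key point is that off $E_j$ the defining equation $\sum_i t_i f_i = \prod_i (z - a_i)$ can be solved to express the fiber structure, and the restriction $\pi_j : X_j \to {\mathbb A}^n_k$ realizes $X_j$ as a torsor under a vector bundle (of rank $r$) over ${\mathbb A}^n_k$, trivial away from $Y$ and trivial over $Y'$ by the factorization in Proposition~\ref{prop:degenerationoverasubschemeexample}(2). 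Since ${\mathbb A}^n_k$ is affine, such a torsor is a vector bundle, hence $\aone$-weakly equivalent to ${\mathbb A}^n_k$, which is $\aone$-contractible; this gives the $\aone$-contractibility in part (1).

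For the assertion that $X_j$ is \emph{strictly} quasi-affine (i.e., quasi-affine but not affine), I would argue as follows. The scheme $X_{I,{\bf a}}$ is affine by construction, and $X_j$ is an open subscheme, hence quasi-affine. The obstruction to affineness comes precisely from the height hypothesis on $I$: the complement $E_j$ is cut out by $(I, \prod_{i \neq j}(z - a_i))$, and since $\op{ht}(I) \geq 2$ the exceptional locus $E_j$ has codimension $\geq 2$ in $X_{I,{\bf a}}$. The main obstacle — and the step requiring the most care — is establishing non-affineness rigorously: the cleanest route is to show that removing a closed subset of codimension $\geq 2$ from a normal affine variety yields a non-affine scheme, because any regular function on $X_j$ extends across $E_j$ by normality (Hartogs/algebraic Hartogs' lemma), so $\Gamma(X_j, \mathscr{O}) = \Gamma(X_{I,{\bf a}}, \mathscr{O})$; were $X_j$ affine it would then equal $X_{I,{\bf a}}$, contradicting $E_j \neq \emptyset$. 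I must therefore verify that $X_{I,{\bf a}}$ is normal (which follows from smoothness of $\pi$ together with regularity of the base, as in the Jouanolou--Thomason argument of Proposition~\ref{prop:jouanolouthomason}) and that $E_j$ is nonempty of codimension $\geq 2$, the latter being a direct consequence of $\op{ht}(I) \geq 2$ and $Y_{red}$ being smooth.

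For part (2), the morphism $\pi'$ fits into the same framework as Lemma~\ref{lem:Yprincipalaffinecover}(2)--(3): the open cover $\{X_j\}_{j=1,\ldots,s}$ descends $\pi$ to the morphism $\pi' : X_{I,{\bf a}} \to {\mathbb A}^n_{sY}$ onto the scheme obtained by gluing $s$ copies of ${\mathbb A}^n_k$ along ${\mathbb A}^n_k \setminus Y$ (Proposition~\ref{prop:degenerationoverasubschemeexample}(5)). Each $X_j$ is $\aone$-contractible by part (1), and the restriction of $\pi'$ over the $j$-th chart is the $\aone$-weak equivalence $X_j \to {\mathbb A}^n_k$. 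Since $\aone$-weak equivalences can be checked locally on a Nisnevich (indeed Zariski) cover of the target, and the maps agree on overlaps (both restricting to $\pi|_{{\mathbb A}^n_k \setminus Y}$), I would conclude by a Mayer--Vietoris / gluing-square argument (using left properness as in Remark~\ref{rem:gluing}) that $\pi'$ is an $\aone$-weak equivalence. The only subtlety here, compared with the principal case, is that we no longer claim $X_j \cong {\mathbb A}^{n+1}_k$ — only that it is $\aone$-contractible — but this weaker statement is exactly what the local-to-global criterion for $\aone$-weak equivalences requires.
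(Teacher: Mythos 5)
There is a genuine gap, and it sits at the heart of your part (1): your claim that $\pi_j \colon X_j \to {\mathbb A}^n_k$ is a torsor under a vector bundle over ${\mathbb A}^n_k$ is false, and in fact it contradicts the very non-affineness you prove two sentences later. A torsor under a vector bundle over an affine base admits a section (the obstruction lives in coherent $H^1$, which vanishes on affines), hence is isomorphic to the total space of a vector bundle and is in particular \emph{affine}; but your Hartogs argument correctly shows $X_j$ is not affine when $\op{ht}(I) \geq 2$. This is precisely the phenomenon the paper highlights in the remark following the proposition: in the height $\geq 2$ case the morphism $\pi'$ (and hence each $\pi_j$) is \emph{not} an affine morphism, and the Bass--Connell--Wright/Kambayashi--Wright local-triviality mechanism used in Lemma~\ref{lem:Yprincipalaffinecover}(1) is special to the principal case. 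The correct route to $\aone$-contractibility, and the one the paper takes, avoids any local triviality claim over all of ${\mathbb A}^n_k$: $\pi_j$ is smooth, surjective, and admits a section (Proposition~\ref{prop:degenerationoverasubschemeexample}(2), taking $t_i = 0$, $z = a_j$); over ${\mathbb A}^n_k \setminus Y$ it is a Zariski-locally trivial ${\mathbb A}^r$-bundle, hence an $\aone$-weak equivalence; and by construction of $E_j$ --- which removes all but the $j$-th component of $\pi^{-1}(Y) \cong {\mathbb A}^r_{Y'}$ --- the base-change of $\pi_j$ along $Y \hookrightarrow {\mathbb A}^n_k$ is the projection ${\mathbb A}^r_Y \to Y$, again an $\aone$-weak equivalence. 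Since $Y_{red}$ is smooth, Corollary~\ref{cor:multiplea1contractibles} (i.e., the degeneration-over-a-subscheme machinery of Theorem~\ref{thm:mainmultiple}) then yields $\aone$-contractibility of $X_j$ directly. Your Hartogs argument for strict quasi-affineness is essentially the intended one (the paper states it without proof), though note that normality of $X_{I,{\bf a}}$ requires a hypothesis on $k$ such as normality --- smoothness of $\pi$ alone does not supply it, since the proposition only assumes $k$ is a base ring.

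Your part (2) is close in spirit to the paper but inherits the gap in part (1), and one formulation needs repair: ``$\aone$-weak equivalences can be checked locally on a Nisnevich (indeed Zariski) cover of the target'' is not a valid general principle, since $\aone$-localization does not commute with homotopy pullbacks. What does work --- and is what the paper says --- is that both $X_{I,{\bf a}}$ and ${\mathbb A}^n_{sY}$ are homotopy colimits of the Zariski-cover diagrams formed by their charts and intersections (these squares are homotopy cocartesian in the Nisnevich-local structure), $\pi'$ is a map of such diagrams, and it is objectwise an $\aone$-weak equivalence: on the $j$-th chart it is $\pi_j$ (a weak equivalence by the corrected part (1)), and on every intersection it is the restriction of $\pi$ over ${\mathbb A}^n_k \setminus Y$, a Zariski-locally trivial ${\mathbb A}^r$-bundle --- a justification you should make explicit rather than only noting that the restrictions agree. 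With those two repairs your outline converges to the paper's proof.
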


\begin{proof}
Proposition~\ref{prop:degenerationoverasubschemeexample} implies that
the morphism $\pi_j$ is smooth and surjective.  Moreover, by the construction of $E_j$, the base-change of $\pi_j$ along the closed immersion $Y \to {\mathbb A}^n_k$ is simply the projection morphism ${\mathbb A}^n_{Y} \to Y$ because $E_j$ consists precisely of all but one of the components of ${\mathbb A}^n_{Y'}$.  In particular, the morphism $\pi_j^{-1}(Y) \to Y$ is always an $\aone$-weak equivalence.  Corollary~\ref{cor:multiplea1contractibles} thus guarantees that $X_j$ is $\aone$-contractible if $Y_{red}$ is smooth. Note that $X_j$ is quasi-affine but not affine under the hypothesis that $I$ has height $\geq 2$. 

For the second statement, observe that each of the morphisms $\pi_j$ is an $\aone$-weak equivalence.  Since $\pi'$ is the pushout of all the morphisms $\pi_j$ along their intersections, it follows that $\pi'$ is also an $\aone$-weak equivalence, which establishes the second point.  
\end{proof}

\begin{rem} \label{rem:highercodim-homotopy-type}
Since  $Y_{red}$ is smooth, we may apply homotopy purity to describe the $\aone$-homotopy type of ${\mathbb A}^n_{sY}$ along the lines of the final statement in Lemma~\ref{lem:Yprincipalaffinecover}.  The corresponding statement is slightly more complicated because the normal bundle to $Y_{red}$ in ${\mathbb A}^n_k$ need not be trivial.  Of course, if $Y_{red} \to {\mathbb A}^n_k$ is a codimension $c$ complete intersection, then the normal bundle comes equipped with a trivialization and then $$X_{I,{\bf a}} \sim {\mathbb A}^n_{sY} \sim (\vee_{i=1}^{s-1} {\pone}^{\sma c}) \sma (Y_{red})_+.$$
\end{rem}

\begin{rem}
In contrast with the situation for principal ideals $I$ considered in Lemma \ref{lem:Yprincipalaffinecover} (1), Proposition~\ref{prop:highercodimensionexamples}(1) implies that when $I$ has height $\geq 2$, the morphism $\pi': X_{I,{\bf a}} \to {\mathbb A}^n_{sY}$ is not an affine morphism.  This observation has consequences for a higher-dimensional variant of the Danielewski fiber product construction.  Indeed, suppose $I' \subset k[x_1,\ldots,x_n]$ is another ideal corresponding to a closed subscheme $Y' \subset {\mathbb A}^n_k$, and assume $Y'_{red} = Y_{red}$.  Choose ${\bf a}'$ such that $X_{I',{\bf a}'} \to {\mathbb A}^n_k$ is smooth.  In that case, we may form the fiber product $X_{I,{\bf a}} \times_{{\mathbb A}^n_{sY}} X_{I',{\bf a}'}$.  One may show that either projection is an $\aone$-weak equivalence, again by appealing to Corollary~\ref{cor:multiplea1contractibles}.  Nevertheless, neither projection is affine since $\pi'$ is smooth and surjective, and the affineness of a morphism is local in the fppf topology on the base. As a consequence, neither projection can be Zariski locally trivial.  
\end{rem}

\begin{ex}
With the same notation as in Proposition~\ref{prop:highercodimensionexamples}, assuming that $k$ is a normal domain, then the isomorphism type of $X_j$ is closely tied to the stable isomorphism type of $Y$.  Indeed, take two $\aone$-contractibles as above, say $X$ and $X'$, defined by subschemes $Y$ and $Y'$ (together with the choices of corresponding $a_i$ and $a_i'$).  In that case, the normality assumption guarantees that an isomorphism between $X$ and $X'$ extends to the hypersurfaces $\bar{X}$ and $\bar{X}'$ in which $X$ and $X'$ are open subschemes, and then restricts to an isomorphism $Y \times {\mathbb A}^r \cong Y' \times {\mathbb A}^r$.  Thus, if $Y$ and $Y'$ are not stably isomorphic, the varieties $X$ and $X'$ are not isomorphic.  Since non-isomorphic smooth curves of genus $g > 0$ are never stably isomorphic, by choosing non-isomorphic smooth curves of genus $g > 0$ in ${\mathbb A}^3$, we may produce many non-isomorphic $\aone$-contractible smooth schemes.  This produces many (e.g., positive dimensional moduli spaces) non-isomorphic strictly quasi-affine $\aone$-contractible smooth schemes of dimension $d \geq 4$.  
\end{ex}

\begin{question}
Assume $k$ is a base ring.  If $Y_{red}$ is not necessarily smooth, then is $X_j$ still an $\aone$-weak equivalence?  More generally, if $\pi: \mathfrak{X} \to {\mathbb A}^n_k$ is a smooth morphism of $k$-schemes whose fibers over closed points are affine spaces, is $\pi$ an $\aone$-weak equivalence?
\end{question}

\subsection{Topological contractibility revisited}
In \cite[Conjecture 5.3.11]{AsokOstvaer}, the first and third authors conjectured that if $X$ is a topologically contractible smooth complex affine variety, then $\Sigma^{n}_{\pone} X$ is $\aone$-contractible for some $n$ sufficiently large and that $n = 2$ was probably sufficient.  In view of Theorem~\ref{thm:deformationspacesmodelsuspension}, this conjecture can be made significantly more precise; the following proposition provides a non-conjectural result inspired by this circle of ideas.

\begin{thm}
\label{thm:aonecontractiblehypersurfaces}
Assume $k$ is a field having characteristic $0$, and $(X,x)$ is a pointed $H\Z$-acyclic smooth $k$-affine variety.
\begin{enumerate}[noitemsep,topsep=1pt]
	\item For any integer $N \geq 2$, any smooth model of ${\pone}^{\sma N} \sma X$ is an $\aone$-contractible smooth scheme; if $X$ is furthermore $\aone$-connected, then $N = 1$ suffices.
	\item If $X$ is a topologically $\Z$-acyclic smooth complex surface, then for any integer $N \geq 2$, ${\pone}^{\sma N} \sma X$ has the $\aone$-homotopy type of an $\aone$-contractible smooth scheme.
	\item If $X$ is given as the vanishing locus of a hypersurface defined by $f \in \cplx[x_1,\ldots,x_n]$, then for any integer $N \geq 2$ (or $1$ if $X$ is $\aone$-connected) and any $N$-tuple of integers $(a_1,\ldots,a_n)$ the hypersurfaces
	\[
	\sum_{i=1}^N u_i^{a_i}v_i = f
	\]
	are $\aone$-contractible.  
\end{enumerate} 
\end{thm}

\begin{proof}
For the first point, begin by observing that by appeal to  Theorem~\ref{thm:deformationspacesmodelsuspension} any suspension ${\pone}^{\sma s} \sma X$ admits a model as a smooth scheme.  Next, pointed, smooth $k$-schemes are $H\Z$-local by appeal to \cite[Lemma 4.1]{HKO} (this argument appeals to resolution of singularities, which is where the assumption on the characteristic of $k$ appears).  Then, arguing as in \cite[Theorem 4.2]{HKO}, one observes that $H\Z$-acyclicity guarantees that $X$ is $\pone$-stably $\aone$-contractible.  

Next, if either $N \geq 2$, or $N = 1$ and $X$ is $\aone$-connected, then ${\pone}^{\sma N} \sma X$ is at least $\aone$-$1$-connected.  By definition, ${\pone}^{\sma N} \sma X$ is at least $1$-effective for $N \geq 1$ in the sense that it lies in the subcategory of $\Spc_k$ generated under homotopy colimits by spaces of the form $\gm{} \sma U_+, U \in \Sm_k$.  In that case \cite[Theorem 1.3]{BachmannConservativity} guarantees that since ${\pone}^{\sma N} \sma X$ is $\pone$-stably contractible, it is already $\aone$-contractible.

For the second point, observe that topologically contractible smooth complex surfaces are $H\Z$-acyclic by \cite[Theorem 1]{AAcyclic}.  That ${\pone}^{\sma N} \sma X$ is $\aone$-contractible then follows from the first point and admits a smooth affine model by appeal to Corollary~\ref{cor:modelsofiteratedsuspension}(2).

For the third point, we proceed as follows.  Observe that if $f$ defines a smooth hypersurface in ${\mathbb A}^{r}$, then projecting onto $\aone$ with coordinate $u$ and arguing as in Proposition~\ref{prop:degenerationoverasubschemeexample} one concludes that the hypersurface $u^av = f$ is a model of the $\pone$-suspension of the hypersurface defined by $f$. The result then follows by a straightforward induction.
\end{proof}

\begin{rem}
	\label{rem:topologicalcontractibility}
	If $X$ is a topologically $\Z$-acyclic smooth complex surface, then $X$ is necessarily affine and if $X$ is not isomorphic to ${\mathbb A}^2_{\cplx}$ then $X$ necessarily has negative logarithmic Kodaira dimension \cite[Theorem 2.6]{Zaidenberg}.  On the other hand, $\aone$-connected smooth surfaces are log-uniruled by \cite[Theorem 4.7]{ChoudhuryRoy}, so they necessarily have negative Kodaira dimension \cite[Theorem 1.1]{KeelMcKernan}.  It follows that topologically $\Z$-acyclic smooth complex surfaces not isomorphic to ${\mathbb A}^2_{\cplx}$ are not $\aone$-connected.
\end{rem}

\begin{ex}
	\label{ex:generalizedtomdieckpetriesurfaces}
Assume $X$ is a topologically $\Z$-acyclic smooth complex surface.  If $X$ has logarithmic Kodaira dimension $1$, it was observed by Kaliman and Makar-Limanov \cite[\S 7 Theorem on p. 606]{KML} that $X$ can always be realized as a hypersurface.  Indeed, suppose $k,l,m$ are integers with $k,l \geq 2$, $m \geq 1$, $gcd(k,l) = 1$, and assume that $f,g \in \cplx[x]$ are polynomials such that $\deg f, \deg g < m$, $f(0) = g(0) = 1$, $f$ is arbitrary subject to the preceding condition and $g$ is uniquely determined by the condition that  
\[
p_{k,l,m,f}(x,y,z) = \frac{(z^mx + f(z))^k - (z^my+g(z))^l - z}{z^m}
\]
is a polynomial.  In that case, the variety $p_{k,l,m,f} = 0$ defines a topologically contractible hypersurface of logarithmic Kodaira dimension $1$, and every such variety is isomorphic to a hypersurface of this form.  In particular, Theorem~\ref{thm:aonecontractiblehypersurfaces} applies here, and we conclude that the hypersurface $X_{p,N}$ defined by
\[
\sum_{i=1}^N u_iv_i = p_{k,l,m,f}
\]
is $\aone$-contractible for $N \geq 2$; this observation can be viewed as an improvement of \cite[Corollary 3.7]{DPO}.  

In the special case where $N = 1$, Kaliman and Zaidenberg observed that the resulting hypersurfaces could fail to be isomorphic to affine space \cite[Theorem 1]{KaZaMiyanishi}.  We conclude from Remark~\ref{rem:topologicalcontractibility} that the hypersurfaces $p_{k,l,m,f} = 0$ are not $\aone$-connected and Theorem~\ref{thm:aonecontractiblehypersurfaces} does not guarantee the hypersurface $X_{p,1}$ is $\aone$-contractible.  
\end{ex}

{\begin{footnotesize}
\raggedright
\bibliographystyle{alpha}
\bibliography{HighlyConnected}
\end{footnotesize}}

\Addresses
\end{document}